\def\cxymatrix#1{\xy*[c]\xybox{\xymatrix#1}\endxy}
\theoremstyle{plain}
\newtheorem{theorem}{Theorem}[section]
\theoremstyle{definition}
\newtheorem{proposition}[theorem]{Proposition}
\newtheorem{lemma}[theorem]{Lemma}
\newtheorem{definition}[theorem]{Definition}
\newtheorem{remark}[theorem]{Remark}
\newtheorem{corollary}[theorem]{Corollary}
\numberwithin{equation}{section}
\newcommand{\Z}{\mathbb Z}
\newcommand{\Q}{\mathbb Q}
\newcommand{\C}{\mathbb C}
\renewcommand{\H}{\mathbb H}
\newcommand{\T}{\mathcal T}
\newcommand{\E}{\mathcal E}
\newcommand{\Matrix}[4]{\left(\begin{smallmatrix}#1&#2\\#3&#4\end{smallmatrix}\right)}
\newcommand{\Vector}[2]{\left(\begin{smallmatrix}#1\\#2\end{smallmatrix}\right)}
\renewcommand{\P}{\mathcal P}
\DeclareMathOperator{\PSL}{PSL}
\DeclareMathOperator{\SL}{SL}
\DeclareMathOperator{\Conj}{Conj}
\DeclareMathOperator{\red}{red}
\DeclareMathOperator{\TE}{T}
\def\Mtildehat{\scalerel*{\widehat{\widetilde M}}{\widetilde{M^2}}}
\def\Ttildebar{\scalerel*{\widetilde{\overline\T}}{\widetilde{T^2}}}
\author{Matthias Goerner}
\address{Pixar Animation Studios \\
         1200 Park Avenue\\
         Emeryville, CA 94608, USA \newline
         {\tt \url{http://www.unhyperbolic.org/}}}
\email{enischte@gmail.com}
\title[Triangulation independent Ptolemy varieties]{Triangulation independent Ptolemy varieties}
\author{Christian K. Zickert}
\address{University of Maryland \\
         Department of Mathematics \\
         College Park, MD 20742-4015, USA \newline
         {\tt \url{http://www2.math.umd.edu/~zickert}}}
\email{zickert@math.umd.edu}
\thanks{C.~Z.~was supported by DMS-13-09088. M.~G.~was partially supported by DMS-11-07452. \\
\newline
1991 {\em Mathematics Classification.} Primary 57N10, 57M27, 57M50. Secondary 13P10.
\newline
{\em Key words and phrases: Ptolemy coordinates, representation variety, character variety, $A$-polynomial.
}
}
\date{}
\begin{document}
\maketitle
\begin{abstract}
The Ptolemy variety for $\SL(2,\C)$ is an invariant of a topological ideal triangulation of a compact $3$-manifold $M$. It is closely related to Thurston's gluing equation variety. The Ptolemy variety maps naturally to the set of conjugacy classes of boundary-unipotent $\SL(2,\C)$-representations, but (like the gluing equation variety) it depends on the triangulation, and may miss several components of representations. In this paper, we define a Ptolemy variety, which is independent of the choice of triangulation, and detects all boundary-unipotent irreducible $\SL(2,\C)$-representations. We also define variants of the Ptolemy variety for $\PSL(2,\C)$-representations, and representations that are not necessarily boundary-unipotent. In particular, we obtain an algorithm to compute all irreducible $\SL(2,\C)$-characters as well as the full $A$-polynomial. All the varieties are topological invariants of $M$.
\end{abstract}
\section{Introduction}
The Ptolemy variety $P_n(\T)$ of a topological ideal triangulation $\T$ of a compact $3$-manifold $M$ was defined by Garoufalidis, Thurston and Zickert~\cite{GaroufalidisThurstonZickert}. It gives coordinates (called Ptolemy coordinates) for boundary-unipotent $\SL(n,\C)$-representations of $\pi_1(M)$ in the sense that each point in $P_n(\T)$ determines a representation (up to conjugation). The Ptolemy variety is explicitly computable for many census manifolds when $n=2$ or $3$ (see~\cite{CURVE,FalbelKoseleffRouillier} for a database), and invariants such as volume and Chern-Simons invariant can be explicitly computed from the Ptolemy coordinates. The Ptolemy variety, however, depends on the triangulation and may miss several components of representations. 

We focus exclusively on the case when $n=2$, so we omit the subscript $n$ on the Ptolemy variety. Our goal is to define a refined Ptolemy variety $\overline P(\T)$, which is a topological invariant of $M$ and is guaranteed to detect all irreducible boundary-unipotent $\SL(2,\C)$-representations. 
We also define refined variants of the Ptolemy variety for $\PSL(2,\C)$-representations~\cite{GaroufalidisThurstonZickert,PtolemyField}, and for the enhanced Ptolemy variety~\cite{ZickertEnhancedPtolemy} for $\SL(2,\C)$-representations that are not necessarily boundary-unipotent. The refined variant of the latter detects all irreducible $\SL(2,\C)$-representations. 

A representation in $\SL(2,\C)$ or $\PSL(2,\C)$ is \emph{boundary-unipotent}, respectively \emph{boundary-Borel}, if it takes each peripheral subgroup to a conjugate of the subgroup $P$, respectively $B$, where $B$ is the group of upper triangular matrices, and $P\subset B$ is the subgroup of matrices with $1$ on the diagonal.

\subsection{Motivation: Thurston's gluing equations} In his famous notes Thurston~\cite{ThurstonNotes} wrote down a system of polynomial equations (called~\emph{gluing equations}) for a compact 3-manifold $M$ with a topological ideal triangulation. The system consists of a variable (called a \emph{shape}) $z_i$ for each simplex, and an equation for each edge. The set of solutions with $z_i\in\C P^1\setminus\{0,1,\infty\}$ is called the \emph{gluing equation variety}, and each point in this variety determines (up to conjugation) a $\PSL(2,\C)$-representation of $\pi_1(M)$. Solutions with some $z_i$ being 0, 1 or $\infty$ are called \emph{degenerate}, and may not give rise to a unique representation, or to any representation at all. The gluing equation variety and the set of representations detected depend on the triangulation and may be empty.

In~\cite{Segerman} Segerman considers a generalization of Thurston's gluing equation variety, where the shapes $z_i$ are replaced by variables in the field $\C((\zeta))$ of Laurent series in a variable $\zeta$. He proves the existence of a triangulation such that his generalized variety detects all irreducible representations except those that have image in a generalized dihedral group. His approach uses normal surfaces and it does not yield explicit defining equations for the generalized variety. Our approach focuses on Ptolemy coordinates and is completely combinatorial, and detects all irreducible representations, even those with image in a generalized dihedral group. It is possible that our methods could also be used to make Segerman's generalized variety more explicit, but we shall not pursue this idea here. Our paper does not require prior knowledge of Thurston's gluing equations or Segerman's work.


\subsection{Decorated representations}
A convenient language for studying both the gluing equation variety and the Ptolemy variety is that of \emph{decorated representations} (see \cite{ZickertDuke,GaroufalidisThurstonZickert}). We give a brief overview assuming for simplicity that $M$ has a single torus boundary component (the general case and precise definitions are recalled in Section~\ref{sec:Decorations}). Let $G$ be either $\PSL(2,\C)$ or $\SL(2,\C)$. A \emph{decoration} of a representation $\rho\colon\pi_1(M)\to G$ may be thought of as a choice of a point in $\C P^1$ stabilized by $\rho(\pi_1(\partial M))$. In particular, a representation generically has two decorations\footnote{As an example, \texttt{m004(10,11)} and \texttt{m004(-10,-11)} in SnapPy give the same geometric representation but different shapes corresponding to different decorations.}, but may have only one (if $\rho$ is boundary-unipotent), infinitely many (if $\rho(\pi_1(\partial M))$ is trivial), or none at all (if $\rho$ is not boundary-Borel). Any decorated representation determines shapes satisfying Thurston's gluing equations.

Since $\C P^1=\SL(2,\C)\big/B$ we shall sometimes refer to a decoration as a \emph{B-decoration}. If $\rho$ is boundary-unipotent, a (stronger) decoration by points in $\SL(2,\C)\big/P$ exists which we refer to as a \emph{P-decoration}.

In the language of decorations, Thurston's gluing equation variety parametrizes \emph{generically $B$-decorated} boundary-Borel $\PSL(2,\C)$-representations, where a decoration is generic if the corresponding solution to Thurston's gluing equations is non-degenerate. Whether or not a representation has a generic decoration depends on the choice of triangulation.

\subsection{Ptolemy varieties in brief}\label{sec:PtolemyBrief}
The Ptolemy variety $P(\T)$ is given by a \emph{Ptolemy coordinate} for each edge and a \emph{Ptolemy relation} for each simplex, and consists of the set of solutions where each coordinate is nonzero (see Section~\ref{sec:PtDecCocReview} for a more detailed review). Just like the gluing equation variety it parametrizes decorated representations, and depends on the triangulation. More precisely, $P(\T)$ parametrizes generically $P$-decorated boundary-unipotent $\SL(2,\C)$-representations. There are three natural modifications of $P(\T)$ (see Section~\ref{sec:OtherVariants}):

\begin{itemize}
\item A Ptolemy variety $P^\sigma(\T)$, with $\sigma\in H^2(M,\partial M;\Z/2\Z)$, for boundary-unipotent $\PSL(2,\C)$-representations with obstruction class to lifting to a boundary-unipotent $\SL(2,\C)$-represen\-tation given by $\sigma$~\cite{GaroufalidisThurstonZickert,PtolemyField}.
\item An ``enhanced'' Ptolemy variety $\E P(\T)$ for $\SL(2,\C)$-representations that are not necessarily boundary-unipotent~\cite{ZickertEnhancedPtolemy}. 
\item A ``reduced'' Ptolemy variety $P(\T)_{\red}$ for $B$-decorations instead of $P$-decorations. It is the quotient of $P(\T)$ by a $(\C^*)^c$-action, where $c$ is the number of boundary components of $M$~\cite{GaroufalidisThurstonZickert,PtolemyField}.\end{itemize}

In this paper we construct another modification:

\begin{itemize}
\item A triangulation independent refinement $\overline P(\T)$.
\end{itemize}
The modifications can all be applied independently, e.g., one also has $P(\T)_{\red}^\sigma$ and $\E \overline P(\T)$, but we shall not consider enhanced Ptolemy varieties with obstruction classes here (see Remark~\ref{rm:EnhancedPSLExample} for a brief discussion).

\begin{remark}There is an explicit regular map from each (triangulation dependent) Ptolemy variety to the gluing equation variety, but we shall not need this here (see e.g.~\cite{GaroufalidisThurstonZickert,GaroufalidisGoernerZickert,ZickertEnhancedPtolemy}).
\end{remark}

\subsection{Statement of results} 
We briefly state our main results about the triangulation independent Ptolemy varieties. A summary of their definition is given in Section~\ref{sec:Algorithm}, and explicit computations are given in Section~\ref{sec:Examples}. Let $M$ be a compact, orientable $3$-manifold with non-empty boundary, and let $\T$ be a topological ideal triangulation of $M$.

Any $P$-decorated boundary-unipotent representation $\rho\colon\pi_1(M)\to\SL(2,\C)$ assigns Ptolemy coordinates to the edges of $\T$ (see Section~\ref{sec:PtDecCocReview}). If all Ptolemy coordinates are zero, we say that the decorated representation is \emph{totally degenerate}. This notion turns out to be independent of the (ideal) triangulation and implies that the representation is reducible (see Section~\ref{sec:TotallyDegenerate}).

Recall that although the set of representations of $\pi_1(M)$ is a variety, the set of representations up to conjugation is not a variety in general. The same holds for decorated representations. However, if we discard the totally degenerate decorations, we get a variety:

\begin{theorem}\label{thm:MainResult}
The conjugation action on the set of non totally degenerate decorated boundary-unipotent representations has a geometric quotient. For each triangulation $\T$ of $M$, the invariant Ptolemy variety $\overline P(\T)$ provides explicit coordinates, i.e.~we have an isomorphism of varieties
\begin{equation}\label{eq:MainResult}
\overline P(\T)\cong\left\{\txt{Not totally degenerate, $P$-decorated, \\boundary-unipotent $\pi_1(M)\to\SL(2,\C)$}\right\}\big/\Conj.
\end{equation} 
In particular, $\overline P(\T)$ is independent of the triangulation. Morever, $P(\T)$ embeds in $\overline P(\T)$ as a Zariski open subset. \qed
\end{theorem}

\begin{definition}A representation is \emph{boundary-nondegenerate} if its restriction to $\pi_1(\partial_i M)$ is non-trivial for each boundary component $\partial_i M$ of $M$. 
\end{definition}

Theorems~\ref{thm:OneToOneOverNonDegenerate}, \ref{thm:MainThmPSL} and \ref{thm:MainThmEnhanced} below are elementary corollaries of Theorem~\ref{thm:MainResult} and its analogues for the modified variants. They were previously known for representations admitting a \emph{generic} decoration (see~\cite{PtolemyField,GaroufalidisThurstonZickert,ZickertEnhancedPtolemy}). 

\begin{theorem}\label{thm:OneToOneOverNonDegenerate}
The map
\begin{equation}\label{eq:RedOneToOne}
\overline P(\T)_{\red}\to\left\{\txt{Boundary-unipotent\\$\pi_1(M)\to\SL(2,\C)$}\right\}\big/\Conj
\end{equation}
induced from~\eqref{eq:MainResult} by ignoring the decoration has image containing all irreducible representations and is one-to-one over the set of irreducible boundary-nondegenerate representations.\qed
\end{theorem}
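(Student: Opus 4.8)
The plan is to work entirely with the description of $\overline P(\T)$ furnished by Theorem~\ref{thm:MainResult}, so that the statement becomes a purely representation-theoretic assertion about decorations and the triangulation plays no further role. Recall (Section~\ref{sec:PtDecCocReview}) that a decoration of a boundary-unipotent $\rho\colon\pi_1(M)\to\SL(2,\C)$ amounts, for each boundary component $\partial_iM$, to a coset $g_iN\in\SL(2,\C)/N$ (with $N$ the upper-triangular unipotent subgroup) satisfying $g_i^{-1}\rho(\pi_1(\partial_iM))g_i\subseteq N$, i.e.\ the choice of a flag in $\SL(2,\C)/B$ fixed by $\rho(\pi_1(\partial_iM))$ together with a lift to $\SL(2,\C)/N$; under the isomorphism~\eqref{eq:MainResult} the $(\C^*)^c$-action is the rescaling $g_iN\mapsto g_i\diag(t_i,t_i^{-1})N$ of these lifts, one $t_i$ per boundary component. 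The map~\eqref{eq:RedOneToOne} sends the class of $(\rho,D)$ to the class of $\rho$; this is $(\C^*)^c$-invariant, so it descends to $\overline P(\T)_{\red}$, and its image equals the image of the analogous map out of $\overline P(\T)$ (since $\overline P(\T)\to\overline P(\T)_{\red}$ is onto).

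First I would dispose of the statement about the image. Let $\rho$ be an arbitrary irreducible boundary-unipotent representation. Since $\rho$ is boundary-unipotent, $\rho(\pi_1(\partial_iM))$ fixes at least one flag for each $i$, so $\rho$ admits a decoration $D$. Moreover $(\rho,D)$ cannot be totally degenerate: if it were, the developing map~\eqref{eq:DevelopingMap} would be constant with value a $\rho$-invariant point of $\SL(2,\C)/B$, contradicting irreducibility (this is exactly the implication ``totally degenerate $\Rightarrow$ reducible'' recorded above). Hence $[(\rho,D)]$ is a point of $\overline P(\T)$ lying over $[\rho]$, so $[\rho]$ lies in the image of~\eqref{eq:RedOneToOne}.

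Next I would prove one-to-one-ness over an irreducible boundary-nondegenerate $\rho$. Any point of $\overline P(\T)$ over $[\rho]$ is represented by a pair $(\rho',D')$ with $\rho'$ conjugate to $\rho$; conjugating, we may assume $\rho'=\rho$, so every such point is $[(\rho,D)]$ for a decoration $D$ of $\rho$ (each non totally degenerate, by the previous paragraph). It then remains to show that the decorations of $\rho$ form a single $(\C^*)^c$-orbit. Boundary-nondegeneracy means $\rho(\pi_1(\partial_iM))$ is a nontrivial unipotent subgroup, hence has a unique fixed flag in $\SL(2,\C)/B$; the decorations at $\partial_iM$ are the lifts of this flag to $\SL(2,\C)/N$, i.e.\ a torsor under the fiber $B/N\cong\C^*$ of $\SL(2,\C)/N\to\SL(2,\C)/B$, acted on simply transitively by the $i$-th rescaling factor. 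Taking the product over the $c$ boundary components shows the set of decorations of $\rho$ is one $(\C^*)^c$-orbit; hence the preimage of $[\rho]$ in $\overline P(\T)$ is one orbit and collapses to a single point of $\overline P(\T)_{\red}$, as claimed.

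The hard part will not be this soft argument but the preliminaries it rests on: verifying that $\rho$-equivariant decorations of $\rho$ correspond to independent choices at the individual boundary components (using that the ideal vertices of the universal cover form the disjoint union of the coset spaces $\pi_1(M)/\pi_1(\partial_iM)$), carrying out the per-cusp analysis of which cosets $g_iN$ are admissible, and pinning down that the $(\C^*)^c$-action defined in Section~\ref{sec:Invariant} really is cusp-wise rescaling of decorations under~\eqref{eq:MainResult} rather than merely abstractly isomorphic to it. With those identifications in hand, and with Theorem~\ref{thm:MainResult} supplying the equivalence between $\overline P(\T)$ and decorated representations, both assertions follow as above.
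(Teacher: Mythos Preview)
Your proposal is correct and follows essentially the same route as the paper: the paper's proof is the one-line ``follows from Corollary~\ref{cor:Freedom}'', and your argument is precisely the unpacking of that corollary (nontrivial unipotent image $\Rightarrow$ unique fixed $B$-coset, with $P$-lifts forming a $\C^*$-torsor at each cusp) together with Proposition~\ref{prop:TotallyDegenerate} for the image statement. The preliminaries you flag in your last paragraph are exactly Section~\ref{sec:Freedom} and Section~\ref{sec:DiagonalAction} of the paper, so nothing additional is needed.
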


\begin{theorem}\label{thm:MainThmPSL}
Let $k=\big\vert H^1(M,\partial M;\Z/2\Z)\big\vert$. The map
\begin{equation}\label{eq:RedkToOne}
\overline P^\sigma(\T)_{\red}\to\left\{\txt{Boundary-unipotent\\$\pi_1(M)\to\PSL(2,\C)$\\with obstruction class $\sigma$}\right\}\big/\Conj
\end{equation}
has image containing all irreducible representations and is $k:1$ over the set of irreducible, boundary-nondegenerate representations. \qed
\end{theorem}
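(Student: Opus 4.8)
The plan is to reduce Theorem~\ref{thm:MainThmPSL} to Theorem~\ref{thm:OneToOneOverNonDegenerate} by comparing the $\SL(2,\C)$ and $\PSL(2,\C)$ pictures fiber by fiber. First I would set up the obstruction-class bookkeeping: a boundary-unipotent $\PSL(2,\C)$-representation $\rho$ with obstruction class $\sigma\in H^2(\widehat M;\Z/2\Z)$ has, by definition of $\sigma$, a set of boundary-unipotent ``$\sigma$-twisted lifts'' to $\SL(2,\C)$ (i.e.\ cocycles for the twisted problem), and the cohomology group $H^1(\widehat M;\Z/2\Z)$ acts simply transitively on this set — this is the standard torsor statement and is exactly why the constant $k=|H^1(\widehat M;\Z/2\Z)|$ appears. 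Correspondingly, the twisted Ptolemy variety $\overline P^\sigma(\T)$ is built from the same combinatorial data as $\overline P(\T)$ but with sign modifications dictated by a $2$-cocycle representing $\sigma$, so that the construction of Theorem~\ref{thm:MainResult} goes through verbatim to give an isomorphism of $\overline P^\sigma(\T)$ with the variety of decorated, non totally degenerate, $\sigma$-twisted boundary-unipotent representations modulo conjugation. I would carry this comparison out explicitly once, observing that twisting by $\sigma$ never affects the genericity/degeneracy notions (these are properties of the developing map's image in $\SL(2,\C)/B$, insensitive to central signs).

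Next I would run the torus-quotient argument of Theorem~\ref{thm:OneToOneOverNonDegenerate} in the twisted setting. The $(\C^*)^c$-action on $\overline P^\sigma(\T)$ rescaling decorations along boundary components is defined identically, the quotient $\overline P^\sigma(\T)_{\red}$ is again a variety, and the induced map~\eqref{eq:RedkToOne} is the composite ``forget decoration, then project $\SL(2,\C)\to\PSL(2,\C)$.'' Surjectivity onto irreducibles: every irreducible $\PSL(2,\C)$-representation $\rho$ with obstruction class $\sigma$ admits at least one $\sigma$-twisted boundary-unipotent $\SL(2,\C)$-lift $\widetilde\rho$ (by definition of $\sigma$), which is again irreducible and hence — by Proposition~\ref{prop:TotallyDegenerate} and the argument behind Theorem~\ref{thm:OneToOneOverNonDegenerate} — admits a decoration that is not totally degenerate, giving a point of $\overline P^\sigma(\T)_{\red}$ mapping to $\rho$. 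For the fiber count over an irreducible boundary-nondegenerate $\rho$: by Theorem~\ref{thm:OneToOneOverNonDegenerate} applied in the twisted setting, $\overline P^\sigma(\T)_{\red}$ is in bijection with the set of $\sigma$-twisted $\SL(2,\C)$-lifts of boundary-unipotent $\PSL(2,\C)$-representations (after the reduced quotient, exactly one decoration class per such lift, using irreducibility and boundary-nondegeneracy to pin down the $(\C^*)^c$-orbit); and the fiber of ``project to $\PSL(2,\C)$'' over $\rho$ is precisely the set of $\sigma$-twisted lifts, which is an $H^1(\widehat M;\Z/2\Z)$-torsor and hence has size $k$. One must also check that distinct lifts remain non-conjugate in $\SL(2,\C)$ and map to distinct points of $\overline P^\sigma(\T)_{\red}$ — this follows since an irreducible representation has central stabilizer, so two lifts are conjugate iff they differ by an element of the center acting trivially, i.e.\ iff they are equal.

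The main obstacle I anticipate is the torsor/conjugacy bookkeeping in the last step: one must verify that the $H^1(\widehat M;\Z/2\Z)$-action on lifts descends correctly to an action on $\overline P^\sigma(\T)_{\red}$ that is compatible with the $(\C^*)^c$-quotient, and in particular that no two of the $k$ lifts accidentally become conjugate or get identified in the reduced quotient. The delicate point is that multiplying $\widetilde\rho$ by a sign character $\epsilon\in H^1(\widehat M;\Z/2\Z)$ changes the Ptolemy coordinates by signs that are generically \emph{not} realizable by the $(\C^*)^c$-action (since that action rescales by arbitrary nonzero scalars attached to boundary components, whereas $\epsilon$ is pulled back from $\widehat M$ and can be nontrivial on interior homology); boundary-nondegeneracy is exactly the hypothesis that rules out the degenerate coincidences. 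Everything else — the scheme-theoretic reformulation, the verification that the twisted construction is a topological invariant, the reducible-representation remarks — is a routine transcription of the untwisted arguments with $\sigma$-signs inserted, and does not require new ideas.
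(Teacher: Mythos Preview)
Your approach is essentially correct and lands in the same place as the paper, but the route is somewhat different and carries a framing that risks confusion.

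The paper's argument is more direct: it works entirely on the $\PSL(2,\C)$ side. One establishes the analogue of Theorem~\ref{thm:OneToOneGeneral} for $\PSL(2,\C)$ (decorated $\PSL$-representations $\leftrightarrow$ $\PSL$ Bruhat cocycles), then shows that $P^\sigma(\T,E)$ maps to $\PSL$ Bruhat cocycles with fiber $Z^1(\widehat M;\Z/2\Z)$ (two Ptolemy assignments give the same $\PSL$ cocycle iff they differ by a $\Z/2\Z$-valued $1$-cocycle). The coboundaries $B^1(\widehat M;\Z/2\Z)$ sit inside the diagonal $(\C^*)^c$-action as the $2$-torsion at each vertex, so passing to $\overline P^\sigma(\T)_{\red}$ collapses exactly $B^1$, leaving an $H^1(\widehat M;\Z/2\Z)$-fiber over each $\PSL$ decoration. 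The $\PSL$ analogue of Corollary~\ref{cor:Freedom} (unique $B$-decoration for boundary-nondegenerate) then finishes the count.

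Your reframing through ``$\sigma$-twisted $\SL(2,\C)$-lifts'' is a legitimate way to think about points of $P^\sigma(\T)$, but you should be careful: these are not representations, so invoking Theorem~\ref{thm:OneToOneOverNonDegenerate} ``in the twisted setting'' really means reproving it directly for $\PSL$, which is what the paper does. More concretely, the place where your bookkeeping gets delicate is exactly where the paper's argument is cleanest: the set of such ``lifts'' over a fixed $\PSL$ decoration is a $Z^1$-torsor, not an $H^1$-torsor, and it is the passage to the reduced variety that cuts this down to $H^1$. Your final paragraph gets this right (noting that $\epsilon\in H^1$ is not realizable by the diagonal action while coboundaries are), but the earlier statement that $H^1$ ``acts simply transitively on the set of lifts'' is only true after you have already quotiented by something equivalent to $B^1$. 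If you tighten that one point, your argument and the paper's coincide.
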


\begin{theorem}\label{thm:MainThmEnhanced}
Suppose $M$ has $c$ boundary components, all of which are tori. There is a map
\begin{equation}\label{eq:RedTwoToOne}
\E \overline P(\T)_{\red}\to\left\{\txt{Boundary-Borel\\$\pi_1(M)\to\SL(2,\C)$}\right\}\big/\Conj
\end{equation}
with image containing all irreducible representations. It is generically $2^c:1$ over the irreducible, boundary-nondegenerate representations. 
Moreover, the projection to the $(m_s,l_s)$ coordinates is the variety of eigenvalues of $\mu_s$ and $\lambda_s$.\qed
\end{theorem}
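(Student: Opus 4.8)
The plan is to follow the template of the proofs of Theorems~\ref{thm:MainResult} and~\ref{thm:OneToOneOverNonDegenerate}, replacing the unipotent subgroup by a Borel and carrying the eigenvalue data along. The first and main step is to prove the enhanced analogue of Theorem~\ref{thm:MainResult}: that $\E\overline P(\T)$ is isomorphic, as a variety, to the moduli space of non totally degenerate \emph{enhanced decorated} boundary-Borel representations modulo conjugation. Here an enhanced decoration of $\rho$ is a $\rho$-equivariant assignment of a flag in $\SL(2,\C)/B\cong\mathbb{CP}^1$ to each ideal point of $\widetilde M$; since $\pi_1(\partial_s M)\cong\Z^2$ is abelian, every representation of $\pi_1(M)$ is boundary-Borel, and the flag at an ideal point is necessarily a common eigenflag of the stabilizing peripheral subgroup. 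The Ptolemy edge coordinates are defined from lifts of the flags exactly as in the boundary-unipotent case, and the enhanced coordinates $m_s$, $l_s$ record the scalar by which $\rho(\mu_s)$, $\rho(\lambda_s)$ act on the chosen decorating line. The construction mirrors Section~\ref{sec:Invariant} essentially verbatim once $N$ is replaced by $B$; the virtual edges of Definition~\ref{def:VirtualEdges} supply precisely the coordinates needed so that degenerate simplices --- including those at cusps whose peripheral holonomy fixes only one or two flags --- are still recorded, so the right-hand side is a genuine variety and every such decorated representation appears, independently of $\T$.

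Granting this, the torus $(\C^*)^c$ acts on $\E\overline P(\T)$ by rescaling the decorating lift independently at each boundary component; on coordinates this scales the Ptolemy coordinates and fixes each $(m_s,l_s)$. The quotient $\E\overline P(\T)_{\red}$ together with the forgetful map of the previous step yields the map~\eqref{eq:RedTwoToOne}. Any irreducible $\rho$ admits a non totally degenerate enhanced decoration: pick a common eigenflag of the abelian peripheral holonomy at each cusp and extend equivariantly over $\widetilde M$; irreducibility forces the developing map to be non-constant, so the decoration is not totally degenerate. Hence the image contains all irreducible representations. Moreover, by construction the coordinate $m_s$ (resp.\ $l_s$) of a point is the eigenvalue of $\rho(\mu_s)$ (resp.\ $\rho(\lambda_s)$) on the decorating line, so --- every representation being boundary-Borel --- the image of $\E\overline P(\T)_{\red}$ under the projection to the $(m_s,l_s)$ coordinates is exactly the set of simultaneous eigenvalues over common eigenlines, i.e.\ the variety of eigenvalues of the $\mu_s$ and $\lambda_s$ (the totally degenerate, hence boundary-degenerate, representations contribute only limit points).

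For the fiber count, fix an irreducible, boundary-nondegenerate $\rho$. Its centralizer is $\{\pm I\}$, which acts trivially on flags and on Ptolemy coordinates, so the fiber of~\eqref{eq:RedTwoToOne} over $[\rho]$ is the set of enhanced decorations of $\rho$ modulo $(\C^*)^c$. Such a decoration is determined, up to the torus action, by one decorating flag over each cusp $s$, and this flag must be a common eigenflag of the nontrivial abelian group $\rho(\pi_1(\partial_s M))$; generically (namely when, at every cusp, this group is diagonalizable and not contained in $\{\pm I\}$) there are exactly two such flags per cusp, the two choices giving the reciprocal eigenvalue pairs $(m_s,l_s)$ and $(m_s^{-1},l_s^{-1})$. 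Multiplying over the $c$ cusps shows the map is generically $2^c:1$, as claimed; on the non-generic locus (some cusp parabolic, or peripheral image contained in $\{\pm I\}$) the fiber is smaller, resp.\ larger, which is consistent with the word ``generically''.

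The main obstacle is the first step: transplanting the invariant Ptolemy construction of Section~\ref{sec:Invariant} to the enhanced, non-boundary-unipotent setting while keeping $\E\overline P(\T)$ a variety and an isomorphism of the type~\eqref{eq:MainResult} intact. In particular one must check that the enhanced coordinates $m_s,l_s$ are compatible with the virtual-edge relations of Definition~\ref{def:VirtualEdges}, that simplices at cusps with nontrivial (parabolic or loxodromic) peripheral holonomy are still faithfully recorded, and that no spurious components are created. Once this framework and the abelianness of the peripheral subgroups are in place, the remaining steps --- surjectivity onto irreducibles, the eigenvalue-variety identification, and the generic $2^c:1$ count --- are essentially formal.
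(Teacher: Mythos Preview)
Your proposal is essentially the paper's own argument: the paper deduces Theorem~\ref{thm:MainThmEnhanced} from the enhanced one-to-one correspondence (Theorems~\ref{thm:OneOneMildEnhanced}, \ref{thm:OneOneModerate}, \ref{thm:OneOneWild}) together with Corollary~\ref{cor:Freedom}, which is exactly your ``first step plus fiber count'' outline.

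One point of care: your description of an enhanced decoration as a $\rho$-equivariant assignment of $B$-cosets to ideal points of $\widetilde M$, with $m_s,l_s$ recording the eigenvalues, is not quite the paper's setup. In the non-boundary-unipotent case one cannot assign $P$-cosets $\rho$-equivariantly to ideal points (a diagonal peripheral element moves a $P$-coset), which is why the paper instead defines a $P$-decoration as a $\rho$-equivariant assignment of $P$-cosets to the \emph{triangular faces} of $\Ttildebar$ (Section~\ref{sec:Enhanced}), and encodes the failure of equivariance across face pairings in the modified identification relations involving $m_s,l_s$. Your $B$-coset-plus-eigenvalue picture is morally equivalent after quotienting by the diagonal action, but to make the Ptolemy coordinates and the edge relations~\eqref{eq:EdgeEquationsEnhanced} well defined you need the triangular-face formulation (or, equivalently, a choice of fundamental rectangle as in Figure~\ref{fig:FundRectIdentifications}). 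Once that is in place, your surjectivity-onto-irreducibles argument and your $2^c$ count via common eigenflags are precisely Proposition~\ref{prop:TotallyDegenerate} and Corollary~\ref{cor:Freedom}.
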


This can be used to compute the $A$-polynomial; see Section~\ref{sec:Apolyexample} for an example.

\begin{remark}
The image of~\eqref{eq:RedOneToOne}, \eqref{eq:RedkToOne}, and \eqref{eq:RedTwoToOne} may contain reducible representations as well (see e.g.~the example in Section~\ref{sub:Reducible}), but such are necessarily boundary-degenerate (see Proposition~\ref{prop:TotallyDegenerate}). The preimage over a representation which is not boundary-nondegenerate is typically (although not always; see Remark~\ref{rm:Freedom}) higher dimensional.
\end{remark}

\begin{remark}
One could detect all representations (including all reducible ones) by performing a single barycentric subdivision~\cite{GaroufalidisThurstonZickert}. The resulting variety, however, is not a topological invariant since each non-ideal vertex (generically) increases the dimension by one. The growth in the number of simplices and the expected dimension also makes computations infeasible. More importantly, invariants such as the loop invariant~\cite{DG}, important in quantum topology, are only defined when all vertices are ideal.
\end{remark}

\begin{remark}
Our varieties are really schemes over $\Z$. They are typically not irreducible, and the coordinate rings may have nilpotents.
\end{remark}

\subsection{Definition and computation of the refined Ptolemy varieties - an overview}\label{sec:Algorithm}
We consider only $\overline P(\T)$ here. The modified variants are defined and computed in a similar way.
As mentioned earlier, a decorated representation defines Ptolemy coordinates, and thus gives rise to a distinguished set of edges, namely those having Ptolemy coordinate zero. The idea is to consider all possible such sets. It turns out that they have to satisfy a transitivity property.

\begin{definition} A \emph{transitive edge set} is a set $E$ of edges of $\T$ satisfying that if any two edges of a face are in $E$, so is the third. The edges in a transitive edge set are called \emph{zero-edges}. We sometimes denote $E$ by $(\T,E)$ to stress the significance of the triangulation.
\end{definition}

There are always at least two transitive edge sets: the \emph{non-degenerate} edge set with no zero-edges, and the \emph{totally degenerate} edge set where all edges are zero-edges. In Section~\ref{sec:InvariantPtolemy} we define for all transitive edge sets $E$ (except the totally degenerate one) a Ptolemy variety $P(\T,E)$, parametrizing decorated representations whose Ptolemy coordinates are zero exactly at the edges in $E$.
The triangulation independent Ptolemy variety $\overline P(\T)$ is then defined as the union of all the $P(\T,E)$ (see Definition~\ref{def:InvariantPtolemy}). For the non-degenerate edge set $P(\T,E)$ is simply $P(\T)$. If each face has at most one zero-edge (the \emph{edge-degenerate} case; see Definition~\ref{def:PartitionType}), $P(\T,E)$ is defined by the usual Ptolemy relations together with an \emph{edge relation}~\eqref{eq:EdgeRelations} for each zero-edge.
If one or more faces consists entirely of zero-edges, one can reduce to the edge-degenerate case by considering an auxilary triangulation $\T'$ obtained from $\T$ by performing an explicit sequence of 2-3 moves.

\subsubsection{Algorithm}
We describe an algorithm that is guaranteed to find all boundary-unipotent irreducible representations of a manifold, no matter what triangulation $\T_0$ is used as input. The steps involve triangulations obtained from $\T_0$ by performing 2-3 moves, but these are purely auxiliary.

A simplex is \emph{degenerate} if all six edges are zero. A face is \emph{degenerate} if all 3 edges are zero.

\begin{itemize}[leftmargin=0.8in, rightmargin=0.2in]
\item[Step 0:] Compute all transitive edge sets $(\T_0,E_0)$ ignoring the totally degenerate one.
\item[Step 1:] If $(\T,E)$ has a degenerate simplex, replace it by a descendant (Definition~\ref{def:Descendant}) $(\T',E')$ after performing a 2-3 move at a face adjacent to a non-degenerate and a degenerate simplex. This procedure reduces the number of degenerate simplices by one. Repeat until all simplices are non-degenerate.
\item[Step 2:] If $(\T,E)$ has a degenerate face, replace it by the set of descendants $(\T',E')$ where $\T'$ is the triangulation obtained from $\T$ by performing a 2-3 move on each degenerate face. No descendants have degenerate faces.
\item[Step 3:] For each $(\T,E)$ from step 2 compute the primary decomposition of the reduced Ptolemy variety $P(\T,E)_{\red}$.
\item[Step 4:]  For each zero dimensional component of $P(\T,E)_{\red}$, compute the corresponding boundary-unipotent $\SL(2,\C)$-representations via the Bruhat cocycle (Section~\ref{sec:BruhatCocycle}). 
The output is a matrix of exact algebraic expressions for each generator of $\pi_1(M)$ in the face pairing presentation.
\end{itemize}

\begin{remark} If a Ptolemy variety contains a higher dimensional component $C$, one can compute the tautological representation $\pi_1(M)\to \SL(2,F(C))$, where $F(C)$ is the function field of $C$. See Section~\ref{sub:TautologicalRep} for an example.
\end{remark}

\begin{remark} Among the 61911 manifolds in the SnapPy census \texttt{OrientableCuspedCensus} there is no manifold with more than 53 transitive edge sets (average 16). Most of these edge sets are only edge-degenerate (72\%), so steps 1 and 2 above are rarely needed.
\end{remark}

\begin{remark} In case there are only two transitive edge sets it follows immediately from the definition that $\overline P(\T)=P(\T)$, so in this case the standard Ptolemy variety is guaranteed to detect all boundary-unipotent representations. Unfortunately, the only manifolds in the SnapPy census \texttt{OrientableCuspedCensus} with only two transitive edge sets are: \texttt{m003}, \texttt{m004}, \texttt{m015}, \texttt{m016}, \texttt{m017}, \texttt{m019}, \texttt{m118}, \texttt{m119}, \texttt{m180}, \texttt{m185}.
\end{remark}


\section{Ptolemy coordinates, decorations and cocycles}\label{sec:PtDecCocReview}
This section summarizes results of \cite{GaroufalidisThurstonZickert} and~\cite{GaroufalidisGoernerZickert}; see also~\cite{PtolemyField} for a review of the case when $n=2$ with many worked out examples.

Let $M$ be a compact, orientable $3$-manifold with non-empty boundary and let $\widetilde M$ be the universal cover of $M$. Let $\widehat M$ and $\Mtildehat$ denote the spaces obtained from $M$ and $\widetilde M$, respectively, by collapsing each boundary component to a point. Given a topologically ideal triangulation $\T$ of $M$ we refer to the cells as \emph{vertices}, \emph{edges}, \emph{faces} and \emph{simplices} of $\T$. 

\subsection{Ptolemy assignments}

Fix a (topological ideal) triangulation $\T$ of $M$. An \emph{ordered simplex} is a simplex together with an ordering of its vertices.
\begin{definition}
A \emph{Ptolemy assignment} on an ordered simplex $\Delta$ is an assignment of a nonzero complex number $c_{ij}$ to each oriented edge $\varepsilon_{ij}$ of $\Delta$ satisfying the \emph{Ptolemy relation}
\begin{equation}\label{eq:PtolemyRelation}
c_{03}c_{12}+c_{01}c_{23}=c_{02}c_{13},
\end{equation}
and the \emph{edge orientation relations} $c_{ji}=-c_{ij}$.
\end{definition}

\begin{definition}
A \emph{Ptolemy assignment} on $\T$ is a Ptolemy assignment on each simplex $\Delta_k$ such that the Ptolemy coordinates $c_{ij,k}$ satisfy the \emph{identification relations}
\begin{equation}
c_{ij,k}=c_{i'j',k'},\qquad\text{when}\quad \varepsilon_{ij,k}\sim\varepsilon_{i'j',k'} 
\end{equation}
where $\sim$ denotes identification of oriented edges.
\end{definition}

\begin{definition}\label{def:PtolemyVariety} The~\emph{Ptolemy variety} $P(\T)$ is the Zariski open subset where $c_{ij,k}\neq 0$ of the zero set of the ideal in $\Q[\{c_{ij,k}\}]$ generated by the Ptolemy relations, the identification relations, and the edge orientation relations. As a set it is equal to the set of Ptolemy assignments on $\T$.
\end{definition}

\begin{remark}  The purpose of the edge orientation relations is to make the Ptolemy variety ``order agnostic'', i.e.~independent of the choice of vertex orderings. Whenever convenient we shall eliminate the edge orientation relations and only consider $c_{ij,k}$ with $i<j$. By choosing once and for all representatives for each (oriented) edge of $\T$, the Ptolemy variety can be given by a variable for each edge and a relation for each simplex.
\end{remark}

\subsection{Decorations}\label{sec:Decorations}
Let $B\subset\SL(2,\C)$ denote the subgroup of upper triangular matrices, and $P$ the subgroup of upper triangular matrices with $1$'s on the diagonal. 
Let $G=\SL(2,\C)$ and let $H\subset G$ denote either $P$ or $B$.

\begin{definition}
A $(G,H)$-representation is a representation $\pi_1(M)\to G$ taking each peripheral subgroup $\pi_1(\partial_i M)$ to a conjugate of $H$. Such are called \emph{boundary-unipotent} for $H=P$ and \emph{boundary-Borel} for $H=B$. 
\end{definition}

\begin{definition} A $(G,H)$-representation is \emph{boundary-nondegenerate} if its restriction to $\pi_1(\partial M_i)$ is
non-trivial for each boundary component $\partial_i M$ of $M$.
\end{definition}

Let $I(\widetilde M)$ denote the set of ideal vertices of $\widetilde M$, i.e.~vertices of $\Mtildehat$ with the triangulation induced by $\T$. Note that each ideal point corresponds to a boundary component of $\widetilde M$, so $I(\widetilde M)$ is independent of $\T$.

\begin{definition}
A \emph{decoration} of a simplex $\Delta$ is an assignment of a coset $g_iH$ to each vertex $v_i$ of $\Delta$. A decoration is thus a tuple $(g_0H,g_1H,g_2H,g_3H)$, and we consider two decorations to be equal if the tuples differ by multiplication by an element in $\SL(2,\C)$. A decoration is \emph{generic} if the cosets are distinct as $B$-cosets (distinct if $H=B$).
\end{definition}

\begin{definition}\label{def:Decoration}
Let $\rho$ be a $(G,H)$-representation. A \emph{decoration} of $\rho$ is a $\rho$-equivariant map 
\begin{equation}\label{eq:DevelopingMap}
D\colon I(\widetilde M)\to G/H,
\end{equation}
i.e.~an equivariant assignment of $H$-cosets to the ideal points of $\widetilde M$. When the representation plays no role, we shall refer to a decorated representation simply as a decoration. Since $gD$ is a decoration of $g\rho g^{-1}$ if $D$ is a decoration of $\rho$, we consider two decorations to be equal if they differ by left multiplication by an element in $G$.  
\end{definition}

\begin{remark} A boundary-unipotent representation is also boundary-Borel. We shall refer to decorations as $P$-decorations or $B$-decorations depending on context. 
\end{remark}

\begin{definition}
A decoration is \emph{generic} if the induced decoration of each simplex of $\T$ is generic. 
\end{definition}

\begin{remark} Every $(G,H)$-representation has a decoration, but whether a decoration is generic depends on $\T$.
\end{remark}

\begin{remark} A $B$-coset determines a point in $\partial\overline\H^3=\C\cup\{\infty\}$, the boundary of hyperbolic $3$-space, via $gB\leftrightarrow g\infty$. A decoration of a boundary-Borel representation thus determines a developing map (see~e.g.~Zickert~\cite{ZickertDuke}) assigning ideal simplex shapes to the simplices of $\T$. A decoration is generic if and only if all shapes are non-degenerate. We shall not need this here.
\end{remark}

\subsubsection{Freedom in the choice of decoration}\label{sec:Freedom}
If we choose points $e_i\in I(\widetilde M)$, one for each boundary component $\partial_i M$ of $M$, a decoration is uniquely determined by the cosets $D(e_i)$. The freedom in the choice of $D(e_i)$ is determined by the image of the boundary components.
\begin{proposition}\label{prop:Freedom} 
Let $\rho$ be a $(G,B)$-representation and let $e_i$ be as above.
\begin{enumerate}[label=(\roman*)]
\item If $\rho(\pi_1(\partial_i M))$ is trivial, $D(e_i)$ may be chosen arbitrarily.
\item If $\rho(\pi_1(\partial_i M))$ is non-trivial and unipotent, $D(e_i)$ is uniquely determined by $\rho$.
\item If $\rho(\pi_1(\partial_i M))$ is non-trivial and diagonalizable, $D(e_i)$ is determined by $\rho$ up to a $\Z/2\Z$-action.\qed
\end{enumerate} 
\end{proposition}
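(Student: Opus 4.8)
The plan is to analyze the freedom in choosing $D(e_i)$ by tracing through the equivariance condition. A decoration $D\colon I(\widetilde M)\to G/B$ is $\rho$-equivariant, meaning $D(\gamma\cdot x)=\rho(\gamma)D(x)$ for all $\gamma\in\pi_1(M)$ and $x\in I(\widetilde M)$. Fixing one lift $e_i$ in each orbit, the orbit of $e_i$ under $\pi_1(M)$ corresponds to the coset space $\pi_1(M)/\pi_1(\partial_i M)$ (identifying $\pi_1(\partial_i M)$ with the stabilizer $\Stab(e_i)$). Therefore $D$ is determined on the orbit of $e_i$ by the single value $D(e_i)$, and the only constraint is well-definedness: for every $\gamma\in\Stab(e_i)=\pi_1(\partial_i M)$ we need $\rho(\gamma)D(e_i)=D(\gamma\cdot e_i)=D(e_i)$, i.e.\ $D(e_i)$ must be a point of $G/B$ fixed by the whole peripheral subgroup $\rho(\pi_1(\partial_i M))$. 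Conversely any such fixed coset extends to an equivariant $D$ on that orbit, and the orbits are disjoint, so the total freedom is exactly $\prod_i (\text{fixed-point set of }\rho(\pi_1(\partial_i M))\text{ in }G/B)$.

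So the proof reduces to a fixed-point computation on $G/B\cong\C\cup\{\infty\}=\partial\overline{\H}^3$ for the group generated by the image of the peripheral subgroup, which is abelian (a quotient of $\Z^2$ for torus boundary, or more generally abelian since $\pi_1$ of a surface boundary component maps in — but the key point is we only need its image). I would split into the three cases according to the structure of this image subgroup $\Gamma_i:=\rho(\pi_1(\partial_i M))\subset\SL(2,\C)$. Case (i): $\Gamma_i$ trivial — then every coset is fixed, so $D(e_i)\in G/B$ is arbitrary. Case (ii): $\Gamma_i$ nontrivial unipotent — a nontrivial unipotent element of $\SL(2,\C)$ is conjugate to $\Matrix{1}{1}{0}{1}$ and has a unique fixed point on $\C\cup\{\infty\}$ (namely $\infty$ in that normalization); since $\Gamma_i$ is abelian and all its nontrivial elements are unipotent with this same fixed point (commuting unipotents share their fixed point), the common fixed-point set is a single point, so $D(e_i)$ is uniquely determined. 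Case (iii): $\Gamma_i$ nontrivial and diagonalizable (containing a nontrivial diagonalizable, hence loxodromic-or-elliptic, element $\Matrix{\lambda}{0}{0}{\lambda^{-1}}$ with $\lambda\neq\pm1$) — such an element has exactly two fixed points $\{0,\infty\}$ on $\C\cup\{\infty\}$; any element of $\Gamma_i$ commuting with it must permute $\{0,\infty\}$, and since $\Gamma_i$ is abelian and $\SL(2,\C)$-diagonalizable elements that commute are simultaneously diagonalizable, every element of $\Gamma_i$ fixes both points. Hence the common fixed set is exactly $\{0,\infty\}$, a two-element set on which the normalizer element $\Matrix{0}{1}{-1}{0}$ acts by swapping — this is the $\Z/2\Z$-action, and $D(e_i)$ is determined up to it.

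I should be a little careful about one subtlety: in case (iii) I want "nontrivial and diagonalizable" to mean that the image is conjugate into the diagonal subgroup but is \emph{not} unipotent — i.e.\ it genuinely contains an element with distinct eigenvalues. (If the boundary is a torus and $\rho$ is boundary-Borel, the image lies in a Borel, hence is either unipotent, trivial, or diagonalizable in this sense; these three mutually exclusive cases cover everything, which is why the proposition is stated this way.) The one remaining point to nail down is that in cases (ii) and (iii), commuting elements really do share fixed points: for unipotents this is the statement that the centralizer of a nontrivial unipotent in $\SL(2,\C)$ is $\{\pm\Matrix{1}{t}{0}{1}\}$, all fixing $\infty$; for a diagonalizable element with distinct eigenvalues, its centralizer is the diagonal torus together with the single swap element — but abelianness of $\Gamma_i$ forbids the swap (the swap does not commute with a generic diagonal element), so $\Gamma_i$ is contained in the torus. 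These centralizer computations are the only real content; everything else is the orbit-counting bookkeeping above. I do not expect any genuine obstacle here — the main thing to get right is the clean reduction from "freedom in the equivariant map" to "common fixed points of the peripheral image," after which the three cases are short linear algebra.
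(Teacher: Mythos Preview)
The paper gives no proof of this proposition: the \qed\ appended to the statement signals that the authors regard it as elementary and omit the argument. Your proof is correct and is precisely the standard one---reduce via $\rho$-equivariance to computing the fixed-point set of $\rho(\pi_1(\partial_i M))$ on $G/B\cong\partial\overline{\H}^3$, then do the three-case linear-algebra analysis.

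One small cleanup: your parenthetical about abelianness of the peripheral image is slightly muddled (for higher-genus boundary $\pi_1(\partial_i M)$ is not abelian), but this is harmless. In case~(ii) the hypothesis that the image is unipotent already places it inside $P\cong\C$, and in case~(iii) ``diagonalizable'' places it inside the diagonal torus; both are abelian, so your centralizer and fixed-point computations go through without needing abelianness of $\pi_1(\partial_i M)$ itself. You also correctly observe that the trichotomy is exhaustive only when the image is abelian (e.g.\ torus boundary), which is the setting the paper cares about; the proposition as stated simply does not address the mixed case of a non-abelian peripheral image in $B$.
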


\begin{corollary}\label{cor:Freedom}
A boundary-nondegenerate $(G,P)$-representation has a unique $B$-decoration. A boundary-nondegenerate $(G,B)$-representation generically has $2^c$ decorations, where $c$ is the number of boundary components.
\end{corollary}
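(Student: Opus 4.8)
The plan is to deduce Corollary~\ref{cor:Freedom} directly from Proposition~\ref{prop:Freedom}, using the fact (established in Section~\ref{sec:Freedom}) that a decoration is uniquely determined by its values $D(e_i)$ on a chosen set of representatives $e_i\in I(\widetilde M)$, one per boundary component. So the total freedom in choosing a decoration is the product, over the $c$ boundary components, of the freedom in choosing each $D(e_i)$, and Proposition~\ref{prop:Freedom} records exactly that component-by-component freedom.

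For the first assertion, let $\rho$ be a boundary-nondegenerate $(G,P)$-representation. Boundary-nondegeneracy means $\rho(\pi_1(\partial_i M))$ is non-trivial for every $i$, and since $\rho$ is boundary-unipotent, $\rho(\pi_1(\partial_i M))$ is a non-trivial group of unipotent matrices. Hence for each $i$ we are in case (ii) of Proposition~\ref{prop:Freedom}, so $D(e_i)$ is uniquely determined; as there is no freedom at any boundary component, the $B$-decoration is unique. (One should note the decoration here is a $B$-decoration even though $\rho$ is a $P$-representation, per the remark preceding the definition of genericity for decorations; the point is that a unipotent subgroup fixes a unique point of $\partial\overline{\H}^3$, hence a unique $B$-coset.)

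For the second assertion, let $\rho$ be a boundary-nondegenerate $(G,B)$-representation. Now $\rho(\pi_1(\partial_i M))$ is a non-trivial subgroup of a conjugate of $B$. Generically — i.e.\ away from the locus where some peripheral image is non-trivial but unipotent, or more degenerate — each $\rho(\pi_1(\partial_i M))$ is non-trivial and diagonalizable, so case (iii) of Proposition~\ref{prop:Freedom} applies and $D(e_i)$ is determined up to a $\Z/2\Z$-action (the two choices being the two fixed points on $\partial\overline{\H}^3$ of the diagonalizable peripheral image). Multiplying the freedoms over the $c$ boundary components gives $2^c$ decorations. The only subtlety — and the step I would be most careful about — is pinning down the precise meaning of ``generically'': one must exclude boundary components where $\rho(\pi_1(\partial_i M))$ is non-trivial unipotent (case (ii), contributing a factor $1$ rather than $2$) and there is nothing to exclude on the trivial side since boundary-nondegeneracy already rules that out. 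So ``generically'' here means ``when every peripheral image is non-trivial and diagonalizable,'' which is a Zariski-open condition on the representation variety, and on that locus the count is exactly $2^c$.
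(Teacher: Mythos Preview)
Your argument is correct and is exactly the intended one: the paper states Corollary~\ref{cor:Freedom} as an immediate consequence of Proposition~\ref{prop:Freedom} (no separate proof is given), and you have spelled out precisely that deduction, including the correct identification of which case of Proposition~\ref{prop:Freedom} applies at each boundary component. Your careful unpacking of ``generically'' is a welcome addition; you might also note, in light of Remark~\ref{rm:Freedom}, that the $2^c$ choices of $D(e_i)$ could in principle collapse under the left $G$-action on decorations, but generically they do not.
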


\begin{remark}\label{rm:Freedom}
Different choices of $D(e_i)$ may give rise to equal decorations, i.e.~decorations differing only by left multiplication by an element in $G$. Hence, even when a boundary-component is collapsed, there may be only finitely many decorations (see e.g.~the example in Section~\ref{sub:Reducible}).
\end{remark}

\subsection{Natural cocycles}\label{sec:NaturalCocycle} The triangulation $\T$ of $M$ induces a decomposition $\overline \T$ of $M$ by truncated simplices. 

\begin{definition}\label{def:NaturalCocycle}
A \emph{natural cocycle} on a truncated simplex $\overline\Delta$ is a labeling of the oriented edges by elements in $\SL(2,\C)$ such that
\begin{enumerate}[label=(\roman*)]
\item\label{itm:1} The product around each face (triangular and hexagonal) is $I$, the identity matrix.
\item\label{itm:3} Short edges are labeled by elements $\beta_{ij}^k$ in $P$.
\item\label{itm:4} Long edges are labeled by counter diagonal elements $\alpha_{ij}$.
\item\label{itm:2} Flipping the orientation replaces a labeling by its inverse, i.e.~$\alpha_{ij}\alpha_{ji}=I=\beta_{ij}^k\beta_{ji}^k$.
\end{enumerate}
The indexing is such that $\alpha_{ij}$ is the labeling of the long edge from vertex $i$ to $j$, and $\beta_{ij}^k$ is the labeling of the short edge near vertex $k$ parallel to the edge from $i$ to $j$; see Figure~\ref{fig:FundamentalCorrespondence}.
\end{definition}
\begin{definition}
A natural cocycle on $\overline \T$ is a natural cocycle on each truncated simplex such that the labelings of identified edges agree.
\end{definition}

\begin{remark} Decorations, $(G,H)$-representations, and natural cocycles are also defined for $G=\PSL(2,\C)$. The analogue of Corollary~\ref{cor:Freedom} holds as well.
\end{remark}

\subsection{The diagonal action}\label{sec:DiagonalAction}
If $c$ is the number of boundary components and $T\subset G$ is the subgroup of diagonal matrices, the torus $T^c$ acts on $P$-decorations, Ptolemy assignments and natural cocycles. This action is called the~\emph{diagonal action} and is illustrated in Figure~\ref{fig:DiagonalAction}. 
\begin{definition} The quotient of $P(\T)$ by the diagonal action is called the \emph{reduced Ptolemy variety} $P(\T)_{\red}$. 
\end{definition}

\begin{figure}[htb]
\scalebox{0.68}{\input{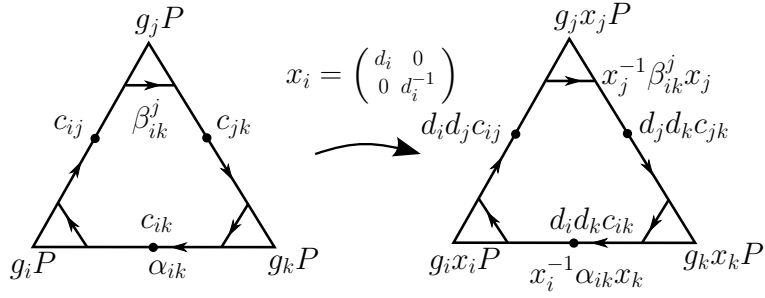}}
\caption{The diagonal action on decorations, Ptolemy assignments and natural cocycles.}\label{fig:DiagonalAction}
\end{figure}


\subsection{The fundamental correspondence}
The following result is proved in~\cite{GaroufalidisThurstonZickert} for $\SL(n,\C)$. For $n=2$ the correspondence is particularly simple, and is illustrated in Figure~\ref{fig:FundamentalCorrespondence}.
\begin{theorem} We have a one-to-one-correspondence
\begin{equation}\label{eq:FundamentalCorrespondence}
\left\{\text{Generic $P$-decorations}\right\}\overset{\text{1:1}}{\longleftrightarrow}P(\T)\overset{\text{1:1}}{\longleftrightarrow}\left\{\text{Natural cocycles on $\overline\T$}\right\}
\end{equation}
respecting the diagonal action.\qed
\end{theorem}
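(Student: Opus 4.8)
The plan is to establish both bijections one simplex at a time and then verify that the resulting local bijections are compatible with the identification relations of $\T$ and with the diagonal action. A point of $P(\T)$, a generic $P$-decoration, and a natural cocycle on $\overline\T$ are each given by local data on the simplices of $M$ (respectively $\widetilde M$) subject to matching conditions on identified edges, so it suffices to produce, for a single ordered simplex $\Delta$ with truncation $\overline\Delta$, an $\SL(2,\C)$-equivariant bijection between generic $P$-decorations of $\Delta$, Ptolemy assignments on $\Delta$, and natural cocycles on $\overline\Delta$, and then to check (i) that these bijections send identification relations to identification relations and (ii) that they intertwine the $T^c$-action of Figure~\ref{fig:DiagonalAction}.

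For the bijection between decorations and Ptolemy assignments, the key point is that $P$ is exactly the stabilizer of $e_1=\Vector{1}{0}$, so $gP\mapsto ge_1$ identifies $G/P$ with $\C^2\setminus\{0\}$. To a decoration $(g_0P,\dots,g_3P)$ of $\Delta$ I assign $c_{ij}=\det(w_i,w_j)$, the $2\times 2$ determinant with columns $w_i=g_ie_1$ and $w_j=g_je_1$; this is independent of the choice of coset representatives and, since $\det$ is multiplicative and $\SL(2,\C)$ has determinant $1$, of the residual global $\SL(2,\C)$. The edge orientation relations $c_{ji}=-c_{ij}$ are antisymmetry of the determinant, the Ptolemy relation~\eqref{eq:PtolemyRelation} is the classical three-term (Plücker) identity $\det(w_0,w_3)\det(w_1,w_2)+\det(w_0,w_1)\det(w_2,w_3)=\det(w_0,w_2)\det(w_1,w_3)$ for four vectors in the two-dimensional space $\C^2$, and a decoration is generic (its cosets are pairwise distinct as $B$-cosets) exactly when the $w_i$ are pairwise linearly independent, i.e.\ when all $c_{ij}\neq 0$. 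For the inverse, given nonzero $c_{ij}$ satisfying~\eqref{eq:PtolemyRelation} I normalize the $\SL(2,\C)$-freedom by taking $w_0=\Vector{1}{0}$ and $w_1=\Vector{0}{c_{01}}$ (using transitivity of $\SL(2,\C)$ on $\C^2\setminus\{0\}$ and $c_{01}\neq0$, which also pins the frame uniquely); the remaining determinant conditions then force $w_2=\Vector{-c_{12}/c_{01}}{c_{02}}$ and $w_3=\Vector{-c_{13}/c_{01}}{c_{03}}$, and the single leftover condition $\det(w_2,w_3)=c_{23}$ is precisely~\eqref{eq:PtolemyRelation}. Since this reconstruction is canonical up to $\SL(2,\C)$, it is a two-sided inverse, and the bijection manifestly respects the equivalence that two decorations agree when they differ by left multiplication by $\SL(2,\C)$.

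For the bijection with natural cocycles, to a Ptolemy assignment I associate the cocycle whose long-edge labels $\alpha_{ij}$ are the counter-diagonal matrices with entries $\pm c_{ij}^{\pm1}$ and whose short-edge labels $\beta^k_{ij}\in P$ have off-diagonal entry the appropriate Laurent monomial in the Ptolemy coordinates of the $2$-face containing that short edge; equivalently these matrices can be read off the frames $g_i$ reconstructed above, so that $\alpha_{ij}$ transports the frame at the truncation triangle of vertex $i$ to that of vertex $j$ and $\beta^k_{ij}$ transports within the truncation triangle of vertex $k$ (see Figure~\ref{fig:FundamentalCorrespondence}). Properties~\ref{itm:2}--\ref{itm:4} of Definition~\ref{def:NaturalCocycle} are immediate from the shape of these matrices — here one uses $c_{ij}\neq0$ so that $\alpha_{ij}$ is genuinely counter-diagonal — the triangular-face instance of condition~\ref{itm:1} becomes an additive relation among three $\beta$-entries that reduces to~\eqref{eq:PtolemyRelation}, and the hexagonal-face instance of condition~\ref{itm:1} is a product of six matrices whose triviality is again equivalent to~\eqref{eq:PtolemyRelation}. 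Running this in reverse, a natural cocycle on $\overline\Delta$ has counter-diagonal long edges and unipotent short edges, and its face relations determine all of the $c_{ij}$ up to the common rescaling that is exactly the $\SL(2,\C)$-equivalence; this gives the second bijection.

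The passage from a single simplex to all of $M$, together with the gluing and diagonal-action checks, is where the real content lies and is the step I expect to be the main obstacle. Because $c_{ij}$ is defined $\SL(2,\C)$-invariantly and the reconstruction is canonical up to $\SL(2,\C)$, a $\rho$-equivariant developing map~\eqref{eq:DevelopingMap} restricts on each truncated simplex of $\widetilde{\overline\T}$ to a generic decoration whose Ptolemy coordinates — hence whose natural cocycle labels — automatically agree across identified edges and descend to $\T$. Conversely, from a Ptolemy assignment (equivalently a natural cocycle) on $\T$ one fixes a base truncated simplex in $\widetilde{\overline\T}$ and propagates the local frames across faces using the lifted edge labels; one must check this is well defined, i.e.\ that the products of edge labels around every edge of $\T$, and then around every loop in $M$, are trivial, which is precisely condition~\ref{itm:1} together with the identification relations — the resulting equivariant frame assignment is then the developing map and its holonomy is $\rho$. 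The delicate points are this global consistency — equivalently, that the simplexwise reconstructed decorations actually glue rather than merely existing locally — and keeping the genericity condition aligned across the three models; once these are in place, the diagonal-action statement is the routine verification that rescaling $D(e_i)$ by $\diag(d_i,d_i^{-1})$ multiplies each $c_{ij}$ by $d_id_j$ and conjugates the cocycle labels as drawn in Figure~\ref{fig:DiagonalAction}.
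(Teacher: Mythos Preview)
The paper does not actually prove this theorem: it is stated with a \qed and attributed to \cite{GaroufalidisThurstonZickert}, with the $n=2$ case merely illustrated in Figure~\ref{fig:FundamentalCorrespondence}. Your proposal is a correct and careful unpacking of exactly the correspondence drawn in that figure---the identification $G/P\cong\C^2\setminus\{0\}$, the Pl\"ucker identity giving the Ptolemy relation, the explicit formulas $\alpha_{ij}=q(c_{ij})$ and $\beta^k_{ij}=x(c_{ij}/c_{ik}c_{kj})$, and the simplexwise reconstruction followed by a gluing check---so it is essentially the same approach the paper is gesturing at, just written out in full.
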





\begin{figure}[htb]
\scalebox{0.68}{\input{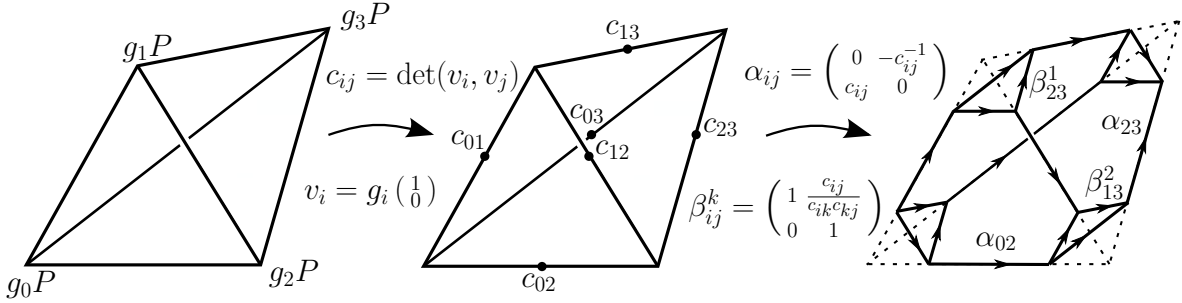}}
\caption{The fundamental correspondence.}\label{fig:FundamentalCorrespondence}
\end{figure}

\begin{remark}\label{rm:NonGenericDecoration}
A decoration $(g_0P,\dots,g_3P)$ is generic if and only if $c_{ij}\neq0$, where $c_{ij}$ are defined as in Figure~\ref{fig:FundamentalCorrespondence}. Even if a decoration is not generic, the Ptolemy relation is still satisfied.
\end{remark}



\section{Other variants of the Ptolemy variety}\label{sec:OtherVariants}
Section~\ref{sec:PSLPtolemy} summarizes results in~\cite{GaroufalidisThurstonZickert} (see also~\cite{PtolemyField}). Section~\ref{sec:Enhanced} summarizes results in \cite{ZickertEnhancedPtolemy}. 

\subsection{Ptolemy coordinates for $\PSL(2,\C)$-representations}\label{sec:PSLPtolemy}

A manifold may have boundary-unipotent representations in $\PSL(2,\C)$ that do not lift to boundary-unipotent representations in $\SL(2,\C)$.
For example, a geometric representation of a complete one-cusped hyperbolic manifold has no boundary-unipotent lifts (any lift of a longitude has trace $-2$, not $2$~\cite{Calegari}). The obstruction to existence of a boundary-unipotent lift is a class $\sigma\in H^2(\widehat M;\Z/2\Z)$. For each such class there is a Ptolemy variety $P^\sigma(\T)$.

Let $\sigma\in C^2(\widehat M;\Z/2\Z)$ be a cellular cocycle representing a class in $H^2(\widehat M;\Z/2\Z)$. Although $\sigma$ may not be a coboundary, its restriction $\sigma_k$ to each simplex $\Delta_k$ is a coboundary. Fix $\eta_k\in C^1(\Delta_k;\Z/2\Z)$ such that $\delta(\eta_k)=\sigma_k$. We identify $\Z/2\Z$ with $\{\pm 1\}$.
\begin{definition}
The Ptolemy variety $P^\sigma(\T)$ is the variety generated by the Ptolemy relations, the edge orientation relations, and the \emph{modified edge identification relations}
\begin{equation}\label{eq:EdgeIdentificationsSigma}
c_{ij,k}=(\eta_{ij,k}\eta_{i'j',k'})c_{i'j',k'},\qquad \text{ if } \varepsilon_{ij,k}=\varepsilon_{i'j',k'}.
\end{equation}
Here $\eta_{ij,k}$ denotes the value of $\eta_k$ on the $ij$-edge of $\Delta_k$.
\end{definition}
Up to canonical isomorphism this is independent of the choices of $\eta_k$, and only depends on the class of $\sigma$ in $H^2(\widehat M;\Z/2\Z)$.
\begin{remark}
In~\cite{PtolemyField}, the Ptolemy relations are modified, but not the identification relations. One easily checks that the two definitions agreee. 
\end{remark}
The analogue of the fundamental correspondence~\eqref{eq:FundamentalCorrespondence} is
\begin{equation}\label{eq:FundamentalCorrespondencePSL}
\cxymatrix{{
\left\{\txt{Generically decorated\\
$(\PSL(2,\C),P)$-representations\\with obstruction class $\sigma$}\right\}\ar@/_1pc/@{<->}[rr]_-{1:1}&P^\sigma(\T)\ar[l]_-{1:z}\ar[r]^-{z:1}&\left\{\txt{Natural $\PSL(2,\C)$-cocycles on $\overline\T$\\with obstruction class $\sigma$}\right\},}}
\end{equation}
where $z$ is the order of $Z^1(\widehat M;\Z/2\Z)$, the group of $\Z/2\Z$-valued $1$-cocycles. The maps are defined as in~\eqref{eq:FundamentalCorrespondence}, i.e.,~given by Figure~\ref{fig:FundamentalCorrespondence}; see~\cite[(9.31)]{GaroufalidisThurstonZickert} and~\cite[(1-4)]{PtolemyField}.

\begin{remark}
A coboundary in $Z^1(\widehat M;\Z/2\Z)$ acts trivially on the reduced Ptolemy variety, so the map from $P^\sigma(\T)_{\red}$ to the set of $B$-decorations is $k:1$, where $k=\big\vert H^1(\widehat M;\Z/2\Z)\big\vert$.
\end{remark}

\subsection{Non-boundary-unipotent representations}\label{sec:Enhanced}
Assume that each boundary component $\partial_s M$ is a torus, and that we have fixed a meridian $\mu_s$ and a longitude $\lambda_s$ in $H_1(\partial_s M)$. The definition of $\E P(\T)$ involves a choice of \emph{fundamental rectangle} $R_s$ for each boundary component $\partial_s M$, such that the triangulation induced on the torus obtained by identifying the sides of $R_s$ agrees with the triangulation of $\partial_s M$ induced by $\T$. It is given by the usual variables $c_{ij,k}$ as well as additional variables $m_s,l_s$ indexed by the boundary components. 
\begin{definition}\label{def:EnhancedPtolemy}
The \emph{enhanced Ptolemy variety} $\E P(\T)$ is the Zarisky open subset, where all $c_{ij,k}$ and all $m_s,l_s$ are nonzero, of the zero set of the ideal in $\Q[{\{c_{ij,k}\}}\cup\{m_s,l_s\}]$ generated by the Ptolemy relations, the edge orientation relations together with \emph{modified identification relations} of the form $c_{ij,k}=pc_{i'j',k'}$, where $p$ is a monomial in the $m_s$ and $l_s$.
\end{definition}
We refer to~\cite{ZickertEnhancedPtolemy} for the precise definition of the modified identification relations. They are illustrated in Figure~\ref{fig:FundRectIdentifications} in the case where there is a single boundary torus, and where the sides $\mu'$ and $\lambda'$ of the fundamental rectangle agree with $\mu$ and $\lambda$ (if they don't agree perform the appropriate coordinate change). Here $i_j$ denotes the triangle near vertex $i$ of simplex $j$.

\begin{figure}[htb]
\hspace{-1cm}
\begin{minipage}[b]{0.55\textwidth}
\scalebox{0.75}{\input{figures_gen/FundRectIdentifications.tex}}
\end{minipage}
\hspace{1cm}
\begin{minipage}[b]{0.25\textwidth}
\scalebox{0.92}{\input{figures_gen/EPCocycle.tex}}
\end{minipage}\\
\begin{minipage}[t]{0.55\textwidth}
\caption{The identification relations for a face pairing $\alpha$.}\label{fig:FundRectIdentifications}
\end{minipage}
\begin{minipage}[t]{0.42\textwidth}
\caption{Face pairing edges and the fundamental correspondence.}\label{fig:EPCocycle}
\end{minipage}
\end{figure}

A representation which is not boundary-unipotent does not have a $P$-decoration as defined in Section~\ref{sec:Decorations}. We shall therefore need a slightly modified definition. The definition below appears to depend on the triangulation, but one easily checks that it only depends on a choice of base point in each boundary component.
\begin{definition} Let $\rho\colon\pi_1(M)\to\SL(2,\C)$ be boundary-Borel. A \emph{$P$-decoration} of $\rho$ is a $\rho$-equivariant assignment of $P$-cosets to each triangular face of $\widetilde{\overline\T}$.
\end{definition} 

In order to state the analogue of the fundamental correspondence~\eqref{eq:FundamentalCorrespondence} we need to consider another decomposition of $M$ called the \emph{fattened decomposition}. This decomposition is obtained from the decomposition by truncated simplices by replacing each hexagonal face by a hexagonal prism. Its edges consist of the usual long and short edges together with \emph{face pairing edges} joining two corresponding vertices on each side of a prism (see Figure~\ref{fig:EPCocycle}). We refer to~\cite[Sec.~4.3.1]{ZickertEnhancedPtolemy} for details. 
\begin{definition} A \emph{fattened natural cocycle} is a cocycle on the fattened decomposition restricting to a natural cocycle on each truncated simplex and labeling face pairing edges by diagonal elements.
\end{definition}

The analogue of the fundamental correspondence is
\begin{equation}\label{eq:FundamentalCorrespondenceGeneral}
\left\{\text{Generic $P$-decorations}\right\}\overset{\text{1:1}}{\longleftrightarrow}\E P(\T)\overset{\text{1:1}}{\longleftrightarrow}\left\{\text{Fattened natural cocycles on $\overline\T$}\right\}.
\end{equation}
The maps are illustrated in Figure~\ref{fig:EPCocycle} and described in detail in~\cite[Sec.~4.3]{ZickertEnhancedPtolemy}.
\begin{remark} The projection of $\E P(\T)$ onto the $m_s,l_s$ coordinates is the variety of eigenvalues of $\mu_s$ and $\lambda_s$ under $\SL(2,\C)$-representations. In particular, if $M$ has a single torus boundary component, the one-dimensional components give rise to factors of the $A$-polynomial.
\end{remark}


\section{Generalization of the fundamental correspondence}
The fundamental correspondence between \emph{generic} decorations, Ptolemy assignments, and natural cocycles plays an important role. The Ptolemy variety gives explicit coordinates enabling concrete computations, the decorations establish the link to group homology allowing for explicit computation of the Cheeger-Chern-Simons class~\cite{GaroufalidisThurstonZickert}, and the natural cocycles allow us to explicitly recover a representation from its coordinates.

As mentioned in Remark~\ref{rm:NonGenericDecoration} a non-generic decoration still determines a Ptolemy assignment (where some coordinates are allowed to be zero) via the map in Figure~\ref{fig:FundamentalCorrespondence}, but this map is neither injective nor surjective. For example, if three Ptolemy coordinates on a face are zero, the Ptolemy relation becomes $0=0$, and we can not recover the decoration. Also, a Ptolemy assignment where all but one of the Ptolemy coordinates are 0, can never arise from a decoration. 
In this section we generalize the one-to-one correspondence between decorations and natural cocycles. The definition of the invariant Ptolemy variety is carried out in Sections~\ref{sec:InvariantPtolemy} and~\ref{sec:Invariant}.

\subsection{Cocycles, and the action by coboundaries}
Let $G=\SL(2,\C)$. Recall that the triangulation $\T$ induces a decomposition $\overline\T$ of $M$ by truncated simplices. 

\begin{definition}
A \emph{$G$-cocycle} on $M$ is a labeling of the oriented edges of $M$ satisfying \ref{itm:1} and \ref{itm:2} of Definition~\ref{def:NaturalCocycle}. A $G$-cocycle satisfying \ref{itm:3} as well is called a $(G,P)$-cocycle.
\end{definition}
A $G$-cocycle $\tau$ determines (up to conjugation) a representation $\pi_1(M)\to G$ by taking products along edge paths. Note that if $\tau$ is a $(G,P)$-cocycle this representation is boundary-unipotent and is canonically decorated. Hence, a $(G,P)$-cocycle determines a decoration.
\begin{definition}
If $D$ is a decoration and $\tau$ a $(G,P)$-cocycle, we say that $\tau$ is compatible with $D$ if the decoration determined by $\tau$ equals $D$.
\end{definition}

Let $V(\overline\T)$ denote the set of vertices of $\overline\T$. Given an oriented edge $e$ of $\overline\T$, let $e_0,e_1\in V(\overline\T)$, denote the starting and ending vertex of $e$, respectively.

\begin{definition} A \emph{zero-cochain} is a map $\eta\colon V(\overline\T)\to G$. The \emph{coboundary} of a zero-cochain $\eta$ is the $G$-cocycle labeling an oriented edge $e$ by $\eta(e_0)^{-1}\eta(e_1)$.
\end{definition}


\begin{definition} The \emph{coboundary action} of a \emph{$P$-valued} zero-cochain $\eta\colon V(\overline\T)\to P$ on a $(G,P)$-cocycle $\tau$ replaces $\tau$ by $\eta\tau$, the cocycle defined by
\begin{equation}
\eta\tau(e)=\eta(e_0)^{-1}\tau(e)\eta(e_1).
\end{equation}
\end{definition}
Note that the coboundary action does not change the decorated representation.

\subsection{Bruhat cocycles}\label{sec:BruhatCocycle}
\begin{definition} A \emph{Bruhat cocycle} is a natural cocycle as in Definition~\ref{def:NaturalCocycle}, but where long edges may be either diagonal or counter-diagonal. A zero-cochain $\eta\colon V(\overline\T)\to P$ \emph{preserves} a Bruhat cocycle $\tau$ if $\eta\tau$ is again a Bruhat cocycle.
\end{definition}
The motivation for our definition is the following corollary of the Bruhat decomposition theorem.
\begin{lemma}\label{lemma:Bruhat} Let $gP$ and $hP$ be cosets. If $gB\neq hB$ there are unique coset representatives $gx_0$ and $hx_1$ such that $(gx_0)^{-1}hx_1$ is counterdiagonal. If $gB=hB$ there are (not unique) coset representatives such that $(gx_0)^{-1}hx_1$ is diagonal.
\end{lemma}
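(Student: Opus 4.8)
The plan is to invoke the Bruhat decomposition $\SL(2,\C) = B \sqcup B\,w\,B$, where $w = \left(\begin{smallmatrix}0&-1\\1&0\end{smallmatrix}\right)$ represents the nontrivial Weyl group element, and to translate the two cases of the lemma into the two cells of this decomposition. First I would normalize: since everything is invariant under left multiplication by a single element of $G$, I may assume $gP = P$, so $g$ itself may be taken in $B$; then $hP$ is an arbitrary coset and the condition $gB \neq hB$ becomes $hB \neq B$, i.e.~$h \notin B$. We must produce $x_0, x_1 \in P$ with $(gx_0)^{-1} h x_1$ counterdiagonal in the first case and diagonal in the second.

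For the generic case ($hB \neq B$): by Bruhat, $h \in B w B$, so write $h = b_1 w b_2$ with $b_1, b_2 \in B$. Decompose each $b_i = t_i p_i$ with $t_i \in T$ diagonal and $p_i \in P$ (unipotent), using $B = TP = PT$ as needed. The key computation is that conjugating $P$ by $w$ lands in the \emph{lower} triangular unipotent group, so $w p_1 w^{-1}$ is lower unipotent, which lets us absorb the $p_1$ factor: I would rearrange $h = b_1 w b_2$ so that on the left only a $P$-factor survives modulo the coset $P = gP$ (i.e.~push the diagonal part of $b_1$ through $w$, turning it into its inverse, and combine), and on the right a $P$-factor $x_1$ can be chosen to cancel $b_2$'s unipotent part. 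What remains between the chosen representatives $gx_0$ and $hx_1$ is $x_0^{-1}(\text{diagonal})\,w\,(\text{stuff that is now }P)$; a final adjustment of $x_0 \in P$ and $x_1 \in P$ kills the residual unipotent, leaving a diagonal times $w$, which is exactly a counterdiagonal matrix. Uniqueness follows because the $BwB$ cell is a free $U^- \times U^+$-orbit structure (equivalently, $P\,d\,w\,P$ for varying choices determines $d$ up to nothing once we demand the off-diagonal entries of the two representatives be zero): if $(gx_0)^{-1}hx_1$ and $(gx_0')^{-1}hx_1'$ are both counterdiagonal, then $x_0 (x_0')^{-1}$ on one side and $x_1 (x_1')^{-1}$ on the other side conjugate one counterdiagonal matrix to another, and a direct $2\times 2$ computation forces both $P$-elements to be trivial.

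For the degenerate case ($gB = hB$): now $h \in B$ after our normalization, write $h = t p$ with $t \in T$, $p \in P$; taking $x_1 = p^{-1} \in P$ and $x_0 = I$ gives $(gx_0)^{-1}h x_1 = t$, which is diagonal. Non-uniqueness is visible here because any $x_0 \in P$ can be compensated by replacing $t$ by $x_0^{-1} t (\text{adjusted } x_1)$ — more precisely $P$ acts nontrivially on the diagonal representatives — so we do not (and cannot) claim uniqueness in this branch, matching the statement.

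I expect the main obstacle to be the bookkeeping in the generic case: tracking how the diagonal and unipotent pieces of the two Borel factors in $h = b_1 w b_2$ interact with the fixed normalization $gP = P$, and in particular verifying that the required $x_0, x_1$ genuinely lie in $P$ (not merely in $B$) and that the leftover is \emph{purely} counterdiagonal with no surviving diagonal ambiguity. This is where the asymmetry $w P w^{-1} = U^-$ does the real work, and where a careless argument would accidentally prove only a weaker ``$B$-representatives'' statement. Everything else — the degenerate case and the uniqueness computation — reduces to an explicit $2\times 2$ matrix manipulation.
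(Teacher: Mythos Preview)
Your proposal is correct and follows exactly the same approach as the paper: invoke the Bruhat decomposition $G = B \sqcup BwB$ and read off the two cases as an elementary consequence. The paper's own proof is in fact terser than yours---it states the decomposition and declares the rest elementary---so your detailed bookkeeping with $B = PT$ and the explicit uniqueness check simply fills in what the paper leaves to the reader.
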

\begin{proof}
By Bruhat decomposition, every $g\in G$ can be decomposed as $b_1wb_2$, where $w$ is either $\Matrix{0}{-1}{1}{0}$ or $\Matrix{1}{0}{0}{1}$ and $b_i\in B$. The result is an elementary consequence.
\end{proof}

\begin{corollary}\label{cor:LocalOneToOne} For every decoration $D=(g_0P,g_1P,g_2P,g_3P)$ of a simplex $\Delta$ there exists a Bruhat cocycle $\tau$ on the corresponding truncated simplex $\overline\Delta$ compatible with $D$. Moreover $\tau$ is unique up to coboundaries preserving $\tau$.
\end{corollary}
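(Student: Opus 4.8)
\textbf{Proof plan for Corollary~\ref{cor:LocalOneToOne}.}

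The plan is to build the Bruhat cocycle $\tau$ edge-by-edge on the truncated simplex $\overline\Delta$, using Lemma~\ref{lemma:Bruhat} to choose the long edges and then forcing the short edges so that the hexagonal and triangular face relations hold. First I would fix coset representatives: for each pair $i<j$ of vertices of $\Delta$, apply Lemma~\ref{lemma:Bruhat} to $(g_iP, g_jP)$ to obtain representatives $g_ix_i^{(j)}$ and $g_jx_j^{(i)}$ with $(g_ix_i^{(j)})^{-1}(g_jx_j^{(i)})$ either counter-diagonal (if $g_iB\neq g_jB$) or diagonal (if $g_iB=g_jB$); declare $\alpha_{ij}$ to be this element (so condition~\ref{itm:4} of Definition~\ref{def:NaturalCocycle} is relaxed exactly as in the definition of a Bruhat cocycle), and set $\alpha_{ji}=\alpha_{ij}^{-1}$, giving~\ref{itm:2} on long edges. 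Note that the ambiguity in the representatives is precisely a $P$-valued rescaling at each vertex-corner of each short-triangle, which is where the "unique up to coboundaries preserving $\tau$" clause will come from.

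Next I would pin down the short edges. Each hexagonal face of $\overline\Delta$ corresponds to a $2$-face $\{i,j,k\}$ of $\Delta$ and has six edges: three long edges $\alpha_{ij},\alpha_{jk},\alpha_{ki}$ (with the appropriate orientations) alternating with three short edges $\beta^{i}_{jk},\beta^{j}_{ki},\beta^{k}_{ij}$. Condition~\ref{itm:1} says the ordered product around this hexagon is $I$; given the three long edges this determines each short edge uniquely in terms of the other two and the long edges, and the key point is that the resulting elements actually lie in $P$. This is exactly the content of the fundamental correspondence computation in Figure~\ref{fig:FundamentalCorrespondence} when the decoration is generic; in the non-generic case (some $c_{ij}=0$, i.e.\ some $g_iB=g_jB$) one checks directly from the Bruhat normal forms that the product of two counter-diagonal and/or diagonal $2\times 2$ matrices of the prescribed shapes, arranged around a triangle of cosets, is upper triangular unipotent --- this is a finite case analysis on which of the three pairs $\{i,j\}$, $\{j,k\}$, $\{i,k\}$ are Borel-equal, and in each case the computation is the one already done (generically) in~\cite{GaroufalidisThurstonZickert}. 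One also has to check consistency: a short edge $\beta^{k}_{ij}$ appears in the two hexagonal faces through vertex $k$ meeting along that short edge, and the two prescriptions must agree --- but the small triangular face near vertex $k$ has the three short edges near $k$ as its boundary and condition~\ref{itm:1} on that triangle is automatically satisfied once we have chosen the representatives $g_kx_k$ consistently at vertex $k$ (i.e.\ we must first arrange that the representatives used at corner $k$ for the three long edges $\alpha_{ki},\alpha_{kj}$, and the short triangle at $k$ all come from a single $g_kx_k$; this is possible because the representative at $k$ was only constrained up to $P$ by Lemma~\ref{lemma:Bruhat}, and $P$ is exactly the stabilizer we can use to synchronize). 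Then~\ref{itm:2} on short edges is immediate from $\beta^{k}_{ij}\beta^{k}_{ji}=I$.

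For uniqueness up to coboundaries preserving $\tau$: suppose $\tau$ and $\tau'$ are both Bruhat cocycles on $\overline\Delta$ compatible with $D$. Compatibility means that along edge-paths computing the (trivial, since $\overline\Delta$ is simply connected) holonomy both give representations with the same decoration; concretely, the vertex-labels $g_iP$ recovered from $\tau$ and from $\tau'$ agree as $P$-cosets after fixing a base vertex. Choosing a spanning tree of the $1$-skeleton of $\overline\Delta$ and transporting, one produces a zero-cochain $\eta\colon V(\overline\T)\to G$ with $\tau'=\eta\tau$; compatibility forces $\eta$ to take values in $P$ (it must fix each coset $g_iP$), and since $\tau'$ is again a Bruhat cocycle, $\eta$ preserves $\tau$ by definition. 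The main obstacle is the second paragraph: verifying that the short edges forced by the hexagon relations genuinely lie in $P$ in all the degenerate Borel-coincidence cases, and that the choices around the three hexagons fit together consistently along the short triangular faces. This is not deep --- it is a bookkeeping-heavy case analysis on the Bruhat cell containing each pair of cosets --- but it is where all the actual work sits, and it is where the relaxation "long edges may be diagonal or counter-diagonal" is essential, since a genuine natural cocycle (with all long edges counter-diagonal) need not exist when $D$ is non-generic.
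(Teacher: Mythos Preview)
Your approach overcomplicates the argument and contains a genuine error. The paper's proof is essentially one line: assign to each of the twelve vertices of $\overline\Delta$ (the vertex near $i$ on the long edge toward $j$) the element $\eta(v_{ij})=g_ix_{ij}\in G$, where $x_{ij}\in P$ is a representative provided by Lemma~\ref{lemma:Bruhat}, and set $\tau=\delta\eta$. Then the cocycle conditions~\ref{itm:1} and~\ref{itm:2} are automatic (any coboundary satisfies them), the long edges $\alpha_{ij}=x_{ij}^{-1}g_i^{-1}g_jx_{ji}$ are diagonal or counter-diagonal by construction, and the short edges $\beta^k_{ij}=x_{ki}^{-1}x_{kj}$ lie in $P$ automatically since both factors do. No case analysis on Borel-coincidences and no consistency checks are needed; compare Figure~\ref{fig:NaturalCocycleDefn}.

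Your specific error is the proposal to ``arrange that the representatives used at corner $k$ for the three long edges \dots\ all come from a single $g_kx_k$''. This is impossible in general: when the cosets $g_iB$ are pairwise distinct from $g_kB$, Lemma~\ref{lemma:Bruhat} gives \emph{unique} elements $x_{ki}\in P$, one for each $i\neq k$, and these are typically distinct. The truncated simplex has three separate vertices near each ideal vertex $k$, and the $0$-cochain $\eta$ must in general assign them different values $g_kx_{ki}$, $g_kx_{kj}$, $g_kx_{kl}$. Once you allow this (which is exactly the coboundary construction above), the short edges become $x_{ki}^{-1}x_{kj}\in P$ and there is nothing further to check. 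Your worry about ``forcing short edges via hexagon relations'' is also ill-posed as stated: a single hexagon relation is one equation in three unknown short edges, so it cannot determine them. The coboundary viewpoint dissolves both difficulties at once.
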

\begin{proof}
Lemma~\ref{lemma:Bruhat} provides the existence of a map $\eta\colon V(\overline\Delta)\to G$ such that $\delta\eta$ is a Bruhat cocycle (see Figure~\ref{fig:NaturalCocycleDefn}). Uniqueness up to coboundaries and compatiblity with $D$ is immediate from the construction.
\end{proof}

The following is the global analogue of Corollary~\ref{cor:LocalOneToOne}.
\begin{theorem}\label{thm:OneToOneGeneral} There is a one-to-one correspondence 
\begin{equation}\label{eq:OneToOneGeneral}
\left\{\txt{Decorated boundary-unipotent\\
$\SL(2,\C)$-representations}\right\}\overset{1:1}{\longleftrightarrow}\left\{\txt{Bruhat cocycles $\tau$ up to\\coboundaries preserving $\tau$}\right\}.
\end{equation}
\end{theorem}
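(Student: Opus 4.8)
The plan is to prove Theorem~\ref{thm:OneToOneGeneral} by bootstrapping the local statement Corollary~\ref{cor:LocalOneToOne} to a global one, using a patching argument over the simplices of $\overline\T$ together with the equivariance under $\pi_1(M)$. First I would set up the correspondence in both directions. Starting from a decorated boundary-unipotent representation $(\rho,D)$: lift $D$ to the simplices of the universal cover $\widetilde{\overline\T}$, apply Corollary~\ref{cor:LocalOneToOne} to each truncated simplex of $\widetilde{\overline\T}$ to get a Bruhat cocycle on each one compatible with the induced local decoration, and then argue that these local Bruhat cocycles can be chosen to agree on shared edges, yielding a global Bruhat cocycle on $\widetilde{\overline\T}$. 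The key point is that on a shared edge the two adjacent local Bruhat cocycles both represent the ``same'' coset data, so by the uniqueness-up-to-preserving-coboundaries clause of Corollary~\ref{cor:LocalOneToOne} they differ by a $P$-valued coboundary supported near that edge; one then uses the coboundary action to normalize them into agreement, proceeding simplex by simplex through a spanning tree of the dual graph. Since $\rho$ is boundary-unipotent and $D$ is $\rho$-equivariant, the resulting global cocycle can be taken $\pi_1(M)$-equivariant (average/propagate the normalization equivariantly), hence descends to a Bruhat cocycle on $\overline\T$ itself; the representation it recovers by taking products along edge paths is conjugate to $\rho$, and its canonical decoration is $D$.

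Conversely, given a Bruhat cocycle $\tau$ on $\overline\T$: taking products along edge paths gives a representation $\pi_1(M)\to\SL(2,\C)$ (well-defined up to conjugation by condition~\ref{itm:1}), and one must check it is boundary-unipotent. This follows because the short edges are $P$-valued (condition~\ref{itm:3}) and the peripheral subgroup is generated by loops in the boundary tori/surfaces, which in the truncated-simplex decomposition are represented by edge paths running only along short (hexagonal-face) edges; the product of $P$-elements around such a loop lies in $P$, giving the boundary-unipotent (in fact boundary-$P$) condition. The canonical decoration of $\tau$ is read off by assigning to each ideal vertex the $P$-coset determined by the local vertex data, exactly as in the construction underlying Corollary~\ref{cor:LocalOneToOne}; this is $\rho$-equivariant by construction.

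It then remains to check that the two assignments are mutually inverse, and here the ``up to coboundaries preserving $\tau$'' equivalence on the right is exactly what makes things work: the choices made in lifting $D$ and in normalizing adjacent local cocycles into agreement are precisely the ambiguity of a $P$-valued zero-cochain that preserves the Bruhat property, which by definition acts trivially on the decorated representation. So one shows: (i) different normalization choices in the forward map yield Bruhat cocycles differing by a preserving coboundary, hence equal in the target; and (ii) starting from $\tau$, building the decorated representation, and running the forward map returns a Bruhat cocycle equivalent to $\tau$ under a preserving coboundary. Both are local checks glued over $\overline\T$, reducing to the uniqueness clause of Corollary~\ref{cor:LocalOneToOne}.

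The main obstacle I expect is the equivariant patching: ensuring that the simplex-by-simplex normalization of local Bruhat cocycles on $\widetilde{\overline\T}$ into a single global cocycle can be done compatibly with the $\pi_1(M)$-action, so that it descends to $\overline\T$. Naively propagating the normalization through a spanning tree of the dual graph of $\widetilde{\overline\T}$ breaks equivariance; the fix is to choose a spanning tree in the dual graph of the finite complex $\overline\T$, lift it equivariantly, and verify that the ``closing up'' around the remaining (non-tree) edges is automatically consistent because it was already consistent in $\overline\T$ — i.e.\ no monodromy obstruction arises, precisely because we are recovering $\tau$ from data that came from an honest cocycle on $\overline\T$ in the first place (in the forward direction, the decorated representation is equivariant, so the obstruction cocycle is a coboundary). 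Making this monodromy/obstruction bookkeeping precise, while keeping the $P$-valued coboundary freedom under control, is the technical heart of the argument.
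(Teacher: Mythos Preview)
Your approach is correct in outline, but it is substantially more circuitous than the paper's and the ``main obstacle'' you flag is one the paper sidesteps entirely. You apply Corollary~\ref{cor:LocalOneToOne} simplex-by-simplex and then worry about patching the resulting local Bruhat cocycles along shared hexagonal faces, propagating normalizations through a spanning tree, and controlling equivariance and monodromy. The paper instead constructs the $0$-cochain $\eta\colon V(\Ttildebar)\to\SL(2,\C)$ directly, using the elementary observation that every vertex of $\overline\T$ is an endpoint of \emph{exactly one} long edge. Thus one can work long-edge-by-long-edge rather than simplex-by-simplex: choose an orientation and a lift $\tilde e$ of each long edge of $\overline\T$, apply Lemma~\ref{lemma:Bruhat} once to the pair of cosets $(\Gamma(\tilde e_0),\Gamma(\tilde e_1))$ to define $\eta(\tilde e_0)$ and $\eta(\tilde e_1)$, and extend $\rho$-equivariantly. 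This already hits every vertex of $\Ttildebar$, so $\delta\eta$ is globally defined with no patching. It is automatically a Bruhat cocycle: long edges are counter/diagonal by Lemma~\ref{lemma:Bruhat}, and short edges land in $P$ because both endpoints lift the same coset $\Gamma(v)$. Equivariance is built in by construction, so the cocycle descends to $\overline\T$ without any spanning-tree or monodromy bookkeeping.

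What your approach buys is nothing extra here; it would work, but the inductive patching you sketch is exactly the complication the paper's edge-indexed construction of $\eta$ is designed to avoid. If you want to streamline your argument, replace the simplex-by-simplex use of Corollary~\ref{cor:LocalOneToOne} with a direct appeal to Lemma~\ref{lemma:Bruhat} on each long edge of $\overline\T$; the rest of your write-up (the backward direction, boundary-unipotence from \ref{itm:3}, and the identification of the ambiguity with preserving coboundaries) is fine as is.
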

\begin{proof} We must show that given a decoration $D$ there exists a Bruhat cocycle $\tau$ compatible with $D$ and that $\tau$ is unique up to the action by coboundaries preserving $\tau$. A decoration $D\colon I(\widetilde M)\to\SL(2,\C)/P$ of a representation $\rho$ determines a map $\Gamma\colon V(\Ttildebar)\to \SL(2,\C)/P$ defined by taking a vertex near an ideal vertex $v$ to the coset assigned to $v$. We shall construct a $\rho$-equivariant lift of $\Gamma$ to a map $\eta\colon V(\Ttildebar)\to \SL(2,\C)$ such that $\tilde\tau=\delta\eta$ is a natural cocycle on $\Ttildebar$ descending to a natural cocycle $\tau$ on $\overline\T$. Any cocycle compatible with $D$ arises from this construction (since $\widetilde M$ is simply connected all lifts are coboundaries). Fix an orientation of each long edge $e$ of $\overline\T$, and a lift $\tilde e$ of $e$. Each vertex of $\Ttildebar$ is then in the $\pi_1$ orbit of exactly one of the vertices $\tilde e_0$ and $\tilde e_1$. By $\rho$-equivariance it is thus enough to define $\eta$ on the set of endpoints $\tilde e_0$ and $\tilde e_1$. Now define $\eta(\tilde e_i)=g_ix_i$, where $\Gamma(\tilde e_i)=g_iP$ and $x_i\in P$ is an element as provided by Lemma~\ref{lemma:Bruhat}. By construction, $\delta\eta$ is a Bruhat cocycle descending to a Bruhat cocycle on $\overline\T$. The freedom in the choice of $\eta$ is exactly the action by coboundaries preserving $\tau$. This proves the result.
\end{proof}

\subsubsection{Explicit formulas via Ptolemy coordinates}  Note that the labelings of the long edges are canonically determined, whereas a short edge is canonically determined if and only if it connects two counter diagonal long edges. The result below, generalizing the correspondence in Figure~\ref{fig:FundamentalCorrespondence}, gives explicit formulas for the canonically determined edges in terms of Ptolemy coordinates when at most one Ptolemy coordinate per face is zero (the edge-degenerate case; see Definition~\ref{def:PartitionType}). For $a\in\C$ and $b\in\C\setminus\{0\}$ let 
\begin{equation}
x(a)=\begin{pmatrix}1&a\\0&1\end{pmatrix},\qquad q(b)=\begin{pmatrix}0&-b^{-1}\\b&0\end{pmatrix}, \qquad d(b)=\begin{pmatrix}b&0\\0&b^{-1}\end{pmatrix}.
\end{equation}

\begin{proposition}\label{prop:ConcreteFormula} Let $\alpha_{ij}$ and $\beta^k_{ij}$ be the labelings of long and short edges of a Bruhat cocycle coming from a decoration $(g_0P,g_1P,g_2P,g_3P)$. If $c_{ij}$ are the Ptolemy coordinates, we have
\begin{equation}
\begin{aligned}
\alpha_{ij}&=q(c_{ij})\text{ if } c_{ij}\neq 0.\\
\beta_{ij}^k&=x(\frac{c_{ij}}{c_{ik}c_{kj}}) \text{ if }c_{ik}c_{kj}\neq 0.\\
\alpha_{ij}&=-d(c_{ik}^{-1}c_{kj}) \text{ if } c_{ij}=0\text{ and } c_{ik}c_{kj}\neq 0.
\end{aligned}
\end{equation}
\end{proposition}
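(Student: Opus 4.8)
The plan is to read off all three formulas from the explicit construction of a compatible Bruhat cocycle used in the proof of Theorem~\ref{thm:OneToOneGeneral} (equivalently Corollary~\ref{cor:LocalOneToOne}). Recall from there that any Bruhat cocycle $\tau$ on $\overline\Delta$ compatible with the decoration $(g_0P,\dots,g_3P)$ has the form $\tau=\delta\eta$, where the zero-cochain $\eta$ sends each vertex of $\overline\Delta$ near the $i$-th vertex of $\Delta$ to a representative $g_ix_i$ of $g_iP$ with $x_i\in P$ (distinct copies of the $i$-th vertex being allowed distinct representatives). Thus the long-edge label $\alpha_{ij}$ is $(g_ix_i)^{-1}(g_jx_j)$ evaluated at the two ends of that edge, while the short edge $\beta^k_{ij}$ --- which I take to run from the vertex of the truncation triangle at $k$ lying on the long edge towards $i$ to the one lying on the long edge towards $j$ --- is labeled $(g_kx_k')^{-1}(g_kx_k'')=(x_k')^{-1}x_k''\in P$. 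I will also use that a representative of $g_iP$ is exactly a matrix $(v_i\mid w_i)$ with $v_i=g_i\binom{1}{0}$ and $\det(v_i,w_i)=1$, that altering $x_i$ alters $w_i$ by a multiple of $v_i$, and that $c_{ij}=\det(v_i,v_j)$ throughout.

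First I would treat the counterdiagonal long edges. If $c_{ij}\neq0$ then $v_i,v_j$ are independent, so $g_iB\neq g_jB$; a diagonal label would force $g_iB=g_jB$, so $\alpha_{ij}$ must be counterdiagonal, and by Lemma~\ref{lemma:Bruhat} the representatives at the two ends are the unique ones with this property. Requiring the $(1,1)$- and $(2,2)$-entries of $(v_i\mid w_i)^{-1}(v_j\mid w_j)$ to vanish forces $w_i=v_j/c_{ij}$ and $w_j=-v_i/c_{ij}$, and then the remaining two entries expand as $2\times2$ determinants that evaluate, via $\det(v_i,v_j)=c_{ij}$, to give $\alpha_{ij}=q(c_{ij})$. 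Next, for a short edge $\beta^k_{ij}$ with $c_{ik}c_{kj}\neq0$: applying the previous step to the long edges $\{i,k\}$ and $\{j,k\}$ pins the two relevant representatives of $g_kP$ down to $(v_k\mid v_i/c_{ki})$ and $(v_k\mid v_j/c_{kj})$, and multiplying these out --- the off-diagonal entry is again a $2\times2$ determinant --- together with $c_{ki}=-c_{ik}$ gives $\beta^k_{ij}=x\!\big(c_{ij}/(c_{ik}c_{kj})\big)$. These two computations are exactly the ones behind Figure~\ref{fig:FundamentalCorrespondence}; the only point worth making is that each uses nothing beyond the nonvanishing of the coordinates appearing in its hypothesis.

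For the third and genuinely new case, suppose $c_{ij}=0$ but $c_{ik}c_{kj}\neq0$. Then $v_i$ and $v_j$ are parallel, $v_j=\mu v_i$ with $\mu\neq0$, and comparison with the third vertex gives $c_{jk}=\det(v_j,v_k)=\mu\,c_{ik}$, hence $\mu=c_{jk}/c_{ik}=-c_{ik}^{-1}c_{kj}$. The key point is that for \emph{any} admissible representatives the first column of $\alpha_{ij}=(v_i\mid w_i)^{-1}(v_j\mid w_j)$ equals $(v_i\mid w_i)^{-1}(\mu v_i)=\binom{\mu}{0}$; since the long edges of a Bruhat cocycle are diagonal or counterdiagonal and $\mu\neq0$, the label $\alpha_{ij}$ must be diagonal, and the only diagonal element of $\SL(2,\C)$ with first column $\binom{\mu}{0}$ is $d(\mu)$. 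Therefore $\alpha_{ij}=d(\mu)=d(-c_{ik}^{-1}c_{kj})=-d(c_{ik}^{-1}c_{kj})$, using $d(-t)=-d(t)$. This observation also shows that $\alpha_{ij}$ is canonically determined despite the non-uniqueness of the representatives in the Borel-equal case of Lemma~\ref{lemma:Bruhat}: the ambiguity moves $w_i,w_j$ but not the resulting diagonal matrix.

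I expect the arithmetic itself to be routine --- a handful of $2\times2$ determinant manipulations. The part requiring care, which I would nail down before computing, is the combinatorics: the convention for which ordered pair of truncation-triangle vertices the short edge $\beta^k_{ij}$ joins, keeping the long-edge orientations consistent with $\alpha_{ij}=(g_ix_i)^{-1}(g_jx_j)$, and the sign conventions $c_{ji}=-c_{ij}$ together with the definitions of $q(\cdot)$ and $d(\cdot)$ --- it is precisely these that produce the minus sign in the last formula. The only genuinely conceptual step is the observation above that $\alpha_{ij}$ is well defined in the degenerate case.
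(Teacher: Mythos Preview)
Your argument is correct, and it takes a genuinely different route from the paper's. You redo everything from scratch by pinning down the coset representatives $(v_i\mid w_i)$ explicitly and reading off each matrix entry as a $2\times2$ determinant. The paper instead argues structurally: for the first two formulas it simply invokes the already-established fundamental correspondence (Figure~\ref{fig:FundamentalCorrespondence}), observing that the formula for $\alpha_{ij}$ involves only the single edge $ij$ and that the formula for $\beta^k_{ij}$ extends to $c_{ij}=0$ by analytic continuation. For the third formula the paper uses the hexagonal cocycle relation on the face $ijk$, namely $\alpha_{ij}=\beta^i_{jk}\,\alpha_{ik}\,\beta^k_{ij}\,\alpha_{kj}\,\beta^j_{ki}$: substituting $\alpha_{ik}=q(c_{ik})$, $\alpha_{kj}=q(c_{kj})$, and $\beta^k_{ij}=x(0)=I$ gives a product of the form (unipotent)$\cdot q(c_{ik})q(c_{kj})\cdot$(unipotent) $=$ (unipotent)$\cdot(-d(c_{ik}^{-1}c_{kj}))\cdot$(unipotent), which is upper triangular with diagonal part $-d(c_{ik}^{-1}c_{kj})$; since a long edge of a Bruhat cocycle must be diagonal or counterdiagonal, this forces $\alpha_{ij}=-d(c_{ik}^{-1}c_{kj})$. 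Your direct computation has the advantage of being self-contained and of making the well-definedness of $\alpha_{ij}$ in the degenerate case completely transparent; the paper's approach is shorter once the generic correspondence is in hand and makes visible that the third formula is forced by the first two together with the cocycle condition.
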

\begin{proof}
The first formula only involves a single edge and is thus independent of whether or not the remaining Ptolemy coordinates are zero. The second only involves the face $ijk$ and holds by the fundamental correspondence if $c_{ij}\neq 0$ and for $c_{ij}=0$ as well by analytic continuation. The third is a consequence of the first two using the cocycle condition. 
\end{proof}

\begin{remark}\label{rm:CoboundaryAction}
Identifying $P$ with $\C$ via $\Matrix{1}{x}{0}{1}\leftrightarrow x$ allows us to view the labelings of the short edges near an ideal vertex $v$ of $M$ as complex vectors. The action of a coboundary taking a vertex $v$ to $x\in\C$ (keeping all other vertices fixed) then corresponds to moving $v$ by $x$ (see Figure~\ref{fig:CoboundaryAction}). 
\end{remark}

\begin{figure}[htb]
\hspace{0.5cm}\begin{minipage}[b]{0.3\textwidth}
\scalebox{0.75}{\input{figures_gen/CoboundaryAction.tex}}
\end{minipage}
\begin{minipage}[b]{0.55\textwidth}
\scalebox{0.75}{\input{figures_gen/NaturalCocycleDefn.tex}}
\end{minipage}\\
\begin{minipage}[t]{0.48\textwidth}
\caption{The action by coboundary corresponds to moving the ``center'' point.}\label{fig:CoboundaryAction}
\end{minipage}
\begin{minipage}[t]{0.48\textwidth}
\caption{The natural cocycle from a decoration.}\label{fig:NaturalCocycleDefn}
\end{minipage}
\end{figure}

\subsection{$\PSL(2,\C)$-representations}
The analogue of Theorem~\ref{thm:OneToOneGeneral} for $\PSL(2,\C)$-representations and $\PSL(2,\C)$ Bruhat cocycles also holds. The proof is identical to that of Theorem~\ref{thm:OneToOneGeneral} using the obvious analogues of Lemma~\ref{lemma:Bruhat} and Corollary~\ref{cor:LocalOneToOne} for $\PSL(2,\C)$.

\subsection{Non-boundary-unipotent representations} Let $M$ be as in Section~\ref{sec:Enhanced}. Recall that a decoration of $\rho\colon\pi_1(M)\to\SL(2,\C)$ is a $\rho$-equivariant assignment of $P$-cosets to the triangular faces of $\Ttildebar$. A \emph{fattened Bruhat cocycle} is a cocycle on the fattened decomposition restricting to a Bruhat cocycle on each truncated simplex and labeling each face pairing edge by diagonal elements.
\begin{theorem}  There is a one-to-one correspondence between decorated $\SL(2,\C)$-representations and fattened Bruhat cocycles.
\end{theorem}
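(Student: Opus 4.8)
The statement to be proved is the analogue of Theorem~\ref{thm:OneToOneGeneral} in the non-boundary-unipotent (enhanced) setting: a one-to-one correspondence between decorated $\SL(2,\C)$-representations $\rho\colon\pi_1(M)\to\SL(2,\C)$ (now a $\rho$-equivariant assignment of $P$-cosets to the triangular faces of $\Ttildebar$) and fattened Bruhat cocycles. The plan is to mimic the proof of Theorem~\ref{thm:OneToOneGeneral} verbatim on the truncated-simplex part of the fattened decomposition, and then explain separately how the face-pairing edges get their (diagonal) labels and why this is forced. So the proof splits into three parts: (i) the local statement on a single truncated simplex, which is exactly Corollary~\ref{cor:LocalOneToOne} and requires no change since the truncated simplices of the fattened decomposition are honest truncated simplices; (ii) the face-pairing edges; (iii) assembling the local data into a global equivariant cocycle and checking the two directions of the correspondence.

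For part (ii), the key observation is Lemma~\ref{lemma:Bruhat} applied to the two triangular faces glued by a face pairing: a face pairing identifies a triangular face $f$ of one truncated simplex with a triangular face $f'$ of another, and the decoration assigns $P$-cosets $gP$ and $g'P$ to (lifts of) $f$ and $f'$. Since these two faces are identified in $M$, their decorating cosets are related by $\rho$ of the corresponding deck transformation, but as \emph{abstract} cosets $gP$ and $g'P$ need not be equal — they do satisfy $gB = g'B$ though (the face-pairing lies within a single boundary component, whose image is in a conjugate of $B$). By the second clause of Lemma~\ref{lemma:Bruhat}, there are coset representatives $gx_0$, $g'x_1$ with $x_i \in P$ such that $(gx_0)^{-1}g'x_1$ is \emph{diagonal}; this diagonal element is precisely the label of the face-pairing edge. (Compare Figure~\ref{fig:EPCocycle}, where the face-pairing edge is labeled $d(y)$.) The point is that, just as on a truncated simplex, the Bruhat-decomposition lemma is exactly what is needed to straighten the coset data into cocycle data, and the ``$gB = g'B$'' case of the lemma is what produces the diagonal (rather than counter-diagonal) labels demanded by the definition of a fattened Bruhat cocycle.

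For part (iii), I would reproduce the argument of Theorem~\ref{thm:OneToOneGeneral}: a decoration $D$ of $\rho$ gives a map $\Gamma$ from the vertices of the universal cover of the fattened decomposition to $\SL(2,\C)/P$; one constructs a $\rho$-equivariant lift $\eta$ to $\SL(2,\C)$ by choosing, for each orbit of vertices, a coset representative $g_i x_i$ with $x_i \in P$ as supplied by Lemma~\ref{lemma:Bruhat} (applied along long edges of the truncated simplices exactly as before, and along face-pairing edges as in part (ii)); then $\tilde\tau = \delta\eta$ is, by construction, a fattened natural/Bruhat cocycle on the universal cover descending to one on $M$. Conversely a fattened Bruhat cocycle determines a representation by taking products along edge paths and is canonically decorated, by reading off the $P$-cosets at the triangular faces. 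These two constructions are mutually inverse, and the only ambiguity in $\eta$ — left multiplication by elements of $G$ (quotiented out in the definition of decoration) together with the $P$-valued zero-cochains preserving the cocycle — matches exactly the equivalence built into ``fattened Bruhat cocycle''; since the fattened decomposition's universal cover is still simply connected, all lifts differ by coboundaries, so there is nothing extra to check.

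The main obstacle I anticipate is bookkeeping rather than conceptual: one must be careful that the face-pairing edges are handled consistently with the long-edge data, i.e.\ that the choices $x_i \in P$ made at a vertex are the \emph{same} whether that vertex is approached along a long edge of a truncated simplex or along a face-pairing edge, so that the cocycle condition \ref{itm:1} holds on \emph{every} face of the fattened decomposition (in particular on the new quadrilateral faces created by thickening the hexagonal faces, as in Figure~\ref{fig:FundRectIdentifications}). Once the orbit structure of the vertex set of $\Ttildebar$ under $\pi_1(M)$ is set up as in the proof of Theorem~\ref{thm:OneToOneGeneral} — picking one representative per $\pi_1$-orbit and extending by $\rho$-equivariance — this consistency is automatic, but it is the one place where the argument genuinely uses that the face pairings and the simplices fit together into a closed manifold. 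Everything else is a routine transcription of the boundary-unipotent case, the point being that passing from $P$-cosets on ideal vertices to $P$-cosets on triangular faces changes nothing essential because Lemma~\ref{lemma:Bruhat} is a statement about pairs of cosets and applies equally well to face-pairing edges.
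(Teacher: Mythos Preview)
Your approach matches the paper's, which simply says ``the proof is similar to that of Theorem~\ref{thm:OneToOneGeneral} using the fattened decomposition instead of the regular one''; you have supplied the details the paper omits.

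One small correction to your part~(ii): a face pairing identifies \emph{hexagonal} faces, not triangular ones, and the two triangular faces at the ends of a face-pairing edge are neither identified in $M$ nor related by a deck transformation. They are simply distinct triangular faces near the \emph{same} ideal vertex of $\Mtildehat$, and it is this---the fact that a $P$-decoration of a boundary-Borel representation refines a $B$-decoration of the ideal vertices---that forces $gB=g'B$ and hence, by the second clause of Lemma~\ref{lemma:Bruhat}, the diagonal label on the face-pairing edge. With that adjustment your argument goes through as written.
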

\begin{proof}
The proof is similar to that of Theorem~\ref{thm:OneToOneGeneral} using the fattened decomposition instead of the regular one.
\end{proof}


\section{The edge relations}\label{sec:EdgeRelations}
We now define a relation among the Ptolemy coordinates of a decorated (boundary-unipotent) representation. This relation is a consequence of the Ptolemy relations in the case when all Ptolemy coordinates are nonzero, but if some Ptolemy coordinates are zero, this relation is independent of the Ptolemy relations. Hence, when defining the invariant Ptolemy variety this relation must be imposed in addition to the Ptolemy relations.

Let $K$ be the space obtained by cyclically gluing together ordered simplices $\Delta_0,\dots,\Delta_{N-1}$ along a common edge. We order the vertices of each simplex such that the common edge is the $01$ edge of each simplex, and such that the orientations induced by the orderings agree (see Figure~\ref{fig:EdgeNbh}).
We refer to the edges (other than the common edge) as \emph{top}, \emph{bottom}, and \emph{center} edges respectively.  

\begin{lemma}\label{lemma:edgeRelationEquivalence}
Let $c$ be a Ptolemy assignment on $K$ where all the Ptolemy coordinates of the top and bottom edges of $K$ are nonzero. We then have
\begin{equation}\label{eq:EdgeRelations}
\sum_{k=0}^{N-1} \frac{c_{23,k}}{c_{12,k}c_{13,k}}=0\iff\sum_{k=0}^{N-1}\frac{c_{23,k}}{c_{02,k}c_{03,k}}=0.
\end{equation}
Moreover, if the Ptolemy coordinate $c_{01}$ of the interior edge is also nonzero, both equations are satisfied. 
\end{lemma}
\begin{proof}
We first assume that $c_{01}\neq 0$. We have
\begin{equation}\label{eq:First}
\begin{aligned}
c_{01}\sum_{k=0}^{N-1} \frac{c_{23,k}}{c_{12,k}c_{13,k}}&=\sum_{k=0}^{N-1} \frac{c_{02,k}c_{13,k}-c_{03,k}c_{12,k}}{c_{12,k}c_{13,k}}\\
&=\sum_{k=0}^{N-1} \frac{c_{03,k-1}c_{13,k}-c_{03,k}c_{13,k-1}}{c_{13,k-1}c_{13,k}}\\
&=\sum_{k=0}^{N-1}\Big(\frac{c_{03,k-1}}{c_{13,k-1}}-\frac{c_{03,k}}{c_{13,k}}\big)\\
&=0.
\end{aligned}
\end{equation}
Here, the first equality follows from the Ptolemy relations~\eqref{eq:PtolemyRelation}, and the second from the identification relations $c_{12,k}=c_{13,k-1}$ and $c_{02,k}=c_{03,k-1}$ (indices modulo $N$). By a similar computation, one also has
\begin{equation}\label{eq:Second}
c_{01}\sum_{k=0}^{N-1} \frac{c_{23,k}}{c_{02,k}c_{03,k}}=0.
\end{equation}
Since we are assuming that $c_{01}\neq 0$, \eqref{eq:First} and \eqref{eq:Second} together prove the second statement.

If $c_{01}=0$, the Ptolemy relations become $c_{03,k}c_{12,k}=c_{02,k}c_{13,k}$, which together with the identification relations $c_{12,k}=c_{13,k-1}$ and $c_{02,k}=c_{03,k-1}$ imply that the ratio between $c_{12,k}c_{13,k}$ and $c_{02,k}c_{03,k}$ is independent of $k$. This concludes the proof.
\end{proof}

\begin{figure}[htb]
\hspace{0.5cm}\begin{minipage}[b]{0.55\textwidth}
\scalebox{0.8}{\input{figures_gen/EdgeNbh.tex}}
\end{minipage}
\hspace{-1cm}
\begin{minipage}[b]{0.33\textwidth}
\scalebox{0.75}{\input{figures_gen/EdgeRelationTop.tex}}
\end{minipage}\\
\begin{minipage}[t]{0.55\textwidth}
\caption{A Ptolemy assignment on $K$.}\label{fig:EdgeNbh}
\end{minipage}
\begin{minipage}[t]{0.44\textwidth}
\caption{Bruhat cocycle on $\overline K$ (near top vertex).}\label{fig:EdgeRelationTop}
\end{minipage}
\end{figure}

\begin{definition}\label{def:EdgeRelation}
We call the (equivalent) relations~\eqref{eq:EdgeRelations} \emph{edge relations} around the center edge of $K$.
\end{definition}

\begin{remark}
When all Ptolemy coordinates are nonzero, the fundamental correspondence provides a natural cocycle on the space $\overline K$ obtained from $K$ by truncating the simplices. In this case, the edge relations are equivalent to the relation coming from the fact that the product of short edges around the top and bottom is $I$ (see Figure~\ref{fig:EdgeRelationTop}). When some of the Ptolemy coordinates are zero this relation is no longer automatically satisfied and must be imposed.
\end{remark}

\subsection{Edge relations for $\PSL(2,\C)$ Ptolemy assignments}
The obvious analogue of Lemma~\ref{lemma:edgeRelationEquivalence} for $\PSL(2,\C)$ Ptolemy assignments still holds and we define the edge relations for $\PSL(2,\C)$ Ptolemy assignments by the exact same formula~\ref{eq:EdgeRelations}.

\subsection{Edge relations for enhanced Ptolemy assignments}
Let $c$ be an enhanced Ptolemy assignment on $K$ where all the Ptolemy coordinates of the top and and bottom edges of $K$ are nonzero.
The identification relations are given by $c_{12,k}=t_kh_kc_{13,k-1}$, $c_{02,k}=b_kh_kc_{03,k-1}$ and $c_{01,k}=c_{01,k-1}t_kb_k$, where the $t_k$, $b_k$ and $h_k$ are monomials in the $m_s$ and $l_s$ (see Figure~\ref{fig:FatEdgeNbh}). An elementary modification (we leave the details to the reader) of the proof of Lemma~\ref{lemma:edgeRelationEquivalence} shows that we have  
\begin{equation}\label{eq:EdgeEquationsEnhanced}
\sum_{k=0}^{N-1}\Big(\frac{c_{23,k}}{c_{12,k}c_{13,k}}\prod_{j=1}^k t_j^2\Big)=0\iff\sum_{k=0}^{N-1}\Big(\frac{c_{23,k}}{c_{02,k}c_{03,k}}\prod_{j=1}^k b_k^2\Big)=0,
\end{equation}
and that both are satisfied if the $c_{01,k}$ are nonzero. We refer to these as \emph{edge relations}. The geometric interpretation of the edge relations is given in Figure~\ref{fig:FatEdgeRelationTop}.

\begin{figure}[htb]
\hspace{0.5cm}\begin{minipage}[b]{0.55\textwidth}
\scalebox{1.2}{\input{figures_gen/FattenedEdgeNbh.tex}}
\end{minipage}
\hspace{-1cm}
\begin{minipage}[b]{0.35\textwidth}
\scalebox{0.75}{\input{figures_gen/FattenedEdgeRelationTop.tex}}
\end{minipage}\\
\begin{minipage}[t]{0.45\textwidth}
\caption{Fattened version of Figure~\ref{fig:EdgeNbh}.}\label{fig:FatEdgeNbh}
\end{minipage}
\hspace{-0.5cm}
\begin{minipage}[t]{0.5\textwidth}
\caption{Fattened Bruhat cocycle (top view).}\label{fig:FatEdgeRelationTop}
\end{minipage}
\end{figure}

\section{Transitive edge sets and degeneracy types}\label{sec:TransitiveEdgeSets}
Note that a decoration $D$ determines a distinguished subset of the edge set of $\T$, namely those whose Ptolemy coordinates are zero. We refer to this subset as the \emph{zero-set} of $D$.


\begin{definition}\label{def:TransitiveEdgeSubset}
A \emph{transitive edge set} is a subset $E$ of the edges of $\T$ such that if two edges on a face are in $E$, so is the third. The set of transitive edge sets is denoted by $\TE(\T)$. Given $E\in\TE(\T)$, we refer to the edges in $E$ as \emph{zero-edges}. To stress the dependence on $\T$ we sometimes denote a transitive edge set as $(\T,E)$ instead of $E$.
\end{definition}

\begin{lemma}\label{lemma:TransitiveDec} The zero-set of a decoration is transitive.
\end{lemma}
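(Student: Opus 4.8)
The statement to prove is Lemma~\ref{lemma:TransitiveDec}: the Ptolemy partition of a decoration is transitive, i.e.~if two of the three edges of a face of $\T$ are zero-edges (their Ptolemy coordinates vanish), then so is the third. The natural approach is purely local: the Ptolemy partition is defined edge by edge from the Ptolemy coordinates, and each face of $\T$ lies in some simplex, so it suffices to prove the corresponding statement for a single decorated simplex $(g_0P,g_1P,g_2P,g_3P)$ and one of its four triangular faces. Thus I would reduce immediately to checking: for vectors $v_i=g_i\Vector{1}{0}$ as in Figure~\ref{fig:FundamentalCorrespondence}, with $c_{ij}=\det(v_i,v_j)$, if two of $c_{ij},c_{jk},c_{ik}$ vanish then the third does too.

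\textbf{Key step.} The heart of the matter is linear algebra in $\C^2$. Say $c_{ij}=\det(v_i,v_j)=0$ and $c_{jk}=\det(v_j,v_k)=0$. Since $v_j\neq 0$ (it is the first column of an invertible matrix, hence nonzero), $\det(v_i,v_j)=0$ forces $v_i$ to be a scalar multiple of $v_j$, and likewise $\det(v_j,v_k)=0$ forces $v_k$ to be a scalar multiple of $v_j$. Hence $v_i$ and $v_k$ are both proportional to $v_j$, so they are proportional to each other, giving $c_{ik}=\det(v_i,v_k)=0$. The symmetric relation $c_{ji}=-c_{ij}$ means the choice of which two of the three are assumed zero is immaterial, so this single computation handles all cases. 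Finally, the value $c_{ij}$ depends only on the edge $\varepsilon_{ij}$ and not on the simplex it is viewed in (by the identification relations, which the Ptolemy coordinates coming from a global decoration satisfy), so the local conclusion on each face of each simplex assembles into the global statement about the edges of $\T$.

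\textbf{Where the work is.} There is essentially no obstacle here; the only point requiring a word of care is that the three vectors $v_i,v_j,v_k$ need not be distinct or even pairwise non-proportional in the degenerate situation, so one must phrase the argument so it does not secretly assume genericity. Using that $v_j\neq 0$ and reasoning through proportionality (rather than through the Ptolemy relation $c_{03}c_{12}+c_{01}c_{23}=c_{02}c_{13}$, which only gives $0=0$ when enough coordinates vanish and so is useless here) avoids this pitfall. One should also note that the vectors $v_i$ depend on the choice of coset representatives $g_i$, but scaling $g_i$ by an element of $P$ scales $v_i$ by $1$ and scaling by the ambient $\SL(2,\C)$ action rescales all $v_i$ by a common unit determinant factor, so the vanishing locus of each $c_{ij}$ is well defined; this is already implicit in Remark~\ref{rm:NonGenericDecoration}. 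With these remarks the proof is a two-line argument.
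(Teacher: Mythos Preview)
Your proposal is correct and follows essentially the same argument as the paper: reduce to a single face, write $c_{ij}=\det(v_i,v_j)$ with $v_i=g_i\Vector{1}{0}$, and use that linear dependence among nonzero vectors in $\C^2$ is transitive. Your added care in noting $v_j\neq 0$ makes explicit the point the paper leaves implicit.
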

\begin{proof}
If $g_0P$, $g_1P$ and $g_2P$ are the cosets assigned to the vertices of a face, the Ptolemy coordinates are given by $c_{ij}=\det(v_i,v_j)$, where $v_i=g_i\left(\begin{smallmatrix}1\\0\end{smallmatrix}\right)$. Hence, a Ptolemy coordinate $c_{ij}$ is zero if and only if $v_i$ and $v_j$ are linearly dependent. The result now follows from the fact that linear dependence of vectors is transitive.
\end{proof}

\begin{definition} Let $E\in\TE(\T)$ be a transitive edge set. A face of $\T$ is \emph{degenerate} if all its three edges are zero-edges. A simplex of $\T$ is degenerate if all of its six edges are zero-edges.
\end{definition}

\begin{definition}\label{def:PartitionType}
We divide the transitive edge sets into the following types:
\begin{align*}
\textbf{Non-degenerate:}& \text{ No zero-edges (there is a unique such).}\\
\textbf{Edge-degenerate:}& \text{ Some zero-edges, but no degenerate faces.}\\
\textbf{Face-degenerate:}& \text{ Some degenerate faces, but no degenerate simplices.}\\
\textbf{Simplex-degenerate:}& \text{ Some, but not all, simplices are degenerate.}\\
\textbf{Totally degenerate:}& \text{ All simplices are degenerate (there is a unique such).}
\end{align*}
\end{definition}
Clearly, any $E\in\TE(\T)$ falls into exactly one of these types.
\subsection{Decorations, Ptolemy assignments and Bruhat cocycles}
\begin{definition} Let $E\in\TE(\T)$. A decoration is of \emph{type $E$} if its zero-edges agree with $E$. A Bruhat cocycle $\tau$ has \emph{type $E$} if the set of long edges that are diagonal agrees with the zero-edges of $E$. A Ptolemy assignment has type $E$ if the set of edges whose Ptolemy coordinate is zero agrees with the zero-edges of $E$.
\end{definition}
Note that the one-to-one-correspondence in Theorem~\ref{thm:OneToOneGeneral} preserves the type. We stress that the type depends on the triangulation.
\begin{remark}
Decorations of type $E$ may not exist. For example, if there is an edge loop where all but one edge is zero, the argument in the proof of Lemma~\ref{lemma:TransitiveDec} shows that $E$ cannot be the zero-set of a decoration.
\end{remark}

\subsection{The totally degenerate edge set}\label{sec:TotallyDegenerate}
\begin{proposition}\label{prop:TotallyDegenerate}
If a decoration of a representation $\rho$ is totally degenerate, then $\rho$ is reducible.
\end{proposition}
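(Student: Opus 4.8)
The plan is to reinterpret total degeneracy as the statement that the developing map~\eqref{eq:DevelopingMap}, composed with the projection $G/P\to G/B=\mathbb P^1$, is constant, and then to read off reducibility directly from $\rho$-equivariance. First I would fix, for each edge of $\widetilde\T$ with ideal endpoints $v,w$, representatives $g_v,g_w$ of the cosets $D(v),D(w)$, and set $\hat v=g_v\Vector10$, $\hat w=g_w\Vector10$; these are nonzero vectors in $\C^2$. By the formula in Figure~\ref{fig:FundamentalCorrespondence} (cf.\ the proof of Lemma~\ref{lemma:TransitiveDec}), the Ptolemy coordinate of the edge of $\T$ below $\{v,w\}$ vanishes if and only if $\det(\hat v,\hat w)=0$, and by $\rho$-equivariance of $D$ together with the identification relations this condition does not depend on the chosen lift of the adjacent simplex. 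Hence the hypothesis that the decoration is totally degenerate says precisely that $\det(\hat v,\hat w)=0$ — equivalently, that $\hat v$ and $\hat w$ span the same line of $\C^2$ — for every edge $\{v,w\}$ of $\widetilde\T$. I would also note that the line $[\hat v]\in\mathbb P^1$ depends only on the coset $D(v)$ (replacing $g_v$ by $g_vb$ with $b\in B$ merely rescales $\hat v$), so $v\mapsto[\hat v]$ is exactly $D$ followed by $G/P\to G/B=\mathbb P^1$; the same remarks apply verbatim to a $B$-decoration.

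Next I would use connectedness: the edges of $\widetilde\T$ together with their ideal endpoints form the $1$-skeleton of $\Mtildehat$, which is connected since $\Mtildehat$ is connected (and nonempty, because $\partial M\neq\emptyset$). Combined with the previous paragraph this forces $v\mapsto[\hat v]$ to be constant on $I(\widetilde M)$, so there is a single line $\ell\subset\C^2$ with $[\hat v]=\ell$ for every ideal vertex $v$. Finally, $\rho$-equivariance gives $\rho(\gamma)\ell=\rho(\gamma)[\hat v]=[\widehat{\gamma v}]=\ell$ for every $\gamma\in\pi_1(M)$, so $\ell$ is a $\rho$-invariant line in $\C^2$; conjugating $\rho$ so that $\ell=\Span\{\Vector10\}$ makes it upper triangular, and in particular $\rho$ is reducible.

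The main obstacle — such as it is — lies entirely in the bookkeeping of the first step: one must verify that vanishing of the Ptolemy coordinates attached to the edges of $\T$ genuinely propagates to vanishing of $\det(\hat v,\hat w)$ on every edge of the universal cover, and that the line $[\hat v]$ is a well-defined function of $D(v)$ rather than of the chosen representative $g_v$. Once these routine points are settled, the remainder of the argument is purely formal, and it is insensitive to whether one works with $P$-decorations or $B$-decorations.
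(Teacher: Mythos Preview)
Your proof is correct and follows essentially the same approach as the paper: the paper's two-line argument simply asserts that if all Ptolemy coordinates vanish then the decoration sends every ideal vertex of $\widetilde M$ to the same $B$-coset, and that $\rho$-equivariance then forces reducibility. Your write-up unpacks exactly these two steps --- the first via the connectedness of the $1$-skeleton of $\Mtildehat$, the second via the invariant line $\ell$ --- so the underlying idea is identical, just made more explicit.
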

\begin{proof} If all Ptolemy coordinates are zero, the decoration must take all ideal vertices of $\widetilde M$ to the same $B$-coset. This is only possible if $\rho$ is reducible.
\end{proof}
\begin{proposition}\label{prop:ReducibleDegenerate} If a decoration of a reducible representation $\rho$ is not totally degenerate, then $\rho$ is boundary-degenerate. 
\end{proposition}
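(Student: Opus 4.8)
The plan is to prove the contrapositive: assuming $\rho$ is reducible, boundary-unipotent, and \emph{boundary-nondegenerate}, I will show that every decoration of $\rho$ is totally degenerate. The first ingredient is reducibility itself: a reducible $\SL(2,\C)$-representation preserves a line $L\subset\C^2$, hence fixes the corresponding point $q=[L]$ of $G/B=\C\cup\{\infty\}$, i.e.\ $\rho(\gamma)q=q$ for \emph{every} $\gamma\in\pi_1(M)$.

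Next I would pin down the peripheral images. Let $\tilde v\in I(\widetilde M)$ be an ideal vertex lying over a boundary component $\partial_i M$; its stabilizer $\Stab(\tilde v)$ in $\pi_1(M)$ is conjugate to $\pi_1(\partial_i M)$, so $\rho(\Stab(\tilde v))$ is conjugate to $\rho(\pi_1(\partial_i M))$. Since $\rho$ is boundary-unipotent this group is unipotent, and since $\rho$ is boundary-nondegenerate it is nontrivial. A nontrivial unipotent subgroup of $\SL(2,\C)$ is conjugate into $P$ and hence fixes exactly one point of $G/B$; since $q$ is fixed by all of $\rho(\pi_1(M))$, and in particular by $\rho(\Stab(\tilde v))$, that unique fixed point must be $q$.

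Now let $D$ be any decoration of $\rho$, and let $\overline D\colon I(\widetilde M)\to G/B$ be its composite with the projection $G/H\to G/B$ (so $\overline D=D$ if $H=B$). By $\rho$-equivariance, $\overline D(\tilde v)$ is fixed by $\rho(\Stab(\tilde v))$ for every ideal vertex $\tilde v$, and therefore $\overline D(\tilde v)=q$; thus $\overline D$ is constant. As in the proof of Lemma~\ref{lemma:TransitiveDec}, the Ptolemy coordinate $c_{ij}=\det(v_i,v_j)$ of an edge vanishes precisely when the two cosets on that edge coincide as $B$-cosets, so constancy of $\overline D$ together with connectedness of $\widetilde\T$ forces all Ptolemy coordinates of $D$ to vanish. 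Hence $D$ is totally degenerate, contrary to hypothesis, and the proposition follows.

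The only delicate point is the classification of the peripheral image, and it is precisely there that boundary-unipotency is used: a nontrivial \emph{unipotent} subgroup of $\SL(2,\C)$ fixes a unique point of $G/B$, whereas a nontrivially diagonalizable peripheral subgroup would fix two points and would permit a nonconstant $\overline D$. Equivalently, one may avoid the fixed-point bookkeeping by noting that $\rho$ always admits the constant decoration $\overline D\equiv q$ and invoking the uniqueness of the $B$-decoration of a boundary-nondegenerate boundary-unipotent representation (Proposition~\ref{prop:Freedom}); the two arguments amount to the same thing.
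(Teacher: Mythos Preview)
Your proof is correct and follows essentially the same approach as the paper's two-sentence argument: the paper observes that a reducible representation admits the constant (totally degenerate) decoration at its fixed $B$-coset, and then invokes uniqueness of the $B$-decoration in the boundary-nondegenerate case (Proposition~\ref{prop:Freedom}/Corollary~\ref{cor:Freedom}) to conclude that any other decoration forces boundary-degeneracy. You unpack this uniqueness argument explicitly via the fixed-point count for nontrivial unipotent subgroups, and then note yourself that the shortcut via Proposition~\ref{prop:Freedom} amounts to the same thing---which is exactly what the paper does.
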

\begin{proof} A reducible representation has a totally degenerate decoration. The only way it can also have a decoration which is not totally degenerate is if a boundary component is a sphere or collapsed.
\end{proof}

\section{The Ptolemy variety of a transitive edge set}\label{sec:InvariantPtolemy}
We now define a Ptolemy variety $P(\T,E)$ for each transitive edge set $E$, which is not totally degenerate. We shall see that this variety parametrizes decorations of type $E$, and that the representation corresponding to an element in $P(\T,E)$ can be recovered explicitly via the corresponding Bruhat cocycle. 
\subsection{Edge-degenerate edge sets}

Let $E\in\TE(\T)$ be edge-degenerate. Then each face of $\T$ has at most one zero-edge. 
\begin{proposition}\label{prop:RecoverDec}
Let $c$ be a Ptolemy assignment on a simplex $\Delta$. If each face has at most one Ptolemy coordinate which is zero, then $c$ is the Ptolemy assignment of a unique decoration.
\end{proposition}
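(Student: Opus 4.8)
The plan is to translate the statement into linear algebra on $\C^2$. Via the bijection $g_iP\leftrightarrow v_i:=g_i\Vector{1}{0}$ (here $P$ is exactly the $\SL(2,\C)$-stabilizer of $\Vector{1}{0}$, so this assignment is well defined and injective, and it is surjective onto $\C^2\setminus\{0\}$), a decoration of the ordered simplex $\Delta$ is the same datum as a $4$-tuple $(v_0,v_1,v_2,v_3)$ of \emph{nonzero} vectors in $\C^2$, taken modulo simultaneous left multiplication by $\SL(2,\C)$; under this identification the Ptolemy coordinates are $c_{ij}=\det(v_i,v_j)$ (Figure~\ref{fig:FundamentalCorrespondence}). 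So the claim to prove becomes: any $c=(c_{ij})$ obeying the edge orientation relations and the Ptolemy relation~\eqref{eq:PtolemyRelation}, with at most one zero coordinate on each face, arises as the $2\times 2$ minors $\det(v_i,v_j)$ of a $4$-tuple of nonzero vectors, uniquely up to $\SL(2,\C)$.

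For existence I would reason as follows. Each face has at most one zero edge, so at least one $c_{ij}$ is nonzero; after relabelling the vertices --- harmless, since the various index-permuted forms of the Ptolemy relation are equivalent given the orientation relations --- I may assume $c_{01}\neq0$. Fix a basis $v_0,v_1$ of $\C^2$ with $\det(v_0,v_1)=c_{01}$, for instance $v_0=\Vector{1}{0}$ and $v_1=\Vector{0}{c_{01}}$, and for $i=2,3$ let $v_i$ be the unique vector with $\det(v_0,v_i)=c_{0i}$ and $\det(v_1,v_i)=c_{1i}$ (such a vector exists and is unique because $v_0,v_1$ is a basis). Two checks then remain. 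First, $\det(v_2,v_3)=c_{23}$: expanding $v_2,v_3$ in the basis $v_0,v_1$, this is a one-line computation that collapses to exactly the Ptolemy relation~\eqref{eq:PtolemyRelation}. Second, $v_2\neq0$ and $v_3\neq0$: if $v_i=0$ then $c_{0i}=c_{1i}=0$, so the face $\{0,1,i\}$ would have two zero edges (recall $c_{01}\neq0$), contradicting the hypothesis. Any lifts $g_i$ of the $v_i$ then give the desired decoration.

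For uniqueness, suppose $(v_i)$ and $(v_i')$ both realize $(c_{ij})$. Since $c_{01}\neq0$, both $(v_0,v_1)$ and $(v_0',v_1')$ are bases of $\C^2$ with the same determinant $c_{01}$, so there is a unique $g\in\GL(2,\C)$ with $gv_0=v_0'$ and $gv_1=v_1'$, and necessarily $\det g=1$, i.e.\ $g\in\SL(2,\C)$. For $i=2,3$ the vectors $v_i$ and $g^{-1}v_i'$ have the same determinant against $v_0$ and the same determinant against $v_1$; since a vector in $\C^2$ is determined by its two determinants against a fixed basis, $g^{-1}v_i'=v_i$. Hence $(v_i')=g(v_i)$ and the two decorations coincide.

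The only genuinely load-bearing point --- the place where ``at most one zero edge per face'' is actually used --- is the nonvanishing of the reconstructed $v_2,v_3$ in the existence step; everything else is bookkeeping. The one subtlety worth checking carefully is that the choice of which nonzero edge is put in position $01$ (equivalently, the vertex relabelling) does not affect anything, which holds because the three-term Plücker relation, i.e.\ the Ptolemy relation~\eqref{eq:PtolemyRelation}, is invariant up to sign under permutations of the four indices.
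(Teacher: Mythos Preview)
Your proof is correct and slightly cleaner than the paper's, though the underlying idea is the same: both realize the decoration as a $4$-tuple of nonzero vectors in $\C^2$ with prescribed $2\times 2$ minors. The difference is in the construction and in the uniqueness argument. The paper reorders so that an entire Hamiltonian path $c_{01},c_{12},c_{23}$ is nonzero and then builds the $g_i$ step by step via the Bruhat cocycle matrices $q(\cdot)$ and $x(\cdot)$; uniqueness is then deduced by invoking Corollary~\ref{cor:LocalOneToOne} together with Proposition~\ref{prop:ConcreteFormula}. You only need a single nonzero edge $c_{01}$, solve directly for $v_2,v_3$ by their determinants against the basis $v_0,v_1$, and get uniqueness by an elementary $\SL(2,\C)$-change-of-basis argument. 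Your route is self-contained and makes the role of the hypothesis transparent (it is used exactly to ensure $v_2,v_3\neq 0$); the paper's route has the advantage of producing the explicit Bruhat cocycle that is reused immediately afterwards in Corollary~\ref{cor:ChoppedCocycle}.
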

\begin{proof}
By reordering the vertices if necessary, we may assume that $c_{01}$, $c_{12}$ and $c_{23}$ are nonzero. Let
\begin{equation}
g_0=I,\quad g_1=q(c_{01}),\quad g_2=g_1x(\frac{c_{02}}{c_{01}c_{12}})q(c_{12}),\quad g_3=g_2x(\frac{c_{13}}{c_{12}c_{23}})q(c_{23}).
\end{equation}
The Ptolemy coordinates of the decoration $(g_0P,g_1P,g_2P,g_3P)$ then agree with $c$ as is shown by explicitly computing $\det\big(g_{i}\left(\begin{smallmatrix}1\\0\end{smallmatrix}\right),g_{j}\left(\begin{smallmatrix}1\\0\end{smallmatrix}\right)\big)$. Uniqueness follows from Corollary~\ref{cor:LocalOneToOne} since the natural cocycle is determined up to coboundaries by the Ptolemy coordinates (Proposition~\ref{prop:ConcreteFormula}).
\end{proof}

Let $C(E)$ be a union of disjoint cylindrical neighborhoods of the zero-edges of $E$. The space $M\setminus C(E)$ decomposes as a union of truncated simplices with the zero-edges being ``chopped'' (see Figure~\ref{fig:ChoppedCocycle}).  
\begin{corollary}\label{cor:ChoppedCocycle}
A Ptolemy assignment $c$ of type $E$ canonically determines a $G$-cocycle on the space $M\setminus C(E)$.
\end{corollary}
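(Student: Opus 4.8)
The plan is to assemble the desired $G$-cocycle on $M\setminus C(E)$ by gluing together local data on each truncated simplex, using Proposition~\ref{prop:RecoverDec} to get the local pieces and Proposition~\ref{prop:ConcreteFormula} to see that the pieces agree on overlaps. First I would observe that since $E$ is mildly degenerate, every face of $\T$ has at most one zero-edge, so Proposition~\ref{prop:RecoverDec} applies to each simplex $\Delta_k$ individually: the restriction of $c$ to $\Delta_k$ is the Ptolemy assignment of a unique decoration $(g_{0,k}P,\dots,g_{3,k}P)$, and by Corollary~\ref{cor:LocalOneToOne} this decoration carries a Bruhat cocycle $\tau_k$ on the truncated simplex $\overline\Delta_k$, unique up to coboundaries preserving it. The zero-edges of $E$ are exactly the long edges of $\overline\Delta_k$ labelled by diagonal (rather than counter-diagonal) matrices, so chopping those edges — i.e. working on $\overline\Delta_k$ minus the cylindrical neighborhood $C(E)$ — removes precisely the long edges whose labels are not canonically determined. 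What remains is a labeling of the oriented edges of the chopped truncated simplex satisfying conditions \ref{itm:1} and \ref{itm:2} of Definition~\ref{def:NaturalCocycle}, i.e. a $G$-cocycle on that piece.

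The key step is to check that these local $G$-cocycles glue: whenever two truncated simplices are identified along a (hexagonal) face, the edge labels coming from the two sides must agree on the shared edges. This is where Proposition~\ref{prop:ConcreteFormula} does the work. The edges surviving the chopping are: the counter-diagonal long edges $\alpha_{ij}=q(c_{ij})$ (defined whenever $c_{ij}\neq0$), and the short edges $\beta^k_{ij}=x(c_{ij}/(c_{ik}c_{kj}))$ (defined whenever $c_{ik}c_{kj}\neq0$). Both formulas depend only on Ptolemy coordinates of edges lying on the identified face, and the identification relations force those coordinates to coincide on the two sides; hence the labels match. One must also note that on a face of $\T$, since at most one of its three edges is a zero-edge, the short edge $\beta^k_{ij}$ at the vertex opposite the possible zero-edge always has $c_{ik}c_{kj}\neq0$, so every short edge of the chopped decomposition that survives is in fact assigned a label by the formula; the short edges running along a chopped long edge are the ones we have removed together with that long edge. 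Assembling the matching local cocycles over the gluing then yields a well-defined $G$-cocycle on all of $M\setminus C(E)$.

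The main obstacle I anticipate is bookkeeping rather than conceptual: one must be careful about \emph{which} edges and faces of the truncated decomposition actually lie in $M\setminus C(E)$ after the zero-edges are chopped, and verify that the ambiguity in each $\tau_k$ (coboundaries preserving it, supported near the chopped long edges) does not affect the surviving labels — equivalently, that the surviving labels are exactly the \emph{canonically determined} ones in the sense of the discussion preceding Proposition~\ref{prop:ConcreteFormula}. Once that is set up, consistency of the gluing is immediate from the explicit formulas and the identification relations, and well-definedness of the global object follows because $C(E)$ is a disjoint union of cylindrical neighborhoods, so $M\setminus C(E)$ inherits a genuine cell decomposition by chopped truncated simplices on which the local cocycles have been shown to agree.
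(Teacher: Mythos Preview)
Your approach is essentially the same as the paper's: obtain a local Bruhat cocycle on each truncated simplex via Proposition~\ref{prop:RecoverDec} and Corollary~\ref{cor:LocalOneToOne}, use Proposition~\ref{prop:ConcreteFormula} to identify which labels are canonical, and then check gluing via the identification relations.

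One correction, however: you have the canonicity backwards. By Proposition~\ref{prop:ConcreteFormula}, \emph{all} long edges are canonically determined by the Ptolemy coordinates --- even the diagonal ones, via the third formula $\alpha_{ij}=-d(c_{ik}^{-1}c_{kj})$ when $c_{ij}=0$ and $c_{ik}c_{kj}\neq 0$ (which holds since $E$ is mildly degenerate). It is the \emph{short} edges adjacent to a zero long edge that are only determined up to the coboundary action: these are the $\beta^k_{ij}$ with $c_{ik}c_{kj}=0$, precisely the case Proposition~\ref{prop:ConcreteFormula} does not cover. Chopping out the cylindrical neighborhood of a zero-edge removes those ambiguous short edges (along with the zero long edge itself, whose label was actually fine), and what survives on the chopped truncated simplex is exactly the canonically determined data. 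With this fix your gluing argument goes through unchanged, since the formulas for the surviving $\alpha_{ij}$ and $\beta^k_{ij}$ depend only on Ptolemy coordinates of the shared face and hence match across identifications.
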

\begin{proof}
By Proposition~\ref{prop:RecoverDec} and Corollary~\ref{cor:LocalOneToOne}, $c$ determines up to coboundaries a Bruhat cocycle on each truncated simplex. By Proposition~\ref{prop:ConcreteFormula}, all the long edges are canonically determined by the Ptolemy coordinates, but a short edge near a zero-edge is only determined up to the coboundary action. However, as shown in Figure~\ref{fig:ChoppedCocycle}, each chopped truncated simplex inherits canonical edge labelings. The fact that the chopped cocycles match up follows from the identification relations.
\end{proof}

\begin{figure}[htb]
\hspace{0.5cm}\begin{minipage}[b]{0.55\textwidth}
\scalebox{0.75}{\input{figures_gen/RemoveCylinder.tex}}
\end{minipage}
\hspace{1cm}
\begin{minipage}[b]{0.3\textwidth}
\scalebox{0.85}{\input{figures_gen/RemoveCylinderTop.tex}}
\end{minipage}\\
\begin{minipage}[t]{0.5\textwidth}
\caption{Natural cocycle after removing a neigborhood of a zero-edge.}\label{fig:ChoppedCocycle}
\end{minipage}
\begin{minipage}[t]{0.46\textwidth}
\caption{The cocycle extends if and only if the edge relation holds.}\label{fig:RemoveCylinder}
\end{minipage}
\end{figure}

The link of each zero-edge is a complex $K$ as in Section~\ref{sec:EdgeRelations} (embedded in $\Mtildehat$) and since $E$ is only edge-degenerate, the top and bottom edges of $K$ are all nonzero. Hence, for a Ptolemy assignment of type $E$ the edge relations (Definition~\ref{def:EdgeRelation}) around each zero-edge are well defined. 

\begin{definition}\label{def:PtolemyMild} Let $E$ be edge-degenerate. The Ptolemy variety $P(\T,E)$ is the quasi-affine algebraic set defined by the usual relations (as in Definition~\ref{def:PtolemyVariety}) together with the edge relations~\eqref{eq:EdgeRelations} around zero-edges and the relations $c_e=0$ if $e$ is a zero-edge and $c_e\neq 0$ otherwise. 
\end{definition}
Note that each element in $P(\T,E)$ is a Ptolemy assignment of type $E$.

\begin{theorem}\label{thm:OneOneMild}
Let $E\in \TE(\T)$ be edge-degenerate. There is a one-to-one correspondence
\begin{equation}\label{eq:OneOneMild}
\left\{\txt{Decorated, boundary-unipotent\\representations of type $E$}\right\}\overset{\text{1:1}}{\longleftrightarrow}P(\T,E)\overset{\text{1:1}}{\longleftrightarrow}\left\{\txt{Bruhat cocycles of type $E$\\ up to coboundaries}\right\}
\end{equation}
\end{theorem}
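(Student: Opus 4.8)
The plan is to prove first the correspondence $P(\T,E)\leftrightarrow\{\text{Bruhat cocycles of type }E\text{ up to coboundaries}\}$, and then to obtain the left-hand correspondence by restricting Theorem~\ref{thm:OneToOneGeneral} to type $E$ (the correspondence there preserves type). Throughout, ``up to coboundaries'' is read as up to the $P$-valued coboundary action that keeps the cocycle a Bruhat cocycle of type $E$; an elementary computation with $q(b)$ and $d(b)$ shows that such coboundaries fix every long edge and are constrained to a one-parameter family along each diagonal long edge, so this matches ``coboundaries preserving $\tau$'' in Corollary~\ref{cor:LocalOneToOne} and Theorem~\ref{thm:OneToOneGeneral}.

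For the map $P(\T,E)\to\{\text{Bruhat cocycles}\}$, start from $c\in P(\T,E)$, which is automatically a Ptolemy assignment of type $E$. Corollary~\ref{cor:ChoppedCocycle} produces a canonical $G$-cocycle on $M\setminus C(E)$; by Proposition~\ref{prop:ConcreteFormula} its long edges are $q(c_{ij})$ on the nonzero edges and $-d(c_{ik}^{-1}c_{kj})$ on the zero-edges, so away from $C(E)$ it already has the required Bruhat shape. It remains to extend it over each cylinder $C(e)$ around a zero-edge $e$. The link of $e$ is a complex $K$ as in Section~\ref{sec:EdgeRelations}, whose top and bottom edges are nonzero because $E$ is only mildly degenerate; by the discussion following Definition~\ref{def:EdgeRelation} (see Figure~\ref{fig:RemoveCylinder}) the chopped cocycle extends across $e$ precisely when the product of the short edges around the top circle of $\overline K$ equals $I$, which is one of the two equivalent forms of the edge relation around $e$ (Lemma~\ref{lemma:edgeRelationEquivalence}, Figure~\ref{fig:EdgeRelationTop}), hence holds for $c\in P(\T,E)$. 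The resulting cocycle is a Bruhat cocycle of type $E$, well-defined up to the coboundary ambiguity near the zero-edges.

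Conversely, a Bruhat cocycle $\tau$ of type $E$ determines a Ptolemy assignment $c$ by $\alpha_{ij}=q(c_{ij})$ on the nonzero edges and $c_e=0$ on the zero-edges. The identification relations hold because identified long edges carry equal labels. On each truncated simplex, the analogue of Corollary~\ref{cor:LocalOneToOne} realizes $\tau$ as coming from a decoration $(g_0P,\dots,g_3P)$, whose Ptolemy coordinates $\det(v_i,v_j)$ satisfy the Ptolemy relation~\eqref{eq:PtolemyRelation} by Remark~\ref{rm:NonGenericDecoration} and coincide with the $c_{ij}$ above by Proposition~\ref{prop:ConcreteFormula}; thus the Ptolemy relations hold. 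The edge relation around a zero-edge $e$ is precisely the cocycle condition of $\tau$ around the top of the link of $e$ ($\prod x(d_k)=I$, i.e.\ $\sum d_k=0$), and $c$ has type $E$ by definition of the type of $\tau$, so $c\in P(\T,E)$. The two constructions are mutually inverse up to coboundaries: the cocycle built from $c$ has its long edges, and hence (via Proposition~\ref{prop:ConcreteFormula}) all of $c$, determined by $c$; and the long edges of $\tau$ are recovered verbatim, so $\tau$ is recovered up to the short-edge coboundary freedom.

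Composing with Theorem~\ref{thm:OneToOneGeneral} restricted to type $E$ then gives the bijection with decorated boundary-unipotent representations of type $E$; unwinding the maps, a decoration $(g_0P,g_1P,g_2P,g_3P)$ is sent to the Ptolemy coordinates $c_{ij}=\det(v_i,v_j)$. The step I expect to be the main obstacle is the extension across the zero-edges: one must check carefully, using the explicit chopped cocycle of Corollary~\ref{cor:ChoppedCocycle}, that the edge relation is exactly the obstruction to re-gluing the truncated simplices over a zero-edge, and that once it holds the extension is unique up to the allowed coboundary freedom, so that distinct Ptolemy assignments in $P(\T,E)$ yield non-equivalent cocycles. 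The remaining bookkeeping (identification relations, the Ptolemy relation on simplices carrying a zero-edge, and the identification of the two notions of coboundary) is routine given Propositions~\ref{prop:RecoverDec} and~\ref{prop:ConcreteFormula}.
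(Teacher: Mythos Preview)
Your proposal is correct and follows essentially the same route as the paper: reduce to showing $P(\T,E)\leftrightarrow\{\text{Bruhat cocycles of type }E\}/\text{coboundaries}$ via Theorem~\ref{thm:OneToOneGeneral}, build the chopped cocycle on $M\setminus C(E)$ from Corollary~\ref{cor:ChoppedCocycle}, and identify the edge relation as exactly the obstruction to extending over each cylinder (the paper phrases this step via the van~Kampen theorem), with the residual coboundary freedom accounting for the ambiguity. Your explicit treatment of the inverse map (reading off Ptolemy coordinates from a Bruhat cocycle and verifying the relations) is more detailed than the paper's proof, which leaves that direction implicit in the appeal to Theorem~\ref{thm:OneToOneGeneral}, but the underlying argument is the same.
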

\begin{proof}
By Theorem~\ref{thm:OneToOneGeneral} all we need to prove is that a Ptolemy assignment of type $E$ determines a Bruhat cocycle. By Corollary~\ref{cor:ChoppedCocycle}, we have a canonical cocycle on $M\setminus C(E)$. By the van Kampen theorem this extends to a Bruhat cocycle on $M$ if and only if the product of the labelings around each cylinder is $I$, which is a consequence of the edge relations. The freedom in the choice of extension is exactly the coboundary action (see e.g.~Remark~\ref{rm:CoboundaryAction}).
\end{proof}

\begin{definition}
For $\sigma\in H^2(\widehat M;\Z/2\Z)$ define $P^\sigma(\T,E)$ as in Definition~\ref{def:PtolemyMild}, but using the identification relations~\eqref{eq:EdgeIdentificationsSigma}.
\end{definition}

\begin{theorem}\label{thm:OneOneMildSigma}
Let $E\in \TE(\T)$ be edge-degenerate. There is a one-to-one correspondence
\begin{equation}\label{eq:OneOneMildSigma}
\cxymatrix{{
\left\{\txt{Type $E$ decorated\\boundary-unipotent\\$\pi_1(M)\to\PSL(2,\C)$}\right\}\ar@/_1pc/@{<->}[rr]_-{1:1}&P^\sigma(\T,E)\ar[l]_-{1:k}\ar[r]^-{k:1}&\left\{\txt{Bruhat $\PSL(2,\C)$-cocycles\\type $E$, obstruction class $\sigma$\\up to coboundaries}\right\},}}
\end{equation}
\end{theorem}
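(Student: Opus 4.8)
The plan is to follow the proof of Theorem~\ref{thm:OneOneMild} almost line for line, replacing $\SL(2,\C)$ by $\PSL(2,\C)$ and the ordinary identification relations by the $\sigma$-modified ones~\eqref{eq:EdgeIdentificationsSigma}. The starting point is the $\PSL(2,\C)$ analogue of Theorem~\ref{thm:OneToOneGeneral} (stated in the excerpt): decorated boundary-unipotent $\PSL(2,\C)$-representations correspond one-to-one with $\PSL(2,\C)$ Bruhat cocycles up to coboundaries preserving them. So, exactly as in the proof of Theorem~\ref{thm:OneOneMild}, the problem reduces to matching up $\PSL(2,\C)$ Ptolemy assignments of type $E$ with obstruction class $\sigma$ and $\PSL(2,\C)$ Bruhat cocycles of type $E$ with obstruction class $\sigma$, and to tracking the multiplicities.

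First I would record the $\PSL(2,\C)$, $\sigma$-twisted versions of Proposition~\ref{prop:RecoverDec}, Corollary~\ref{cor:LocalOneToOne}, Proposition~\ref{prop:ConcreteFormula} and Corollary~\ref{cor:ChoppedCocycle}. On a single simplex nothing changes: Bruhat decomposition and the explicit formulas of Proposition~\ref{prop:ConcreteFormula} make sense verbatim in $\PSL(2,\C)$, since the matrices $x(a)$, $q(b)$, $d(b)$ all have well-defined images there. The only genuinely new point is global: passing between a $\PSL(2,\C)$-cocycle and $\SL(2,\C)$-valued Ptolemy coordinates requires choosing $\SL(2,\C)$-lifts of projected matrix entries, and the consistency of such a choice across the faces of $\T$ is governed by a $1$-cochain $\eta$ on $\widehat M$ with $\delta\eta=\sigma$ --- which is exactly the data $\{\eta_k\}$ fixed in the definition of $P^\sigma(\T,E)$. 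Concretely, a $\PSL(2,\C)$ Ptolemy assignment of type $E$ obeying~\eqref{eq:EdgeIdentificationsSigma} induces on each chopped truncated simplex of $M\setminus C(E)$ a canonical $\PSL(2,\C)$-labeling as in Figure~\ref{fig:ChoppedCocycle}, and the modified identification relations are precisely what makes these labelings agree on shared faces, producing a $\PSL(2,\C)$-cocycle on $M\setminus C(E)$.

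Next, as in Theorem~\ref{thm:OneOneMild}, I would apply the van Kampen theorem: this cocycle extends to a $\PSL(2,\C)$ Bruhat cocycle on all of $M$ if and only if the product of the labelings around each removed cylinder is trivial in $\PSL(2,\C)$ (see Figure~\ref{fig:RemoveCylinder}). Since $E$ is mildly degenerate, the top and bottom edges of the complex $K$ around each zero-edge are nonzero, so this condition is exactly the $\PSL(2,\C)$ edge relation~\eqref{eq:TopBottomEdgeEq} around that zero-edge, which is imposed in $P^\sigma(\T,E)$; the freedom in the extension is the coboundary action (cf.~Remark~\ref{rm:CoboundaryAction}). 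Passing the resulting Bruhat cocycle through the $\PSL(2,\C)$ analogue of Theorem~\ref{thm:OneToOneGeneral} yields the type-$E$ decorated representation, closing the triangle. The multiplicity $k=\big\vert H^1(\widehat M;\Z/2\Z)\big\vert$ then arises for the same reason as in the fundamental correspondence~\eqref{eq:FundamentalCorrespondencePSL}: a $\PSL(2,\C)$ Bruhat cocycle of type $E$ lifts to a point of $P^\sigma(\T,E)$ only after a choice of $\SL(2,\C)$-signs, and the sign choices that respect the edge-orientation relations, the relations~\eqref{eq:EdgeIdentificationsSigma}, and the vanishing of the zero-edge coordinates form, on the relevant level, a group of order $k$; likewise in the other direction.

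The geometric input --- Bruhat decomposition, the chopped-cocycle construction, van Kampen, and the edge relations --- is identical to the $\SL(2,\C)$ case, so I expect the only real work, and the main obstacle, to be the $\Z/2\Z$-cohomological bookkeeping: checking carefully that the lifting ambiguity between $\PSL(2,\C)$-cocycles and $\SL(2,\C)$-valued Ptolemy coordinates, constrained by~\eqref{eq:EdgeIdentificationsSigma} and by the zero-edges, is exactly $H^1(\widehat M;\Z/2\Z)$ and that it interacts correctly with the coboundary action, so that the two maps out of $P^\sigma(\T,E)$ are genuinely $k$-to-one. This should be a routine but slightly delicate adaptation of the argument behind~\eqref{eq:FundamentalCorrespondencePSL}.
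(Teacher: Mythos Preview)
Your approach is essentially the paper's: reduce to the proof of Theorem~\ref{thm:OneOneMild} via the $\PSL(2,\C)$ analogue of Theorem~\ref{thm:OneToOneGeneral}, then do the $\Z/2\Z$ bookkeeping. There is, however, one genuine error in the bookkeeping. The multiplicity $k$ in this (unreduced) statement is $\big\vert Z^1(\widehat M;\Z/2\Z)\big\vert$, not $\big\vert H^1(\widehat M;\Z/2\Z)\big\vert$; this is exactly the $z$ appearing in~\eqref{eq:FundamentalCorrespondencePSL}, which you yourself cite. Two points of $P^\sigma(\T,E)$ give the same $\PSL(2,\C)$ Bruhat cocycle if and only if their Ptolemy coordinates differ by signs $\varepsilon_e\in\{\pm1\}$ that are consistent across the identifications~\eqref{eq:EdgeIdentificationsSigma}, i.e.\ by a $\Z/2\Z$-valued $1$-\emph{cocycle} on $\widehat M$, not merely a cohomology class. (Your description ``the sign choices \ldots\ form, on the relevant level, a group of order $k$'' is correct, but that group is $Z^1$, not $H^1$.) The cohomology group $H^1$ only enters after passing to the \emph{reduced} variety, since coboundaries are absorbed by the diagonal action; that is the content of the remark following~\eqref{eq:FundamentalCorrespondencePSL} and is what feeds into Theorem~\ref{thm:MainThmPSL}.
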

\begin{proof}
As in the proof of Theorem~\ref{thm:OneOneMild} an element in $P^\sigma(\T,E)$ determines a Bruhat cocycle up to coboundaries. The fact that this map is $k:1$ follows from the elementary fact that two Ptolemy assignments determine the same Bruhat cocycle if and only if they differ by an element in $Z^1(\widehat M;\Z/2\Z)$.
\end{proof}

\begin{definition}
For $M$ as in Section~\ref{sec:Enhanced} define the enhanced Ptolemy variety $\E P(\T,E)$ as in Definition~\ref{def:PtolemyMild} using the identification relations from Definition~\ref{def:EnhancedPtolemy} and the edge relations~\eqref{eq:EdgeEquationsEnhanced}.
\end{definition}

\begin{theorem}\label{thm:OneOneMildEnhanced}
Let $E\in \TE(\T)$ be edge-degenerate and $M$ as in Section~\ref{sec:Enhanced}. There is a one-to-one correspondence
\begin{equation}\label{eq:OneOneMildEnhanced}
\left\{\txt{Decorated representations\\of type $E$}\right\}\overset{\text{1:1}}{\longleftrightarrow}\E P(\T,E)\overset{\text{1:1}}{\longleftrightarrow}\left\{\txt{Fattened Bruhat cocycles of type $E$\\ up to coboundaries}\right\}
\end{equation}
\end{theorem}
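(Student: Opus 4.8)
The plan is to transcribe the proof of Theorem~\ref{thm:OneOneMild} almost word for word, substituting the fattened decomposition for $\overline\T$, fattened Bruhat cocycles for Bruhat cocycles, and the fattened analogue of Theorem~\ref{thm:OneToOneGeneral} for Theorem~\ref{thm:OneToOneGeneral} itself. Since that fattened correspondence preserves the partition type (a long edge of a fattened Bruhat cocycle is diagonal exactly when the associated Ptolemy coordinate vanishes), it reduces the claim to exhibiting, for each enhanced Ptolemy assignment $c$ of type $E$, a fattened Bruhat cocycle of type $E$ that is unique up to the coboundary action and recovers $c$.

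First I would establish the enhanced analogues of Proposition~\ref{prop:RecoverDec} and Corollary~\ref{cor:ChoppedCocycle}. Locally, on a simplex in which each face has at most one vanishing Ptolemy coordinate (the mildly degenerate condition), the formulas of Proposition~\ref{prop:ConcreteFormula} already pin down all long edges and all short edges except those adjacent to a zero-edge; in the fattened picture one additionally labels each face-pairing edge by the diagonal matrix $d(t_k)$, $d(b_k)$ or $d(h_k)$ determined by the monomial occurring in the corresponding enhanced identification relation (Definition~\ref{def:EnhancedPtolemy}; cf.~Figure~\ref{fig:FatEdgeNbh}). As in Corollary~\ref{cor:ChoppedCocycle}, a short edge next to a zero-edge is only determined up to a coboundary of the whole truncated simplex, but every chopped truncated simplex in $M\setminus C(E)$ inherits a canonical labeling, and the enhanced identification relations force these chopped labelings to agree across faces. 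This produces a canonical $G$-cocycle on the fattened, chopped decomposition of $M\setminus C(E)$.

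Next, by van Kampen this cocycle extends to a fattened Bruhat cocycle on $M$ if and only if, for each zero-edge $e$, the holonomy around the corresponding cylinder $\partial C(e)$ is $I$. The link of $e$ is a complex $K$ as in Section~\ref{sec:EdgeRelations}, whose top and bottom edges are nonzero because $E$ is mildly degenerate; near the top, the holonomy is the product $\prod_{k=0}^{N-1} x(d_k)\,d(t_{k+1})$ with $d_k=c_{23,k}/(c_{12,k}c_{13,k})$ (Figure~\ref{fig:FatEdgeRelationTop}). Using $d(t)\,x(a)\,d(t)^{-1}=x(t^{2}a)$ to push all diagonal factors to the right, this product equals $x\!\bigl(\sum_{k=0}^{N-1} d_k\prod_{j=1}^{k} t_j^{2}\bigr)\,d\!\bigl(\prod_{j=1}^{N} t_j\bigr)$; the monomials satisfy $\prod_{j=1}^{N}t_j=1$ around the closed link, so the product is $I$ exactly when $\sum_{k=0}^{N-1} d_k\prod_{j=1}^{k} t_j^{2}=0$, i.e.~the edge relation~\eqref{eq:EdgeEquationsEnhanced}; the bottom of $K$ gives the equivalent relation with the $b_k$. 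The freedom in the extension is precisely the coboundary action (Remark~\ref{rm:CoboundaryAction}), so the assignment sending $c$ to its fattened Bruhat cocycle descends to a bijection between $\E P(\T,E)$ and fattened Bruhat cocycles of type $E$ modulo coboundaries, and composing with the fattened analogue of Theorem~\ref{thm:OneToOneGeneral} finishes the proof.

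The main obstacle is the bookkeeping hidden in the middle steps: identifying exactly which diagonal matrix labels each chopped face-pairing edge in terms of the enhancement monomials, checking that these labelings are consistent across faces, and verifying both that $\prod_{j=1}^N t_j=1$ around the link (so that the diagonal part of the holonomy is automatically trivial) and that the surviving unipotent part is literally the left-hand side of~\eqref{eq:EdgeEquationsEnhanced}. This is the ``elementary modification'' alluded to after~\eqref{eq:EdgeEquationsEnhanced}; once it is in place, every other step is an unchanged copy of the boundary-unipotent argument.
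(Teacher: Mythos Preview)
Your proposal is correct and follows exactly the approach the paper takes: the paper's own proof is a single sentence stating that the argument is identical to that of Theorem~\ref{thm:OneOneMild}, using the fattened decomposition and the geometric interpretation of the enhanced edge relations~\eqref{eq:EdgeEquationsEnhanced} from Figure~\ref{fig:FatEdgeRelationTop}. You have simply unpacked that sentence, including the verification that the cylinder holonomy factors as a unipotent times a diagonal and that the diagonal part $d(\prod_j t_j)$ is automatically trivial; this is precisely the ``elementary modification'' the paper leaves to the reader.
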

\begin{proof}
The proof is identical to that of Theorem~\ref{thm:OneOneMild} using the fattened decomposition and the geometric interpretation of the edge relations~\eqref{eq:EdgeEquationsEnhanced} given in Figure~\ref{fig:FatEdgeRelationTop}.
\end{proof}

\subsection{Embeddings in affine space}
\begin{definition}\label{def:VirtualEdges}
A \emph{virtual edge} of $M$ is an element in the set
\begin{equation}
V(M) =\big(I(\widetilde M)\times I(\widetilde M)\big)\big/\pi_1(M)
\end{equation}
of pairs of ideal points in $\widetilde M$ up to the $\pi_1(M)$-action. 
\end{definition}
Given an ideal triangulation $\T$, each oriented edge $e$ of $\T$ determines a virtual edge, namely the orbit of $(e_0,e_1)$. 
Hence, $\T$ determines a (finite) subset $V(\T)$ of $V(M)$.

For each subset $V$ of $V(M)$ let $A(V)$ denote the affine space with coordinate ring equal to the polynomial ring generated by $V$.  For $V\subset W$ we have a canonical projection $\pi\colon A(W)\to A(V)$ onto a direct summand. 
We wish to embed the Ptolemy variety $P(\T,E)$ in $A(V(M))$. The idea is that a decoration assigns a Ptolemy coordinate to every virtual edge, not just the edges of $\T$.

\begin{proposition}\label{prop:Embeddings} Let $E\in\TE(\T)$ be non- or edge-degenerate. There is a canonical embedding of $P(\T,E)$ in the (infinite dimensional) affine space $A(V(M))$. Furthermore, for each finite subset $W$ of $V(M)$ containing $V(\T)$ the map
\begin{equation}
\xymatrix{P(\T,E)\ar@{^{(}->}[r]&A(V(M))\ar[r]^-\pi&A(W)}
\end{equation} 
is an embedding of $P(\T,E)$ in the finite dimensional affine space $A(W)$.
\end{proposition}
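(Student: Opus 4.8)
The plan is to construct the embedding of $P(\T,E)$ into $A(V(M))$ by using the decoration attached to a point of $P(\T,E)$ to define a coordinate on \emph{every} virtual edge, not just the edges of $\T$, and then to check that this assignment is algebraic, injective, and has algebraic inverse on a Zariski neighborhood (so that the image of $P(\T,E)$ in $A(W)$ is closed in $P(\T,E)$'s topology, i.e.\ the map is an embedding of quasi-affine varieties). First I would recall from Theorem~\ref{thm:OneOneMild} that a point $c\in P(\T,E)$ corresponds bijectively to a decorated boundary-unipotent representation $(\rho,D)$ with $D\colon I(\widetilde M)\to\SL(2,\C)/P$. Writing $v_i=g_i\Vector{1}{0}$ for a lift of $D$, a virtual edge $[(x,y)]\in V(M)$ is an orbit of an ordered pair of ideal points, and I set $c_{[(x,y)]}=\det(v_x,v_y)$. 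This is well-defined: changing the global lift $g\mapsto hg$ leaves $\det(v_x,v_y)$ invariant since $h\in\SL(2,\C)$, and moving $(x,y)$ by $\gamma\in\pi_1(M)$ multiplies both $v_x$ and $v_y$ by $\rho(\gamma)$, again leaving the determinant fixed. For an oriented edge $e$ of $\T$ this recovers exactly the Ptolemy coordinate $c_e$ of Figure~\ref{fig:FundamentalCorrespondence}, so the map $P(\T,E)\to A(V(M))$ extends the tautological inclusion of coordinates indexed by $V(\T)$.

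Next I would verify the map is a \emph{morphism}: each coordinate $c_{[(x,y)]}$ is a polynomial (in fact with $\Z$ coefficients) in the coordinates indexed by $V(\T)$. This is the content of Proposition~\ref{prop:RecoverDec} together with the explicit formulas of Proposition~\ref{prop:ConcreteFormula}: since $E$ is non- or mildly degenerate, every face of $\T$ has at most one zero-edge, so the proof of Proposition~\ref{prop:RecoverDec} reconstructs the cosets $g_i$ of any simplex as explicit words in $x(\cdot)$ and $q(\cdot)$ with entries rational in the $c_e$, $e\in V(\T)$. Given an arbitrary virtual edge $[(x,y)]$, choose a path in $\widetilde M$ from a simplex containing a lift of $x$ to one containing a lift of $y$; composing the corresponding Bruhat-cocycle transition matrices (Section~\ref{sec:BruhatCocycle}) expresses $v_x$ and $v_y$, hence $\det(v_x,v_y)$, as a Laurent polynomial in the $c_e$. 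A priori this is only a Laurent polynomial, but the transitivity argument of Lemma~\ref{lemma:TransitiveDec} (linear dependence of vectors is transitive) shows that no spurious denominators survive: the value is genuinely regular on all of $P(\T,E)$, since it is a $\det$ of vectors all of which are defined everywhere on the variety.

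It remains to show that, for finite $W\supseteq V(\T)$, the composite $P(\T,E)\hookrightarrow A(V(M))\xrightarrow{\pi}A(W)$ is an embedding. Injectivity on points is immediate because $W\supseteq V(\T)$ and the coordinates indexed by $V(\T)$ already determine the point (they are precisely the defining coordinates of $P(\T,E)$ in Definition~\ref{def:PtolemyMild}); so the restriction of $\pi$ to the image is injective, and in fact has a regular inverse given by further projection $A(W)\to A(V(\T))$ composed with the isomorphism of $P(\T,E)$ onto its image in $A(V(\T))$. Hence the map is a closed immersion onto its image in the Zariski topology of $A(W)$, and an isomorphism onto that image as a quasi-affine variety, which is the assertion. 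For the infinite-dimensional statement one either works with $A(V(M))=\Spec$ of the (infinitely generated) polynomial ring and runs the same argument, or simply notes $A(V(M))=\varprojlim_W A(W)$ and the compatible system of embeddings just constructed induces an embedding into the limit.

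The main obstacle I anticipate is the regularity (as opposed to mere rationality) of the extended coordinates $c_{[(x,y)]}$ on the locus where some $c_e=0$: the naive formula via Bruhat transition matrices introduces $c_e^{-1}$ factors, and one must argue — using transitivity of linear dependence exactly as in Lemma~\ref{lemma:TransitiveDec}, together with the edge relations built into $P(\T,E)$ — that these cancel, so that each $c_{[(x,y)]}$ is a genuine polynomial and the image of $P(\T,E)$ lands in, and is closed in, the relevant affine space. Everything else (well-definedness, agreement with $c_e$ on $V(\T)$, injectivity, existence of a regular inverse) is routine once this regularity is in hand.
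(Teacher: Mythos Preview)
Your approach is essentially the same as the paper's, and the second statement (embedding in $A(W)$ for finite $W\supseteq V(\T)$) is handled identically. The difference lies in how you treat regularity of the extended coordinates, and here your final paragraph misdiagnoses the obstacle.

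You worry that the formulas for $c_{[(x,y)]}$ involve $c_e^{-1}$ factors that must \emph{cancel} to yield honest polynomials, and you invoke transitivity of linear dependence and the edge relations to force this. But no cancellation is needed, and these are not the right tools. The coordinate ring $\mathcal O$ of $P(\T,E)$ already contains $c_e^{-1}$ for every \emph{nonzero} edge $e$ (that is precisely what the condition $c_e\neq 0$ in Definition~\ref{def:PtolemyMild} buys), so Laurent polynomials in the nonzero $c_e$'s are automatically regular on $P(\T,E)$. The only danger is introducing $c_e^{-1}$ for a \emph{zero}-edge. The paper avoids this entirely by invoking Corollary~\ref{cor:ChoppedCocycle}: the canonical cocycle on the chopped space $M\setminus C(E)$ has entries in $\SL(2,\mathcal O)$ by construction, because the zero-edges have been excised. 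One then joins $e_0$ to $e_1$ by a path in $\widetilde{M\setminus C(E)}$ (not in $\widetilde M$), takes the product $B_e\in\SL(2,\mathcal O)$, and extracts $c_e=\det\big(\Vector{0}{1},B_e\Vector{0}{1}\big)\in\mathcal O$. This is well defined because $B_e$ is determined up to left and right multiplication by $P$, and manifestly regular because it was never anything other than an element of $\mathcal O$.

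Two smaller points. First, your phrase ``since it is a $\det$ of vectors all of which are defined everywhere on the variety'' is not a rigorous argument: the vectors $v_x$ are only defined up to a global $\SL(2,\C)$, so they are not themselves regular functions; one must produce an explicit element of $\mathcal O$. Second, your claim that each $c_{[(x,y)]}$ is a polynomial with $\Z$-coefficients in the $c_e$ is too strong; it is a Laurent polynomial in the nonzero $c_e$'s, hence regular on $P(\T,E)$, but generally not a polynomial.
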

\begin{proof}
By definition, $P(\T,E)$ is a quasi-affine subset of $A(V(\T))$. To embed $P(\T,E)$ in $A(V(M))$, we must construct a surjective map from the coordinate ring $\mathcal O_{A(V(M))}$ of $A(V(M))$ to the coordinate ring $\mathcal O$ of $P(\T,E)$. 
By Corollary~\ref{cor:ChoppedCocycle} we have a canonical Bruhat cocycle on the space $M\setminus C(E)$ with edges labeled by matrices in $\SL(2,\mathcal O)$. This cocycle canonically lifts to a cocycle on $\widetilde{M\setminus C(E)}$. Given a virtual edge $e=(e_0,e_1)$, one obtains a matrix $B_e$ in $\SL(2,\mathcal O)$ by selecting a path in $\widetilde{M\setminus C(E)}$ joining $e_0$ to $e_1$. Up to left and right multiplication by elements in $P$ the matrix $B_e$ is independent of the choice of path, so the determinant $c_e=\det(\Vector{0}{1},B_e\Vector{0}{1})$ 
is well defined. Moreover, if $e$ corresponds to an edge of $\T$, $c_e$ is simply the Ptolemy coordinate of $e$. One thus obtains a surjection $\mathcal O_{A(V(M))}\to\mathcal O$ by taking a virtual edge $e$ to $c_e$.
The second statement is obvious.
\end{proof}

\begin{remark} The same result holds for $P^\sigma(\T,E)$. For $\E P(\T,E)$ one must consider embeddings into an affine space generated by pairs of triangular faces of $\widetilde{\overline\T}$ instead of virtual edges. 
\end{remark}

\subsection{Face-degenerate edge sets}
\begin{definition}\label{def:Descendant}
Let $\T'$ be a triangulation whose edge set contains that of $\T$. A \emph{descendant} of $E\in\TE(\T)$ is an element $E'\in\TE(\T')$ such that $E'$ agrees with $E$ on the edges of $\T$. If $E'\in\TE(\T')$ is a descendent of $E\in\TE(\T)$ we write $E'_{|\T}=E$.
\end{definition}
\begin{lemma} Let $E\in \TE(\T)$ be face-degenerate and suppose that there are $k$ degenerate faces. If $\T'$ is the triangulation obtained from $\T$ by performing a 2-3 move at each degenerate face then $E$ has $2^k$ descendants each of which is edge-degenerate.
\end{lemma}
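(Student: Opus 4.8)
The plan is to reduce everything to a local picture around a single degenerate face and then do the global bookkeeping. First I would record that a $2$-$3$ move deletes no edges, so the edge set of $\T'$ is that of $\T$ together with exactly one new edge per degenerate face, say $\epsilon_1,\dots,\epsilon_k$; consequently a descendant of $E$ is completely determined by the zero/nonzero labels it assigns to $\epsilon_1,\dots,\epsilon_k$. I would also check that the $k$ moves do not interfere: no simplex of $\T$ can be adjacent to two degenerate faces, since if a simplex $\Delta$ contained two degenerate faces, then the five edges of $\Delta$ lying on those two faces would be zero-edges, and transitivity applied to a third face would force the sixth edge to be a zero-edge as well, so $\Delta$ would be a degenerate simplex, contradicting moderate degeneracy. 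Hence the $k$ moves are supported on pairwise disjoint pairs of simplices and may be carried out independently, and $\epsilon_1,\dots,\epsilon_k$ are genuinely $k$ distinct new edges.

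\textbf{The key local observation.} Next I would prove the one piece of real content. Realize the two simplices meeting a degenerate face $f$ as a bipyramid with equatorial triangle $\{a,b,c\}=f$ and apexes $d$ and $e$; after the move the bipyramid is retriangulated by the three simplices $\{a,b,d,e\}$, $\{b,c,d,e\}$, $\{c,a,d,e\}$ sharing the new edge $de$ (this $de$ is the corresponding $\epsilon_i$). Since $f$ is degenerate, $ab$, $bc$, $ca$ are zero-edges, and I claim the six ``side'' edges $ad,bd,cd,ae,be,ce$ are all nonzero: if $ad$ were a zero-edge, transitivity on the faces $abd$ and $acd$ of the simplex $abcd$ (using that $ab$ and $ca$ are zero) would force $bd$ and $cd$ to be zero-edges too, making $abcd$ a degenerate simplex, a contradiction; the other five side edges are handled identically. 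It follows that the three new faces $\{a,d,e\}$, $\{b,d,e\}$, $\{c,d,e\}$ each contain two nonzero edges, so the transitivity condition puts no constraint whatsoever on the new edge $de$.

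\textbf{Counting and mild degeneracy.} With the local observation in hand the count is immediate. Any assignment of ``zero''/``nonzero'' to $\epsilon_1,\dots,\epsilon_k$, extended by $E$ on the edges of $\T$, is a transitive partition of $\T'$: every face of $\T'$ is either a surviving face of $\T$, on which transitivity is inherited from $E$ because the labels of its edges are unchanged, or a new face containing two nonzero edges, on which the transitivity condition is vacuous. Hence the descendants of $E$ are in bijection with $\{\text{zero},\text{nonzero}\}^k$, giving exactly $2^k$ of them. Finally I would verify that each descendant $E'$ is mildly degenerate: it has zero-edges (for instance the edge $ab$ of any degenerate face of $E$ survives and keeps its label), and it has no degenerate face, because the only degenerate faces of $E$ were the $k$ faces we subdivided and these no longer exist in $\T'$, a surviving face of $\T$ was non-degenerate in $E$ hence in $E'$, and every new face contains a nonzero edge; in particular no simplex of $\T'$ is degenerate.

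\textbf{Main obstacle.} I expect the only delicate point to be making the combinatorics of the $2$-$3$ move precise, namely keeping straight which faces and simplices are destroyed and which are created, and handling possible self-identifications of vertices or faces of $M$ around a degenerate face. None of this should affect the argument, since the transitivity condition is checked face by face and the local bipyramid picture above is insensitive to such identifications; the real mathematical input is simply the observation that the side edges of the bipyramid are forced to be nonzero by moderate degeneracy.
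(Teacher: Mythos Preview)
Your proof is correct and follows the same local bipyramid analysis as the paper, which simply asserts that the new edge can be either zero or nonzero and that none of the new faces are degenerate (referring to a figure). Your version is more thorough: you supply the argument that the side edges of the bipyramid must be nonzero, and you verify that no simplex of $\T$ borders two degenerate faces so that the $k$ moves are genuinely independent---a point the paper leaves implicit.
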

\begin{proof}
Performing a 2-3 move adds an edge which can be either zero or nonzero without violating transitivity. In either case, none of the new faces are degenerate (see Figure~\ref{fig:ModerateDescendant}). This concludes the proof.
\end{proof}

\begin{figure}[htb]
\hspace{0.8cm}
\begin{minipage}[b]{0.45\textwidth}
\scalebox{0.8}{\input{figures_gen/ModerateDescendant.tex}}
\end{minipage}
\hfill
\begin{minipage}[b]{0.45\textwidth}
\scalebox{0.8}{\input{figures_gen/WildDescendant.tex}}
\end{minipage}\\
\hspace{-0.5cm}
\begin{minipage}[t]{0.49\textwidth}
\caption{A 2-3 move, face-degenerate edge set.}\label{fig:ModerateDescendant}
\end{minipage}
\begin{minipage}[t]{0.49\textwidth}
\caption{A 2-3 move, simplex-degenerate edge set.}\label{fig:WildDescendant}
\end{minipage}
\end{figure}

\begin{definition}\label{def:PtolemyModerate} Let $E\in \TE(\T)$ be face-degenerate and let $\T'$ be the triangulation obtained from $\T$ by performing 2-3 moves between all degenerate faces. Define
\begin{equation}\label{eq:PtolemyVarietyModerate}
P(\T,E)=\bigcup_{E'_{|\T}=E}P(\T',E'),
\end{equation}
where the (clearly disjoint) union is taken inside $A(V(M))$ or the finite-dimensional $A(W)$ where $W$ is the union of all $V(\T')$. We define $P^\sigma(\T,E)$ and $\E P(\T,E)$ similiarly.
\end{definition}



\begin{theorem}\label{thm:OneOneModerate}
The one-to-one correspondences~\eqref{eq:OneOneMild}, \eqref{eq:OneOneMildSigma}, and \eqref{eq:OneOneMildEnhanced} still hold if $E$ is face-degenerate. 
\end{theorem}
\begin{proof}
If $\T'$ is a decoration with edge set containing $\T$, the set of decorations of type $E\in\TE(\T)$ is the disjoint union, over the descendants $E'$ of $E$, of the sets of decorations of type $E'$. The result now follows from Theorem~\ref{thm:OneOneMild}.
\end{proof}

\subsection{Simplex-degenerate edge sets}
Given a simplex-degenerate edge set $E\in \TE(\T)$. Let $d(E)$ denote the number of degenerate simplices.
\begin{lemma} Let $E\in \TE(\T)$ be simplex-degenerate and let $\T'$ be a triangulation obtained from $\T$ be performing a 2-3 move at a face between a degenerate and a non-degenerate simplex. Then $E$ has a unique descendant $E'\in \TE(\T)$. Furthermore $d(E')=d(E)-1$.
\end{lemma}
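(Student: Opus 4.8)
The plan is to analyze the local picture of the $2$-$3$ move directly, as in Figure~\ref{fig:WildDescendant}. Write the two simplices meeting along the face where the move is performed as a bipyramid with apexes $a$ and $b$ and equatorial vertices $p,q,r$, so that the old tetrahedra are $\Delta_a=apqr$ and $\Delta_b=bpqr$ (glued along the equatorial face $pqr$), while the new tetrahedra of $\T'$ are $abpq$, $abqr$, $abrp$, all containing the single new edge $ab$. Assume $\Delta_a$ is the degenerate one. Then all six of its edges, in particular $ap$, $aq$, $ar$ and the equatorial edges $pq$, $pr$, $qr$, are zero-edges of $E$. First I would observe that, since $pq$, $pr$, $qr$ are zero-edges, transitivity of $E$ applied to the faces $bpq$, $bpr$, $bqr$ forces each of the pairs $\{bp,bq\}$, $\{bp,br\}$, $\{bq,br\}$ to consist of edges of equal type; hence $bp,bq,br$ are either all zero-edges, in which case $\Delta_b$ would be degenerate, or all nonzero-edges. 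Since $\Delta_b$ is non-degenerate by hypothesis, $bp,bq,br$ are all nonzero-edges.

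Next I would pin down the new edge $ab$. Since a $2$-$3$ move introduces $ab$ as a genuinely new edge (not identified with any edge of $\T$), a descendant $E'\in\E(\T')$ of $E$ (Definition~\ref{def:Descendant}) amounts to a choice of ``zero'' or ``nonzero'' for $ab$ making the resulting partition transitive, and the only faces of $\T'$ that are not already faces of $\T$ are $abp$, $abq$, $abr$. On the face $abp$ the edge $ap$ is a zero-edge and the edge $bp$ is a nonzero-edge, so transitivity forbids $ab$ from being a zero-edge; hence $ab$ must be a nonzero-edge, which shows a descendant is unique if one exists. Conversely, declaring $ab$ a nonzero-edge does yield a transitive partition: on each of $abp$, $abq$, $abr$ exactly one edge ($ap$, $aq$, resp.\ $ar$) is a zero-edge, and every other face of $\T'$ is a face of $\T$, on which transitivity already holds. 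This establishes existence and uniqueness of $E'$.

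Finally I would count degenerate simplices. Outside the bipyramid the triangulations and the partitions agree, so no degenerate simplex is created or destroyed there. Inside, the new tetrahedra $abpq$, $abqr$, $abrp$ each contain the nonzero-edge $ab$, hence none of them is degenerate, whereas among $\Delta_a,\Delta_b$ exactly one, namely $\Delta_a$, was degenerate. Therefore $d(E')=d(E)-1$.

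The only steps requiring genuine care are the first one --- verifying that non-degeneracy of $\Delta_b$ forces $bp,bq,br$ to be nonzero-edges --- and making sure that the edge identifications in $\T$ cause no trouble. The latter is not an obstacle, because the transitivity argument is entirely local to individual faces, so global identifications (including the possibility that some of $a,b,p,q,r$ represent the same ideal vertex of $M$) are irrelevant; the hypothesis that the move is performed between a degenerate and a non-degenerate simplex guarantees in particular that $\Delta_a$ and $\Delta_b$ are distinct.
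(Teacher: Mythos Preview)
Your proof is correct and follows exactly the approach of the paper, which simply says ``Transitivity forces the new edge to be nonzero, so none of the new simplices are degenerate (see Figure~\ref{fig:WildDescendant}).'' You have merely unpacked this sentence in full detail, including the preliminary observation that $bp,bq,br$ must all be nonzero-edges, which the paper's figure indicates but does not spell out.
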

\begin{proof}
Transitivity holds if and only if the new edge is non-zero. Hence, none of the new simplices are degenerate (see Figure~\ref{fig:WildDescendant}). This proves the result.
\end{proof}
\begin{corollary}\label{cor:RemoveDegeneracy} Let $E\in \TE(\T)$ be simplex-degenerate. There exists a triangulation $\T'$ obtained from $\T$ by performing 2-3 moves such that $E$ has a unique descendant $E'\in \TE(\T')$, which is face-degenerate.
\end{corollary}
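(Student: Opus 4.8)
The plan is a straightforward induction on $d(E)$, repeatedly invoking the preceding lemma. If $d(E)\ge 1$, I first locate a face of $\T$ lying between a degenerate and a non-degenerate simplex, perform a 2-3 move there to obtain $(\T_1,E_1)$, and record that $d(E_1)=d(E)-1$ and that $E_1$ is again wildly degenerate (as long as $d(E_1)\ge 1$) by the lemma. Iterating exactly $d(E)$ times produces a triangulation $\T'$ on which $E$ has a unique descendant $E'$ with no degenerate simplices; the real content is then to check that $E'$ is \emph{moderately} degenerate rather than mildly degenerate or non-degenerate, i.e.\ that it still has some degenerate faces.

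The existence of a suitable face at each stage is the one place where some care is needed. Since $E$ is wildly degenerate, by Definition~\ref{def:PartitionType} there is at least one degenerate and at least one non-degenerate simplex. The triangulation $\T$ is a triangulation of the connected closed pseudomanifold $\widehat M$, hence it is strongly connected: the dual graph, with one vertex per simplex and one edge per face, is connected. Two-coloring the simplices by ``degenerate/non-degenerate,'' connectedness forces a face joining a degenerate simplex to a non-degenerate one, and such a face is flanked by two distinct simplices, so the 2-3 move at it is legitimate. This property is inherited along the induction: the three simplices created by a 2-3 move all contain the new edge, which the lemma forces to be nonzero precisely because the move is taken at a face between a degenerate and a non-degenerate simplex; so these three simplices are non-degenerate, $d$ drops by exactly one, and $E_1$ is wildly degenerate whenever $d(E_1)\ge 1$, so the hypotheses of the lemma are available again.

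For the endpoint, let $\T'$ be reached after the final 2-3 move, which eliminates the last degenerate simplex $D_{\mathrm{last}}$. The three faces of $D_{\mathrm{last}}$ other than the one at which that move is performed survive in $\T'$, and every edge of each of them is an edge of the degenerate simplex $D_{\mathrm{last}}$, hence a zero-edge; thus $\T'$ has degenerate faces but no degenerate simplices, so $E'$ is moderately degenerate. For uniqueness of the descendant: every 2-3 move in the chain $\T\to\T_1\to\cdots\to\T'$ is performed at a face between a degenerate and a non-degenerate simplex, so by the lemma each newly created edge must be nonzero in any transitive partition extending the preceding one; consequently the only transitive partition of $\T'$ that restricts to $E$ on the edges of $\T$ is the one that is nonzero on every edge not already in $\T$, which is exactly $E'$. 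Hence $E'$ is the unique descendant of $E$ in $\E(\T')$ in the sense of Definition~\ref{def:Descendant}.

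I do not expect a genuine obstacle here; the argument is essentially bookkeeping on top of the preceding lemma. The two points worth stating explicitly are the ones above: that the hypothesis of the lemma — availability of a face between a degenerate and a non-degenerate simplex — persists through the induction, which rests on strong connectedness of $\widehat M$; and that no degenerate \emph{face} can disappear before the last degenerate \emph{simplex} does, which is what guarantees the terminal partition is moderately (and not merely mildly) degenerate.
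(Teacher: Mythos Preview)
Your proof is correct and follows the same approach as the paper, which simply says ``Repeatedly perform 2-3 moves at a face between a degenerate and a non-degenerate simplex until there are no more degenerate simplices.'' You supply considerably more detail than the paper does---in particular, the connectedness argument guaranteeing a usable face at each stage, and the check that the terminal partition is genuinely moderately (not merely mildly) degenerate, which the paper leaves implicit; your closing summary sentence about no degenerate face disappearing before the last degenerate simplex is a slight overstatement of what you actually proved, but the argument you give (that the three external faces of $D_{\mathrm{last}}$ survive and remain degenerate) is what is needed and is correct.
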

\begin{proof}
Repeatedly perform 2-3 moves at a face between a degenerate and a non-degenerate simplex until there are no more degenerate simplices.
\end{proof}

\begin{definition}\label{def:PtolemyWild} Let $E\in \TE(\T)$ be simplex-degenerate and let $E'$ and $\T'$ be as in Corollary~\ref{cor:RemoveDegeneracy}. Define
\begin{equation}
P(\T,E)=P(\T',E')
\end{equation}
and define $P^\sigma(\T,E)$ and $\E P(\T,E)$ similarly.
\end{definition}

Note that as a subset of $A(V(M))$ the definition is independent of the choice of $\T'$.
%

\begin{theorem}\label{thm:OneOneWild}
The one-to-one correspondences~\eqref{eq:OneOneMild}, \eqref{eq:OneOneMildSigma}, and \eqref{eq:OneOneMildEnhanced} hold for all transitive edges sets $E$ that are not totally degenerate.
\end{theorem}
\begin{proof}
This follows from Theorem~\ref{thm:OneOneModerate} and Corollary~\ref{cor:RemoveDegeneracy}.
\end{proof}

\section{The invariant Ptolemy varieties}\label{sec:Invariant}
For the non-degenerate edge set $E$, define $P(\T,E)$ to be the standard Ptolemy variety (Definition~\ref{def:PtolemyVariety}). The Ptolemy variety for the other types were defined in Definitions~\ref{def:PtolemyMild}, \ref{def:PtolemyModerate}, and \ref{def:PtolemyWild}. We denote the totally degenerate edge set where all edges are zero-edges by $0\in\TE(\T)$. 
\begin{definition}\label{def:InvariantPtolemy} The \emph{invariant Ptolemy variety} $\overline P(\T)$ is defined by
\begin{equation}
\overline P(\T)=\bigcup_{E\in\TE(\T)\setminus\{0\}} P(\T,E),
\end{equation}
where the (disjoint) union is taken inside $A(V(M))$. We similarly define $\overline P^\sigma(\T)$ and $\E\overline P(\T)$.
\end{definition}

\subsection{Affine coverings}
A priori, the invariant Ptolemy varieties are only unions of quasi-affine subsets of $A(V(M))$.
\begin{proposition}\label{prop:ProofAffine}
The invariant Ptolemy varieties are indeed varieties. Changing the triangulation changes the varieties by biregular isomorphisms.
\end{proposition}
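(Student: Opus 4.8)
The plan is to realize $\overline P(\T)$ as a locally closed subset of a \emph{finite}-dimensional affine space, presented by a finite covering by affine open pieces; this simultaneously shows it is a variety (automatically separated, being a subspace of affine space) and provides the setting for the triangulation-independence statement, which I will deduce from the effect of a single $2$--$3$ move. First I would reduce to finite dimensions: $\E(\T)$ is finite, and for each $E\in\E(\T)\setminus\{0\}$ the set $P(\T,E)$ is, by Definitions~\ref{def:PtolemyMild}--\ref{def:PtolemyWild}, assembled from varieties $P(\T'',E'')$ attached to triangulations $\T''$ obtained from $\T$ by finitely many $2$--$3$ moves, with $E''$ non- or mildly degenerate. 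Let $W\subseteq V(M)$ be the finite union of all edge sets $V(\T'')$ that occur; by Proposition~\ref{prop:Embeddings} each $P(\T,E)$, and hence $\overline P(\T)$, embeds canonically into $A(W)$, and I would work inside $A(W)$ throughout. So $\overline P(\T)=\bigsqcup P(\T'',E'')$ is a finite disjoint union of quasi-affine subsets of $A(W)$, and it remains to produce an affine open covering.

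For each piece $P(\T'',E'')$ in this decomposition, let $U_{E''}\subseteq\overline P(\T)$ be the locus on which $c_e\neq0$ for every nonzero-edge $e$ of $E''$ (a coordinate of $A(W)$ via Proposition~\ref{prop:Embeddings}). Each $U_{E''}$ is open in $\overline P(\T)$, and since $P(\T'',E'')\subseteq U_{E''}$ the $U_{E''}$ cover $\overline P(\T)$. The heart of the argument is that $U_{E''}$ is \emph{closed} in the basic affine open $D\!\big(\prod_e c_e\big)$ of $A(W)$ (product over nonzero-edges of $E''$), hence itself affine. Inside this chart no face can carry two vanishing coordinates except among the zero-edges of $E''$, so the chopped-Bruhat-cocycle construction of Corollary~\ref{cor:ChoppedCocycle} applies, and the argument of Theorem~\ref{thm:OneOneMild} (via Corollary~\ref{cor:LocalOneToOne}, Proposition~\ref{prop:ConcreteFormula} and van Kampen) shows that a point of the chart satisfying the Ptolemy relations, the identification relations, and the edge relations around each zero-edge of $E''$ is exactly a non-totally-degenerate decorated boundary-unipotent representation; its Ptolemy partition is transitive by Lemma~\ref{lemma:TransitiveDec}, refines $E''$, and is nonzero because all nonzero-edges of $E''$ remain nonzero on the chart, so the point lies in $U_{E''}$. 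Conversely every point of $U_{E''}$ satisfies these (closed) relations. Thus $U_{E''}$ is the common zero locus in $D\!\big(\prod_e c_e\big)$ of the finitely many explicit polynomials just listed, and the gluing of the $U_{E''}$ is simply the locally closed subset $\overline P(\T)=\bigcup U_{E''}$ of $A(W)$.

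For triangulation independence it suffices to treat one $2$--$3$ move $\T\rightsquigarrow\T'$, since any two topologically ideal triangulations of $M$ are connected by a finite sequence of $2$--$3$ and $3$--$2$ moves. Every $E'\in\E(\T')$ restricts to some $E\in\E(\T)$, and the descendants of $E$ in $\E(\T')$ were described in the preceding section; I would check that $\bigsqcup_{E'_{|\T}=E}P(\T',E')=P(\T,E)$ as subsets of $A(V(M))$. When the face of the move is nondegenerate in $E$ this is the classical behaviour of the Ptolemy variety under a $2$--$3$ move: the coordinate of the new edge is determined by a Ptolemy relation of the bipyramid, one of the three new Ptolemy relations becomes redundant, and the value forced for the new edge coordinate is precisely its virtual-edge coordinate under Proposition~\ref{prop:Embeddings}; when the face is degenerate the equality is exactly what Definitions~\ref{def:PtolemyModerate}--\ref{def:PtolemyWild} encode. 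Summing over $E\in\E(\T)$ gives $\overline P(\T)=\overline P(\T')$ inside $A(V(M))$ (in particular a biregular isomorphism), and the same reasoning applies to $\overline P^\sigma$ and $\E\overline P$.

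The main obstacle is the affineness of the charts $U_{E''}$ in the second paragraph: one must be certain that the Ptolemy relations together with the edge relations around the zero-edges of $E''$ (and the identification relations) cut out $U_{E''}$ with no spurious points — equivalently, that every solution in the chart genuinely comes from a decorated representation rather than being a formal tuple of Ptolemy data that fails to assemble into a cocycle. This requires rerunning the chopped-cocycle/van Kampen argument of Theorem~\ref{thm:OneOneMild} and verifying that it survives when the vanishing set of the point is a \emph{proper} subset of the zero-edges of $E''$; everything else is bookkeeping with the stratification and with $2$--$3$ moves.
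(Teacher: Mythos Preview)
Your construction of the affine cover is essentially the paper's: both reduce to a finite $W\subset V(M)$, define for each mildly degenerate $(\T'',E'')$ the open set $U_{E''}=\{\prod_{e\in E''_{\neq 0}}c_e\neq 0\}$, and argue that $\overline P(\T)\cap U_{E''}$ is cut out inside this basic open by the Ptolemy, identification, edge-orientation, and edge relations around the zero-edges of $E''$. The paper makes the key closure point by observing that since $(\T'',E'')$ is mildly degenerate, no point of $X\cap U_{E''}$ can violate transitivity, so every such point lies in some $P(\T,E')$ with $E'\leq E$; this is exactly the ``main obstacle'' you flag, and you resolve it the same way.

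Where you diverge is triangulation independence. You reduce to a single $2$--$3$ move and invoke the claim that any two ideal triangulations of $M$ are connected by $2$--$3$ and $3$--$2$ moves. That claim is not free: it is Matveev--Piergallini, it requires at least two tetrahedra, and you would still owe a careful check that the stratified union $\bigsqcup_{E'_{|\T}=E}P(\T',E')$ matches $P(\T,E)$ as subsets of $A(V(M))$ across all degeneracy types (your sketch for the non-degenerate face case glosses over what happens to the auxiliary triangulations $T(E)$ versus $T'(E')$). The paper avoids all of this: since the embedding of Proposition~\ref{prop:Embeddings} is built from the Bruhat cocycle of the underlying decoration, and both $\overline P(\T)$ and $\overline P(\T')$ parametrize the \emph{same} set of non-totally-degenerate decorations, their images in $A(V(M))$ are literally equal. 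This is shorter, needs no external connectivity result, and handles one-tetrahedron triangulations uniformly.
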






\begin{proof}
We prove this for $\overline P(\T)$; the proof for the other variants are identical. We must prove that $\overline P(\T)$ has an open affine cover. 
For each transitive edge set $(\T,E)$ consider the triangulation $T(E)$ defined as follows: if $E$ is non-degenerate or edge-degenerate, $T(E)=\T$; if $E$ is moderately, or simplex-degenerate, define $T(E)$ to be a triangulation $\T'$ as in Definitions~\ref{def:PtolemyModerate} or~\ref{def:PtolemyWild}, respectively. By definition, the invariant Ptolemy variety $\overline P(\T)$ is then given by
\begin{equation}
\overline P(\T)=\bigcup_{\substack{E\in \TE(\T)\setminus\{0\}\\F\in\TE(T(E)),\,F_{|\T}=E}}\!\!\!\!P(T(E),F).
\end{equation}
Note that all transitive edge sets $(T(E),F)$ in the above union are (at worst) edge-degenerate. Let $W\in V(M)$ be the union of all $V(T(E))$, i.e.~$W$ is the set of virtual edges $e$ such that $e$ is an edge of one of the $T(E)$. By Proposition~\ref{prop:Embeddings}, we may regard $\overline\P(\T)$ as a subset of the finite dimensional affine space $A(W)$. Recall that $A(W)$ is given by a coordinate $c_e$ for each virtual edge in $W$. 
For a transitive edge set $(\T',E')$ let
\begin{equation}
U_{(\T',E')}=\big\{\prod_{e\in E_{\neq 0}'}c_e\neq0\big\}\in A(V(M)),
\end{equation} 
where $E_{\neq 0}'$ denotes the set of nonzero edges of $(\T',E')$. Clearly, $U_{(\T',E')}$ is Zariski open, and
\begin{equation}
(\T',E')\subset(\T',E'')\iff U_{(\T',E')}\subset U_{(\T',E'')}.
\end{equation}
Also, for any transitive edge sets $(T',E')$ and $(\T'',E'')$ we clearly have
\begin{equation}
E'_{=0}\cap E''_{\neq 0}\neq \emptyset  \implies P(\T',E')\cap U_{(\T'',E'')}=\emptyset.
\end{equation}
From this it follows that for any $E\in\TE(\T)$ and any $F\in\TE(T(E))$, we have 
\begin{equation}\label{eq:OpenIntersect}
\overline P(\T)\cap U_{(T(E),F)} = \big(\bigcup_{(\T,E')\leq(\T,E)} P(\T,E')\big)\cap U_{(T(E),F)}\subset A(W).
\end{equation}
By Proposition~\ref{prop:Embeddings} the right-hand side of \eqref{eq:OpenIntersect} canonically embeds in $A(V(T(E)))$. Let $X$ be the affine variety in $A(V(T(E)))$ cut out by the Ptolemy relations, identification relations, and edge orientation relations for $T(E)$ together with the edge relations around zero-edges of $F$. Since $(T(E),F))$ is (at worst) edge-degenerate, there are no points in $X\cap U_{(T(E),F)}$ that violate transitivity, so all points in $X\cap U_{(T(E),F)}$ are in $P(\T,E')$ for some $E'\in\TE(\T)$ with $(\T,E')<(\T,E)$.  
It thus follows from~\eqref{eq:OpenIntersect} that $X\cap U_{(T(E),F)}$ equals $\overline P(\T)\cap U_{(T(E),F)}$.
Since $U_{(T(E),F)}$ is the non-vanishing set of a single polynomial, $X\cap U_{(T(E),F)}$ is affine.
This concludes the proof of existence of an affine cover of $\overline P(\T)$.

Given another triangulation $\T'$, $\overline P(\T)$ and $\overline P(\T')$ are clearly isomorphic as sets since both parametrize non totally degenerate decorations. The fact that they are also equal as varieties follows from the fact that the images of $\overline P(\T)$ and $\overline P(\T')$ in $A(V(M))$ are equal (see the proof of Proposition~\ref{prop:Embeddings}).  
\end{proof}



\section{The reduced Ptolemy varieties}
The diagonal action on decorations, Ptolemy assignments, and natural cocycles extends canonically to the case where some of the Ptolemy coordinates are zero and some of the long edges are diagonal instead of counterdiagonal. We define the reduced Ptolemy variety $\overline P_{\red}(\T)$ to be the quotient of $\overline P(\T)$ by the diagonal action. 
In~\cite{PtolemyField} we showed that the reduced Ptolemy variety $P(\T)_{\red}$ can be computed by setting appropriately chosen Ptolemy coordinates equal to $1$: 
\begin{theorem}[{\cite[Prop.~1.16]{PtolemyField}}]\label{thm:SetEdgesToOne} Let $G$ be a connected graph in the one-skeleton of $\widehat M$, which has fundamental group $\Z$ and contains all vertices. The reduced Ptolemy variety $P(\T)_{\red}$ is isomorphic to the subvariety of $P(\T)$ where the Ptolemy coordinates of all edges in $G$ are $1$. The same result holds for $P^\sigma(\T)_{\red}$.
\end{theorem}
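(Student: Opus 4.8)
The plan is to realize the subvariety $L=\{c_e=1\text{ for all edges }e\text{ of }G\}\cap P(\T)$ as a global slice for the diagonal torus action: every $T^c$-orbit in $P(\T)$ meets $L$ in exactly one point, and the resulting identification is biregular.

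First I would record the diagonal action combinatorially. Let $\Gamma$ be the $1$-skeleton of $\widehat M$, whose vertices are the $c$ boundary components of $M$ and whose edges are the edges of $\T$. From $g_i\mapsto g_i\diag(d_i,d_i^{-1})$ and $c_{ij}=\det(v_i,v_j)$ with $v_i=g_i\Vector{1}{0}$ (Figure~\ref{fig:DiagonalAction}), one sees that $(d_v)_v\in T^c$ multiplies the coordinate $c_e$ of an edge $e$ with endpoints on boundary components $a$ and $b$ (possibly $a=b$) by $d_ad_b$. Thus the action is through a homomorphism $\psi\colon T^c\to\prod_e\C^*$ (product over the edges of $\T$) given by the \emph{unsigned} incidence matrix of $\Gamma$; composing with the projection onto the $G$-coordinates gives a homomorphism $f\colon T^c\to(\C^*)^G$ given by the unsigned incidence matrix of $G$ (one factor per edge of $G$).

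The key step is the claim: \emph{$f$ is a surjective isogeny with kernel $\{\pm(1,\dots,1)\}$, this kernel equals $\ker\psi$, and it acts trivially on $P(\T)$.} The hypotheses make $G$ connected and spanning (it contains a maximal tree of $\Gamma$) and unicyclic ($\pi_1(G)=\Z$, so $G$ has exactly $c$ edges), hence its unsigned incidence matrix is a $c\times c$ integer matrix; taking the generating cycle of $G$ to be odd — which is what the hypothesis intends and is available, e.g., whenever some edge of $\T$ has both endpoints on a single boundary component — this matrix has determinant $\pm2$ by a standard fact about incidence matrices, so $f$ is surjective with $|\ker f|=2$. An element $(d_v)\in\ker f$ obeys $d_ad_b=1$ along the edges of the connected graph $G$, so $d$ alternates along paths; running around the odd cycle forces $d\equiv\pm1$, whence $\ker f=\{\pm(1,\dots,1)\}$. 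Since $\ker\psi\subseteq\ker f$ (more relations) and $\{\pm(1,\dots,1)\}\subseteq\ker\psi$ trivially, all three coincide; and $(\pm1)(\pm1)=1$ shows $\{\pm(1,\dots,1)\}$ acts trivially on $P(\T)$.

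Granting this, $f$ descends to an isomorphism $\bar f\colon T^c/\{\pm(1,\dots,1)\}\xrightarrow{\sim}(\C^*)^G$, and $T^c/\{\pm(1,\dots,1)\}\cong(\C^*)^c$ acts freely on $P(\T)$ (stabilizers are $\ker\psi/\{\pm(1,\dots,1)\}=1$). For $p\in P(\T)$ all $c_e(p)$ are units, so $\phi(p):=\bar f^{-1}\big((c_e(p))_{e\in G}\big)$ is given by Laurent monomials in the $c_e$, hence is a morphism $P(\T)\to(\C^*)^c$; and $\phi(p)^{-1}\cdot p$ has all $G$-coordinates equal to $1$, i.e.~lies in $L$. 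The maps $L\times(\C^*)^c\to P(\T)$, $(q,t)\mapsto t\cdot q$, and $P(\T)\to L\times(\C^*)^c$, $p\mapsto\big(\phi(p)^{-1}\cdot p,\ \phi(p)\big)$, are then mutually inverse morphisms, exhibiting $P(\T)$ as a trivial $(\C^*)^c$-bundle over $L$; quotienting by the free $(\C^*)^c=T^c/\{\pm(1,\dots,1)\}$-action (which has the same orbits as $T^c$) gives $P(\T)_{\red}\cong L$, the desired isomorphism. The argument for $P^\sigma(\T,E)$ is identical — the modified identification relations only insert signs $\pm1$, changing neither $\psi$ nor $\phi$. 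I expect the one genuinely delicate point to be precisely that the factor $2$ in the determinant is exactly cancelled by the order-$2$ kernel $\{\pm(1,\dots,1)\}$ of the diagonal action; this is the crux, and it is why the hypothesis must be $\pi_1(G)=\Z$ with an odd generating cycle rather than $\pi_1(G)=0$.
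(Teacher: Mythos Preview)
The paper does not give its own proof here: the result is quoted from \cite{PtolemyField}, and the accompanying remark notes that the argument there was carried out only for a ``maximal tree with $1$- or $3$-cycle'' and extends trivially. Your proof is correct and makes this argument explicit in the natural generality, with the incidence-matrix computation showing precisely why an odd cycle is needed (determinant $\pm 2$ rather than $0$, matching the order-$2$ kernel $\{\pm(1,\dots,1)\}$ of the diagonal action). Your caveat that the hypothesis ``$\pi_1(G)=\Z$'' must really mean an odd generating cycle is well-taken and consistent with the $1$- and $3$-cycle cases singled out in the remark; for an even cycle the map $f$ would have positive-dimensional kernel and the slice construction would fail.
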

\begin{remark}
In~\cite{PtolemyField} this is only proved when $G$ is a so-called ``maximal tree with $1$- or $3$-cycle'', but the proof can be trivially extended to any graph with fundamental group $\Z$ containing all vertices.
\end{remark}

\begin{theorem}\label{thm:Reduced} Let $E\in \TE(\T)$ be edge-degenerate. Then, there exists a graph $G$ of nonzero edges which has fundamental group $\Z$ and contains all vertices. The reduced Ptolemy variety $P(\T,E)_{\red}$ is isomorphic to the subvariety of $P(\T,E)$ where the Ptolemy coordinates of all edges in $G$ are $1$. The same result holds for $P^\sigma(\T,E)_{\red}$ and $\E P(\T,E)_{\red}$.
\end{theorem}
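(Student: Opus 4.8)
The plan is to reduce the statement to Theorem~\ref{thm:SetEdgesToOne}; the only genuinely new ingredient is producing the graph $G$ out of \emph{nonzero} edges, after which the proof of Theorem~\ref{thm:SetEdgesToOne} carries over essentially verbatim. First I would record the combinatorial consequence of transitivity in the mildly degenerate case: every face of $\T$ has at most one zero-edge, since two zero-edges on a face would force the third to be zero and create a degenerate face, contrary to Definition~\ref{def:PartitionType}. Consequently the two endpoints of \emph{any} edge $[v,w]$ of $\T$ lie in the same connected component of the graph $G_0$ of all nonzero edges: either $[v,w]$ is nonzero, or, choosing a face $[v,w,u]$ through it, the edges $[v,u]$ and $[w,u]$ are both nonzero and join $v$ to $w$ through $u$. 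Since the $1$-skeleton $\Gamma$ of $\widehat M$ is connected, $G_0$ is a connected spanning subgraph of $\Gamma$.

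Next I would produce the required cycle inside $G_0$. If $M$ has a single boundary component this is automatic: $G_0$ is nonempty (the partition is not totally degenerate) and every edge is a loop, so any nonzero edge is the desired $G$. In general I would argue that $G_0$ cannot be a bare spanning tree $T_0$: given a zero-edge $e=[v,w]$, the nonzero edges $[v,u_i],[w,u_i]$ coming from the (at least two) faces $[v,w,u_i]$ around $e$ would all lie in $T_0$, yielding distinct tree paths $v-u_i-w$ and contradicting uniqueness of paths in a tree. Hence $G_0$ contains a spanning tree $T_0$ together with a further nonzero edge $e_1$, and $G:=T_0\cup\{e_1\}$ consists of nonzero edges, contains the maximal tree $T_0$, and has $\pi_1(G)\cong\Z$.

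It then remains to run the argument of Theorem~\ref{thm:SetEdgesToOne}. The diagonal $(\C^*)^c$-action fixes every zero-coordinate (scaling $c_e$ by a product $d_id_j$ sends $0$ to $0$) and acts on the nonzero coordinates exactly as on the ordinary Ptolemy variety, by $c_{ij}\mapsto d_id_jc_{ij}$ (Figure~\ref{fig:DiagonalAction}); moreover the edge relations~\eqref{eq:TopBottomEdgeEq} are homogeneous for this action, each summand $c_{23,k}/(c_{12,k}c_{13,k})$ around a zero-edge with top ideal vertex $b$ being multiplied by the common factor $d_b^{-2}$. So the action restricts to $P(\T,E)$, its orbit structure is governed entirely by the nonzero coordinates, and $P(\T,E)_{\red}$ is the quotient. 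Since $G$ involves only nonzero coordinates and is a spanning tree plus one cycle edge, the coordinates $c_e$ with $e\in G$ transform under $(\C^*)^c$ in precisely the way used in the proof of Theorem~\ref{thm:SetEdgesToOne}, and that argument applies without change, showing each orbit meets the subvariety defined by $c_e=1$ for $e\in G$ in exactly one point, and that this gives a biregular isomorphism with $P(\T,E)_{\red}$. For $P^\sigma(\T,E)_{\red}$ one invokes the $\PSL$-version of Theorem~\ref{thm:SetEdgesToOne}; the units $\eta_{ij,k}$ in~\eqref{eq:EdgeIdentificationsSigma} do not affect homogeneity, so everything above goes through. For $\E P(\T,E)_{\red}$ the diagonal action fixes $m_s,l_s$ (hence the monomials $t_j,b_j,h_j$), the modified identification relations and the enhanced edge relations~\eqref{eq:EdgeEquationsEnhanced} are again homogeneous for it, and $G$ still consists only of $c$-edges, so the same reduction works.

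The main obstacle is the construction of $G$ in the first two paragraphs — specifically verifying that the graph of nonzero edges still carries a cycle. The spanning property is clean, but the cycle needs the case analysis sketched above (including the small degenerate configurations where a zero-edge meets only one tetrahedron) together with the fact, as already needed for $P(\T)$, that $\T$ has at least as many edges as boundary components; this holds automatically when $M$ has torus boundary. Once $G$ is available, the rest is a routine adaptation of \cite{PtolemyField}.
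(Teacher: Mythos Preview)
Your proposal is correct and follows the same approach as the paper: the paper's proof simply asserts that, for mildly degenerate $E$, the nonzero edges connect all vertices and contain a cycle, then invokes Theorem~\ref{thm:SetEdgesToOne}. Your additional details---the connectivity argument via transitivity, the verification that the edge relations~\eqref{eq:TopBottomEdgeEq} and~\eqref{eq:EdgeEquationsEnhanced} are homogeneous for the diagonal action, and the case analysis for producing the cycle---go beyond what the paper supplies and are correct (the cycle argument is indeed the only delicate point, as you note, and the paper does not justify it either).
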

\begin{proof}
Since $E$ is edge-degenerate, the set of nonzero edges connects all vertices of $\T$ and contains cycles. Thus, we can choose a graph $G$ with the above properties. Hence, the result follows from Theorem~\ref{thm:SetEdgesToOne}. The proofs trivially extend to the case of $\E P(\T,E)_{\red}$.
\end{proof}

\section{Summary of the proofs of main results}

Theorem~\ref{thm:MainResult} is an immediate consequence of Theorem~\ref{thm:OneOneWild} and Proposition~\ref{prop:ProofAffine}. Theorem~\ref{thm:OneToOneOverNonDegenerate} follows from Corollary~\ref{cor:Freedom}. Theorem~\ref{thm:MainThmPSL} follows from Theorem~\ref{thm:OneOneWild} and the analogue of Corollary~\ref{cor:Freedom} for $\PSL(2,\C)$. Theorem~\ref{thm:MainThmEnhanced} follows from Theorem~\ref{thm:OneOneWild} and Corollary~\ref{cor:Freedom}.

\section{Examples}\label{sec:Examples}
Let $M$ be the manifold \texttt{m009} from the SnapPy census~\cite{SnapPy}. The census triangulation $\T$ of $M$ is shown in Figure~\ref{fig:Trigm009}.
\begin{figure}[htb]
\scalebox{0.66}{\input{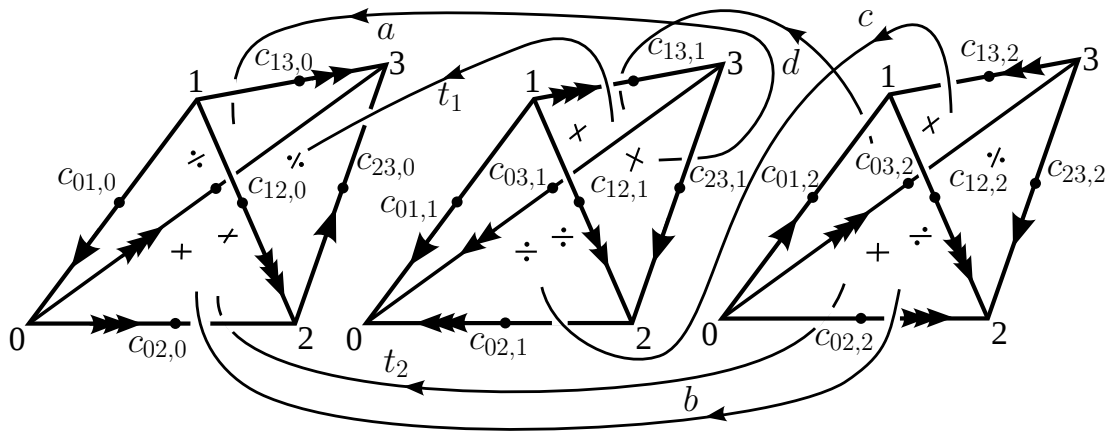}}
\caption{Census triangulation of \texttt{m009}. The signs on the faces indicate the obstruction cocycle $\sigma^3$.}\label{fig:Trigm009}
\end{figure}
We refer to the three edges as \emph{edge 1}, \emph{edge 2}, and \emph{edge 3}, according to the number of arrow heads. By inspecting the figure, one checks that there are four transitive edge sets: The non-degenerate edge set where all edges are nonzero, the edge-degenerate edge set where only edge 1 is zero, the edge-degenerate edge set where only edge 2 is zero, and the totally degenerate edge set (which we ignore, see Section~\ref{sec:TotallyDegenerate}). The edge relation around edge 1 is given by
\begin{equation}\label{eq:EdgeOneRelation}
\frac{c_{23,0}}{c_{21,0}c_{13,0}}+\frac{c_{01,1}}{c_{03,1}c_{31,1}}+\frac{c_{10,0}}{c_{12,0}c_{20,0}}+\frac{c_{01,2}}{c_{03,2}c_{31,2}}+\frac{c_{23,1}}{c_{21,1}c_{13,1}}+\frac{c_{32,2}}{c_{30,2}c_{02,2}}=0
\end{equation}
and the edge relation around edge 2 is given by
\begin{equation}\label{eq:EdgeTwoRelation}
\frac{c_{02,0}}{c_{01,0}c_{12,0}}+\frac{c_{30,1}}{c_{31,1}c_{10,1}}+\frac{c_{20,2}}{c_{23,2}c_{30,2}}+\frac{c_{12,1}}{c_{13,1}c_{32,1}}=0.
\end{equation}
Since there is no transitive edge set where edge 3 is zero (except the totally degenerate edge set), Theorem~\ref{thm:Reduced} implies that the reduced Ptolemy variety (all variants) is given by setting the Ptolemy coordinate of edge 3 equal to $1$.

\subsection{The Ptolemy variety $\overline P(\T)$}\label{sec:SLexample}
The identification relations corresponding to the three edges are
\begin{equation}
\begin{gathered}
c_{23,0}=-c_{23,2}=-c_{01,1}=c_{01,2}=-c_{01,0}=-c_{23,1}\\
c_{13,0}=c_{12,1}=-c_{13,2}=-c_{03,1}=c_{13,0}\\
c_{13,1}=c_{03,2}=c_{02,0}=c_{12,2}=-c_{02,1}=c_{03,0}=c_{02,2}=c_{12,0}.
\end{gathered}
\end{equation}
Letting $x=c_{23,0}$, $y=c_{13,0}$ and $z=c_{13,1}$, the Ptolemy relations $c_{03,i}c_{12,i}+c_{01,i}c_{23,i}=c_{02,i}c_{13,i}$ become
\begin{equation}\label{eq:m009PtolemySL}
z^2-x^2=zy,\qquad -y^2+x^2=-z^2,\qquad z^2-x^2=-zy.
\end{equation}
One easily checks that the only solution to this is $x=y=z=0$, so the Ptolemy variety $\overline P(\T)$ is empty. There are thus no irreducible, boundary-unipotent $\SL(2,\C)$-representations of \texttt{m009} (as mentioned in Section~\ref{sec:PSLPtolemy}, the geometric $\PSL(2,\C)$-representation has no boundary-unipotent lift to $\SL(2,\C)$).

\subsection{The Ptolemy varieties $\overline P^\sigma(\T)$}\label{sec:PSLexample}

An elementary cohomology computation shows that $H^2(\widehat M;\Z/2\Z)=\Z/2\Z\oplus\Z/2\Z$ and that the three non-trivial classes are represented by cocycles $\sigma^i\in C^2(\widehat M;\Z/2\Z)$ whose restrictions $\sigma^i_k$ to the $k$th simplex of $\T$ are given by
\begin{equation}
\begin{aligned}
\sigma^1_0&=f_{0}^*+f_{2}^*,\quad&\sigma^1_1&=f_{0}^*+f_{1}^*,\quad&\sigma^1_2&=0,\\
\sigma^2_0&=f_{0}^*+f_{1}^*,\quad&\sigma^2_1&=f_{0}^*+f_{3}^*,\quad&\sigma^2_2&=f_{0}^*+f_{1}^*,\\
\sigma^3_0&=f_{1}^*+f_{2}^*,\quad&\sigma^3_1&=f_{1}^*+f_{3}^*,\quad&\sigma^3_2&=f_{0}^*+f_{1}^*,
\end{aligned}
\end{equation}
where $f_{i}^*\in C^2(\Delta;\Z/2\Z)$ denotes the cochain taking the face $f_{i}$ opposite vertex $i$ to $-1$ and all other faces to $1$. The cocycle $\sigma^3$ is indicated in Figure~\ref{fig:Trigm009}. 

One easily checks that $\sigma^i_k=\delta(\eta_k^i)$, where $\eta^i_k$ is given by
\begin{equation}
\begin{aligned}
\eta^1_0&=\varepsilon_{13}^*,\quad&\eta^1_1&=\varepsilon_{23}^*,\quad&\eta^1_2&=0,\\
\eta^2_0&=\varepsilon_{23}^*,\quad&\eta^2_1&=\varepsilon_{12}^*,\quad&\eta^2_2&=\varepsilon_{23}^*,\\
\eta^3_0&=\varepsilon_{03}^*,\quad&\eta^3_1&=\varepsilon_{02}^*,\quad&\eta^3_2&=\varepsilon_{23}^*,
\end{aligned}
\end{equation}
where $\varepsilon_{ij}^*\in C^1(\Delta;\Z/2\Z)$ is the cochain taking $\varepsilon_{ij}$ to $-1$ and all other edges to $1$.
The Ptolemy variety for the trivial obstruction class is equal to $\overline P(\T)$, which is trivial, as we saw earlier.
Recall that the reduced Ptolemy variety is obtained by setting the Ptolemy coordinate $z$ of edge 3 equal to 1.
\subsubsection{Ptolemy variety for $\sigma^1$}
The identification relations~\eqref{eq:EdgeIdentificationsSigma} are
\begin{equation}
\begin{gathered}
c_{23,0}=-c_{23,2}=-c_{01,1}=c_{01,2}=-c_{01,0}=c_{23,1}\\
c_{13,0}=-c_{12,1}=c_{13,2}=c_{03,1}=c_{13,0}\\
c_{13,1}=c_{03,2}=c_{02,0}=c_{12,2}=-c_{02,1}=c_{03,0}=c_{02,2}=c_{12,0}.
\end{gathered}
\end{equation}
Again, letting $x=c_{23,0}$, $y=c_{13,0}$ and $z=c_{13,1}$, the Ptolemy relations become
\begin{equation}\label{eq:m009PtolemyPSL1}
z^2-x^2=zy,\qquad -y^2-x^2=-z^2,\qquad z^2-x^2=zy.
\end{equation}
Setting $z=1$, the equations then have the three solutions
\begin{equation}
(x,y,z)=(0,1,1),\qquad (x,y,z)=(-1,0,1),\qquad (x,y,z)=(1,0,1).
\end{equation}
However, not all of these solutions are valid, since the edge equations may not be satisfied. The edge relation~\eqref{eq:EdgeOneRelation} around edge 1 (defined when $y$ and $z$ are nonzero) is
\begin{equation}
\frac{x}{-zy}+\frac{-x}{y(-z)}+\frac{x}{z(-z)}+\frac{x}{z(-y)}+\frac{x}{yz}+\frac{x}{-zz}=\frac{-2x}{z^2}=0
\end{equation}
which is satisfied when $x$ is zero. Similarly, the edge relation~\eqref{eq:EdgeTwoRelation} around edge 2 (defined when $z$ and $x$ are nonzero) is
\begin{equation}
\frac{z}{-xz}+\frac{-y}{-zx}+\frac{-z}{-x(-z)}+\frac{-y}{z(-x)}=\frac{2y}{zx}-\frac{2}{x}=0,
\end{equation}
which is not satisfied. Hence, the reduced Ptolemy variety for $\sigma^1$ consists of a single point given by $(x,y,z)=(0,1,1)$.

\subsubsection{Ptolemy variety for $\sigma^2$}
The Ptolemy relations are
\begin{equation}
z^2+x^2=yz,\qquad y^2+x^2=-z^2,\qquad z^2+x^2=-yz,
\end{equation}
which have no non-trivial solution. Hence, the Ptolemy variety $\overline P^{\sigma^2}(\T)$ is empty.
\subsubsection{Ptolemy variety for $\sigma^3$}
The Ptolemy relations are
\begin{equation}
z^2+x^2=yz,\qquad y^2+x^2=-z^2,\qquad z^2+x^2=-yz,
\end{equation} 
and setting $z=1$ they are equivalent to
\begin{equation}
 x^2 + y + 1=0,\qquad y^2 + y + 2=0,\qquad z=1
\end{equation}
Hence, there are no solutions with $x$ or $y$ being $0$, and $\overline P^{\sigma^3}(\T)_{\red}$ is defined over the number field $\Q(w)$, where $w^4+w^2+2=0$, and is given by $x=w$, $y=-w^2-1$, $z=1$.

\subsection{The Ptolemy variety $\E\overline P(\T)$ and the $A$-polynomial}\label{sec:Apolyexample}
A fundamental rectangle for the boundary of $M$ is shown in Figure~\ref{fig:FundamentalRectangle}. Using the rules illustrated in Figure~\ref{fig:FundRectIdentifications} we obtain that the identification relations are given by

\begin{figure}[htb]
\scalebox{0.7}{\input{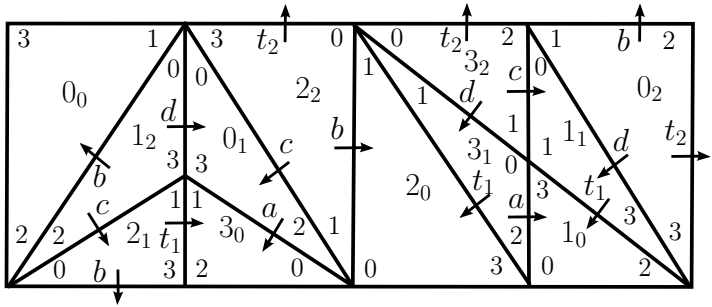}}
\caption{Fundamental rectangle for \texttt{m009}.}\label{fig:FundamentalRectangle}
\end{figure}

\begin{equation}
\begin{gathered}
c_{23,0}\overset{t_2}{=}-m^2c_{23,2}\overset{c}{=}-m^2c_{01,1}\overset{d}{=}m^2c_{01,2}\overset{b}{=}-mc_{01,0}\overset{a}{=}-c_{23,1},\\
c_{13,0}\overset{t_1}{=}c_{12,1}\overset{c}{=}-c_{13,2}\overset{d}{=}-c_{03,1},\\
c_{13,1}\overset{d}{=}c_{03,2}\overset{t_2}{=}m^{-1}lc_{02,0}\overset{b}{=}m^{-1}lc_{12,2}\overset{c}{=}-m^{-1}lc_{02,1}\overset{a}{=}lc_{03,0}\overset{t_2}{=}mc_{02,2}\overset{b}{=}c_{12,0},
\end{gathered}
\end{equation}
where the symbol $\overset{\alpha}{=}$ indicates that the identification is via the face pairing $\alpha$. Hence, the Ptolemy relations become
\begin{equation}\label{eq:ExmlPtolemy}
mz^2-lx^2-m^2yz,\qquad m^2ly^2-lx^2-m^3z^2,\qquad m^5z^2-lx^2+m^3lyz.
\end{equation}
One easily checks that there are no non-trivial solutions with $x=0$. The edge relation~\eqref{eq:EdgeEquationsEnhanced} around edge 2 is 
\begin{equation}
\frac{ml^{-1}z}{-m^{-1}xz}+\frac{y}{(-z)m^{-2}x}+\frac{-m^{-1}z}{-m^{-2}x(-z)}+\frac{y}{zx}=0\iff m^2z+m^2ly+mlz-ly=0
\end{equation}
Adding this equation to the Ptolemy relations~\eqref{eq:ExmlPtolemy} and substituting $z=1$, one can check (using magma~\cite{Magma}) that the system is equivalent to
\begin{equation}
\begin{gathered}
        x^2 + yl - m^8 + 3m^6 + m^5l + m^4 - m^3l - 3m^2 - ml=0,\\
        y^2l + yl^2 + m^4l - m^3 - m^2l - ml^2 - m - l=0,\\
        ym + yl + m^4 - m^2 - ml - 1=0,\\
        yl^3 - yl - m^5l + m^4l^2 + m^3l + m^2 - ml^3 + 2ml - l^2=0,\\
        m^6l - 2m^4l - m^3l^2 - m^3 - 2m^2l + l=0.
\end{gathered}
\end{equation}
This shows that the $A$-polynomial of \texttt{m009} is given by
\begin{equation}
A(m,l)=m^6l - 2m^4l - m^3l^2 - m^3 - 2m^2l + l.
\end{equation}
It also follows that $\E\overline P(\T)$ is a branched cover over the $A$-polynomial curve of degree 2. Another magma computation shows that it is given explicitly by
\begin{equation}\label{eq:EPOverA}
x^2=-\frac{-m^4-2m^3l+ml}{l^2(m^2-1)},\qquad y=-\frac{m^2+ml}{m^2l - l},\qquad z=1,\qquad A(m,l)=0.
\end{equation}
Note that $y$ and $x^2$ are regular functions on the $A$-polymial curve.
\subsection{Recovering the representations}\label{sec:RecoverRepExample}
The dual triangulation of \texttt{m009} has an oriented edge for each of the face pairings $a$, $b$, $c$, $d$, $t_1$, and $t_2$. The edges $t_1$ and $t_2$ form a maximal tree, so the fundamental group of \texttt{m009} is generated by $a$, $b$, $c$, and $d$. By inspecting Figure~\ref{fig:FundamentalRectangle} we see that
\begin{equation}
\pi_1(M)=\langle a,b,c,d\bigm \vert cd^{-1}a^{-1},cb^{-1}d^{-1}ba,ca^{-1}bd^{-1}\rangle,
\end{equation}
and that the meridian $\mu$ and longitude $\lambda$ are given by
\begin{equation}
\mu=acb^{-1},\qquad \lambda=d^{-1}cd^{-1}bc^{-1}db^{-1}.
\end{equation}
One also checks that the generators may be represented by edge paths in the truncated complex as follows:
\begin{equation}
\begin{aligned}
a&=\beta^0_{23,0}\alpha_{03,0}\beta^0_{23,1}\alpha_{03,1}\beta^3_{01,1}(\beta^2_{01,0})^{-1}\alpha_{02,0}^{-1}\\
b&=\alpha_{02,0}\beta^2_{01,0}\alpha_{12,0}^{-1}(\beta^0_{23,0})^{-1}\\
c&=\beta^0_{23,0}\alpha_{03,0}\beta^3_{01,0}\alpha_{12,1}^{-1}(\beta^3_{01,2})^{-1}\alpha_{03,2}^{-1}\\
d&=\alpha_{02,0}\beta^2_{01,0}\alpha_{12,0}^{-1}.
\end{aligned}
\end{equation}
One can then compute the representations explicitly using the formulas for $\alpha_{ij}$ and $\beta^k_{ij}$ given by Proposition~\ref{prop:ConcreteFormula}.

\subsubsection{The representation with obstruction class $\sigma^1$}\label{sub:Reducible}
We obtain
\begin{equation}\label{eq:ReducedRep}
a=c=\begin{pmatrix}0&-1\\1&0\end{pmatrix},\qquad b=d=I,\qquad\mu=-\lambda=I.
\end{equation}
This shows that the Ptolemy variety can detect reducible representations. It is boundary-degenerate as it must be by Proposition~\ref{prop:ReducibleDegenerate}.

\subsubsection{The representations with obstruction class $\sigma^3$}
We obtain
\begin{equation}
\begin{gathered}
a=\begin{pmatrix}w^3+w&1\\1&-w\end{pmatrix},\qquad b=\begin{pmatrix}1&-w\\-w&w^2+1\end{pmatrix},\qquad c=\begin{pmatrix}w^3&1\\w^2+1&-w\end{pmatrix}\\
d=\begin{pmatrix}1&0\\-w&1\end{pmatrix},\qquad \mu=\begin{pmatrix}1&w\\0&1\end{pmatrix},\qquad \lambda=\begin{pmatrix}-1&2w^3+w\\0&-1\end{pmatrix}.
\end{gathered}
\end{equation}
One easily checks that $H^1(\widehat M;\Z/2\Z)=\Z/2\Z$. Hence, by Theorem~\ref{thm:MainThmPSL} there should be only two $\PSL(2,\C)$ representations, not four. Indeed, replacing $w$ by its Galois conjugate $-w$ corresponds to conjugating the representation by the diagonal matrix with entries $\sqrt{-1}$ and $-\sqrt{-1}$. Note that the fixed field of the Galois isomorphism $w\mapsto -w$ is $\Q(\sqrt{-7})$, which is the shape field of \texttt{m009}. The two representations detected are the geometric representation and its complex conjugate.

\subsubsection{The non-boundary-unipotent representations}\label{sub:TautologicalRep}
Representing the generators by edge paths in the fattened truncated complex, we have
\begin{equation}
\begin{aligned}
a&=\beta^0_{23,0}\alpha_{03,0}\beta^0_{23,1}\alpha_{03,1}\beta^3_{01,1}(\beta^2_{01,0})^{-1}\alpha_{02,0}^{-1}\\
b&=\alpha_{02,0}\beta^2_{01,0}\alpha_{12,0}^{-1}M^{-1}L^{-1}(\beta^0_{23,0})^{-1}\\
c&=\beta^0_{23,0}\alpha_{03,0}\beta^3_{01,0}\alpha_{12,1}^{-1}(\beta^3_{01,2})^{-1}\alpha_{03,2}^{-1}L^{-1}\\
d&=\alpha_{02,0}\beta^2_{01,0}\alpha_{12,0}^{-1}L^{-1},
\end{aligned}
\end{equation}
where $M=\Matrix{m}{0}{0}{m^{-1}}$ and $L=\Matrix{l}{0}{0}{l^{-1}}$. Using this, we obtain
\begin{equation}
\begin{gathered}
a=\begin{pmatrix}\frac{-(m+l)x}{m^4-m^2}&\frac{l^2}{m^2}\\\frac{-1}{ml}&\frac{-lx}{m^2}\end{pmatrix},\qquad b=\begin{pmatrix}\frac{1}{m^2}&\frac{l^2x}{m^3}\\\frac{-x}{m^2l}&\frac{m^2+ml}{m^2-1}\end{pmatrix},\qquad c=\begin{pmatrix}\frac{-(ml+1)x}{m^4-m^2}&\frac{l^2}{m}\\\frac{m+l}{l^2(m^2-1)}&\frac{-lx}{m}\end{pmatrix}\\
d=\begin{pmatrix}m^{-1}&0\\\frac{-x}{ml}&m\end{pmatrix},\qquad \mu=\begin{pmatrix}m&\frac{-l^2x}{m^2}\\0&m^{-1}\end{pmatrix},\qquad \lambda=\begin{pmatrix}l&\frac{(-l^3+l)x}{m^3-m}\\0&l^{-1}\end{pmatrix},
\end{gathered}
\end{equation}
where $x$ is given by~\eqref{eq:EPOverA}.

\begin{remark}\label{rm:EnhancedPSLExample}
As mentioned in Section~\ref{sec:PtolemyBrief} one also has a Ptolemy variety $\E\overline P(\T)^\sigma_{\red}$. This variant only depends on the image of $\sigma$ in the cokernel of the canonical map $H^1(\partial M;\Z/2\Z)\to H^2(M,\partial M;\Z/2\Z)$. For \texttt{m009} this cokernel is $\Z/2\Z$, and $\E\overline P(\T)^\sigma_{\red}$ for the generator turns out to have two additional components giving rise to curves of $\PSL(2,\C)$-representations that don't lift to $\SL(2,\C)$ (and therefore not detected by $\E\overline P(\T)_{\red}$). One of these components is a component of dihedral representations that are deformations of the representation~\eqref{eq:ReducedRep}. 
\end{remark}

\subsection*{Acknowledgment}
We thank Stavros Garoufalidis, Walter Neumann and Henry Segerman for helpful discussions.

\bibliographystyle{plain}
\bibliography{BibFile}

\end{document}